\documentclass[11pt]{article}

\usepackage[english]{babel}
\usepackage[letterpaper,top=2cm,bottom=2cm,left=3cm,right=3cm,marginparwidth=1.75cm]{geometry}

\usepackage{amsmath,amssymb,amsfonts,mathrsfs}
\usepackage{amsthm}
\usepackage{bm}
\usepackage{graphicx}
\usepackage{enumerate}
\usepackage{hyperref}
\usepackage{mdframed}
\usepackage{xcolor}
\usepackage[titletoc,title]{appendix}

\numberwithin{equation}{section}

\newtheorem{theorem}{Theorem}[section]
\newtheorem{proposition}[theorem]{Proposition}
\newtheorem{lemma}[theorem]{Lemma}

\theoremstyle{definition}

\newtheorem{remark}[theorem]{Remark}

{\begin{mdframed}[linewidth=0.8pt]\noindent\textbf{#1.}}%
{\end{mdframed}}

\title{A Time-Domain Pressure–Interface Model for Gas Bubble Dynamics with Surface Tension: Well-Posedness, Classical Limits, and Resonance Branches}

\author{Long Li \thanks{{RICAM, Austrian Academy of Sciences, A-4040, Linz, Austria (long.li@ricam.oeaw.ac.at)}} \; and Mourad Sini \thanks{{RICAM, Austrian Academy of Sciences, A-4040, Linz, Austria (mourad.sini@oeaw.ac.at)}}}

\begin{document}

\maketitle
\begin{abstract}
\noindent We derive and analyze a time-domain pressure--interface evolution model for a
compressible gas bubble in a compressible liquid with surface tension. Starting
from the nonlinear two-phase Euler free-boundary problem and linearizing about
a spherical Young--Laplace equilibrium, we obtain a coupled bulk--surface
hyperbolic system for the liquid pressure $p_\ell$, the gas pressure $p_g$,
and the normal displacement $\eta$ of the interface:
\begin{align*}
\frac{1}{K_\alpha}\partial_t^2 p_\alpha
-\nabla\cdot\left(\frac{1}{\rho_\alpha}\nabla p_\alpha\right)=0,
\qquad \alpha\in\{\ell,g\},
\end{align*}
together with
\begin{align*}
\frac{1}{\rho_\ell}\partial_n p_\ell
=
\frac{1}{\rho_g}\partial_n p_g
=
-\partial_t^2\eta,
\qquad
p_\ell-p_g
=
\sigma\left(\Delta_{S^{(0)}}+\frac{2}{R_0^2}\right)\eta
\quad\text{on }S^{(0)} .
\end{align*}

\noindent The time-domain analysis is complicated by the fact that the surface-tension
quadratic form is indefinite: the \(Y_0^0\) component is the
volume-changing breathing mode, the \(Y_1^m (m = \{-1,0,1\})\) components are neutral
translations, and only the higher spherical harmonics give a coercive
shape-mode energy. We exploit this decomposition. The coercive sector is
treated by a \(C^0\)-semigroup argument, while the breathing mode is analyzed
separately by Fourier--Laplace methods. This yields well-posedness for
admissible finite-energy data and classical solutions under the natural
compatibility conditions.
\medskip

\noindent We then justify two limiting descriptions with quantitative error estimates.
In an acoustic quasi-static regime, the model reduces to the linearized
Rayleigh--Plesset equation for the breathing mode and to the linearized
Rayleigh--Lamb equations for the shape modes. In a different regime, it reduces
to the frozen-interface acoustic transmission model. Finally, a
frequency-domain analysis identifies the corresponding Minnaert,
Rayleigh--Lamb, and Fabry--P\'erot-type resonance mechanisms as different
components and limits of the same pressure--interface formulation.
\end{abstract}

\section{Introduction}
\subsection{Motivation}
Gas bubbles in liquids are fundamental objects in acoustics, fluid mechanics,
ultrasound imaging, bubbly media, and acoustic metamaterials \cite{AinslieLeighton2011,ADHO, Leighton1994,Leighton2004Acoustics,vanWijngaarden1972}. Their importance
comes from the fact that even a single bubble exhibits several distinct
resonance mechanisms. The Minnaert resonance is a volume-changing breathing
oscillation driven by gas compressibility and liquid added mass \cite{Minnaert}. The
Rayleigh--Lamb modes are zero-volume interfacial shape oscillations for which
surface tension is the leading restoring mechanism \cite{Lamb1932,Prosperetti1980FreeOsc}. At higher frequencies, the
gas domain can also support interior acoustic, i.e., Fabry--P\'erot-type modes \cite{P_1}.
These three mechanisms are often studied using different reduced models and
different asymptotic regimes:

\begin{enumerate}
\item One route starts from the free-boundary motion of a bubble
\cite{BV_00,CTW_13,LW_23,LW_25,SW_11}.
In the radial setting this viewpoint leads to Rayleigh--Plesset-type
equations
\cite{CommanderProsperetti1989, PlessetProsperetti1977,Prosperetti1982GenRP,Prosperetti1974Nonlinear},
whereas nonspherical perturbations lead to Rayleigh--Lamb shape oscillations
\cite{SW_11}. These models capture the
motion of the interface and the role of surface tension in shape dynamics.
However, in many reduced descriptions the gas dynamics is closed by an
isobaric or spatially uniform thermodynamic law, rather than by retaining the
full acoustic wave field inside the gas bubble.

\item A different route is provided by high-contrast acoustic transmission and scattering models
on fixed interfaces \cite{CaflischMiksisPapanicolaouTing1985,CaflischMiksisPapanicolaouTing1986,Miksis1991}. These models retain wave propagation in the gas and liquid domains and have been very successful in the analysis of Minnaert-type
subwavelength resonances \cite{Ammari-1} and interior acoustic modes, Fabry-P\'erot type (high-frequency) resonances, \cite{LiSiniFabryPerot}. However, by freezing the
bubble boundary, they suppress the interface displacement as a dynamical
unknown and therefore do not directly describe the free-boundary shape
dynamics generated by surface tension.
\end{enumerate}

\noindent One of the purposes of this paper is to bridge these two viewpoints at the linearized
time-domain level. We derive a pressure--interface formulation in which the
liquid and gas pressures satisfy acoustic wave equations in the bulk, while
the interface displacement is retained as an independent dynamical unknown.
Thus the model keeps both ingredients that are usually separated: the acoustic
wave field inside the bubble and the motion of the free boundary. As shown later, such models, which are closer to reality as compared to the frozen one, generate, in particular, richer resonance mechanisms including material contrasting based ones (as Minnaert and Fabry-P\'erot) and shape based ones (i.e. Rayleigh–Lamb ones).
\medskip

\noindent Another motivation for studying the present pressure--interface model is
 driven by its potential to provide a rigorous microscopic foundation for
several recent developments in imaging, wave control, and material science
based on resonant bubbles. In a sequence of works, resonant bubbles have been
successfully exploited as contrast agents for inverse problems, enabling the
recovery of material parameters through their subwavelength resonances and the
design of imaging modalities with enhanced sensitivity \cite{DabrowskiGhandricheSini2021,  Senapati-Sini}. Parallel developments
have established point-interaction models for
collections of resonating bubbles, as well as dispersive effective media and
metasurfaces generated by suitably distributed bubble assemblies \cite{HDAM-1, Ammari-2, Mukherjee-Sini-1}. More
recently, these ideas have found
applications in resonator-assisted control and acoustic cavitation \cite{Mukherjee-Sini-3}. A common feature of these contributions is that each bubble is represented by
an effective resonant degree of freedom, typically associated with its
dominant Minnaert mode. While this approximation has proved remarkably
successful, it does not explicitly account for the detailed coupling between
the acoustic pressure and the evolution of the fluid interface. The present
pressure--surface formulation fills this gap by retaining the full interface
dynamics together with the acoustic fields in both phases. As such, it
provides a natural PDE framework from which reduced Foldy--Lax systems,
effective medium theories, and dispersive models may be derived while
systematically incorporating capillary effects, higher-order surface modes,
and cavity resonances. This perspective opens several promising directions. In imaging with
resonant contrast agents, it suggests the possibility of exploiting not only
the Minnaert resonance but also shape and cavity resonances to enhance
spectral resolution and parameter reconstruction. In resonator-based control
and stabilization of wave equations, the additional interface degrees of
freedom may furnish new mechanisms for frequency-selective actuation and
energy transfer. Finally, in acoustic metamaterials and metasurfaces, the
present model provides a mathematically consistent route toward effective
constitutive laws in which dispersion arises from the combined influence of
breathing, capillary, and Fabry--P\'erot resonances rather than from a single
effective oscillator.
\medskip

\noindent Consequently, the present work should be viewed not only as a unified theory
of linear bubble dynamics, but also as a foundational step toward extending
recent mathematical developments on resonant imaging, effective media, and resonator-assisted wave manipulation and control to
settings where surface tension and interface dynamics play a fundamental role.

\bigskip

\subsection{The results}
\noindent\textbf{Mathematical setting.}
Let
\begin{align} \label{eq:equilibrium-domains} 
\Omega_\ell^{(0)}=:\mathbb R^3\setminus\overline{B^{(0)}},
\qquad
\Omega_g^{(0)}=:B^{(0)},
\qquad
S^{(0)}=:\partial B^{(0)}.
\end{align}
Here $B^{(0)}$ denotes the shperical equilibrium configuration of the gas
bubble, with centre at origin and radius
$R_{0}>0$, while $\Omega_\ell^{(0)}$ is the surrounding liquid domain. The liquid
has equilibrium density $\rho_\ell$ and bulk modulus $K_\ell$, and the gas
inside the bubble has equilibrium density $\rho_{g}$ and bulk
modulus $K_{g}$. The choice of a spherical equilibrium is not merely a simplifying assumption. For a static bubble with constant interior and exterior equilibrium pressures and constant surface tension, the Young-Laplace law gives
\begin{align*}
p_{g,0}-p_{\ell,0}=\sigma \kappa_0 .
\end{align*}
Thus the mean curvature $\kappa_0$ must be constant. In the closed embedded connected setting, this selects a round sphere. Hence the spherical equilibrium is not an additional geometric assumption, but a consequence of the modelling assumptions and the equilibrium equations themselves. It is the reference configuration about which we linearize.
\medskip

\noindent The central object of this work is the following linearized pressure--interface
system. The unknown $p$ denotes the acoustic pressure perturbation, restricted
to the liquid region $\Omega_\ell^{(0)}$ and to the gas bubble region $\Omega_g^{(0)}$.
The scalar function $\eta$ denotes the normal displacement of the
bubble interface from its equilibrium position. The pressure field satisfies
\begin{align}
\frac{1}{K_{\alpha}}p_{tt}
-\nabla\cdot\Big(\frac{1}{\rho_{\alpha}}\nabla p\Big) &= 0, 
\qquad \text{in }\Omega_\alpha^{(0)}\times(0,\infty), \qquad \alpha \in \{l,g\}.
\label{eq:bulk-wave-liquid-final}
\end{align}
The coupling on each equilibrium interface
\(S^{(0)}\) is given by
\begin{align}
\frac1{\rho_\ell}\partial_n p_\ell\big|
=
\frac1{\rho_{g}}\partial_n p_g\big|
&= -\partial^2_{t}\eta
&& \text{on }S^{(0)}\times(0,\infty),
\\
p_\ell - p_g
& = \sigma
\left(\Delta_{S^{(0)}}+\frac{2}{R_{0}^2}\right)\eta
&& \text{on}\;S^{(0)}\times(0,\infty).
\label{eq:jump-interface-final}
\end{align}
The first interface condition is the linearized kinematic condition, expressing
the continuity of the normal acceleration across the interface. The second one
is the linearized Young--Laplace law, where $\sigma>0$ is the surface tension
coefficient and $\Delta_{S^{(0)}}$ denotes the Laplace--Beltrami operator on $S^{(0)}$.

\medskip
\noindent\textbf{Main findings.}
The paper is organized around four quantitative results.
\begin{enumerate}[(1)]
\item 
The first result is the derivation of the pressure--interface model (\ref{eq:bulk-wave-liquid-final})-(\ref{eq:jump-interface-final}) from the
two-phase Euler free-boundary system with surface tension. The derivation is
performed around a spherical Young--Laplace equilibrium and keeps the full
normal displacement of the interface.

\item 
The second result is the time-domain well-posedness theory. After decomposing
the interface displacement into the breathing mode, the translation modes, and
the coercive shape-mode sector, we prove well-posedness of the coercive part by
a semigroup method (Theorem \ref{thm:wellposed-A}) and treat the breathing mode by Fourier--Laplace analysis (Theorem \ref{thm:wellposed-B}).
The combination yields finite-energy solutions and classical solutions under
the natural compatibility assumptions. Precisley, the surface operator
\begin{align*}
\Delta_{S^{(0)}}+\frac{2}{R_0^2}
\end{align*}
admits the orthogonal decomposition
\begin{align*}
L^2(S^{(0)})=\mathcal Y_0\oplus\mathcal Y_1\oplus\mathcal Y_{\ge2}.
\end{align*}
Here $\mathcal Y_0$ is spanned by $Y_0^0$, $\mathcal Y_1$ is spanned by the three
$l=1$ spherical harmonics, and $\mathcal Y_{\ge2}$ is the closed span of the higher modes.
These components correspond respectively to the breathing mode, rigid
translations, and higher-order shape oscillations. On $\mathcal Y_{\ge2}$ the associated energy
is coercive and the evolution generates a strongly continuous semigroup on the
natural energy space. The breathing mode is analyzed separately by
Fourier--Laplace methods, yielding existence, uniqueness and regularity for
the complete coupled pressure--interface problem.

\item 
The third result is the justification of two classical reductions. In the
acoustic quasi-static regime, the pressure--interface system reduces to the
linear Rayleigh--Plesset equation for the \(Y_0^0\) component and to the
Rayleigh--Lamb equations for the higher modes (Theorem \ref{th:reduct_1}). In a separate regime, the
moving-interface model reduces to the frozen-interface acoustic transmission
problem (Theorem \ref{prop:sigma-to-zero}). Both reductions are accompanied by error estimates:

\begin{enumerate}
\item[(i)]
For every spherical harmonic degree $l\ge2$, the dominant interface dynamics
are governed by the Rayleigh--Lamb equation
\[
\left(
\frac{\rho_\ell}{l+1}
+
\frac{\rho_g}{l}
\right)
\ddot\eta_l
-
\frac{\sigma(l+2)(l-1)}{R_0^3}\eta_l
=0,
\]
whose oscillation frequency is
\[
\omega_l
=
\sqrt{
\frac{(l+2)(l-1)\sigma}
{\left(
\frac{\rho_\ell}{l+1}
+
\frac{\rho_g}{l}
\right)R_0^3}
}.
\]

\item[(ii)]
The monopole component reduces to the linearized Rayleigh--Plesset equation,
recovering the classical Minnaert breathing dynamics. A key point is that the spatially uniform
gas-pressure law is derived, rather than imposed: it follows from a
well-prepared constant interior acoustic state together with an acoustic
quasi-static regime. We justify this reduction by quantitative finite-time
error estimates, thereby identifying the assumptions under which the linearized
Rayleigh--Plesset closure is valid within the pressure--interface wave model.

\item[(iii)]
As the surface tension tends to zero, the solutions converge to those of the
standard frozen-interface acoustic transmission problem, with quantitative
error estimates established in Sobolev spaces.
\end{enumerate}

\item 
Finally, we include a frequency-domain analysis of the associated scattering
problem (Theorem \ref{prop:resonance}). This identifies the resonance mechanisms corresponding to the
time-domain model: the Minnaert branch from the volume-changing sector, the
Rayleigh--Lamb branch from the shape-mode sector, and  Fabry--P\'erot-type branches from finite-wavelength gas oscillations: 

\begin{enumerate}

\item[(i)] \emph{Minnaert branch.}
The monopole ($l=0$) component gives rise to the breathing resonance. Its
resonance frequencies are perturbations of the classical Minnaert frequency
and satisfy
\begin{align*}
\omega_{M}^{\pm}
=
\pm
\sqrt{
\frac{3K_g}{\rho_\ell R_0^{2}}
-
\frac{2\sigma}{\rho_\ell R_0^{3}}
}
+
o(1).
\end{align*}
This leading-order behavior is governed by the effective radial coefficient
\begin{align*}
\Omega_M^2
:=
\frac{3K_g}{\rho_\ell R_0^{2}}
-
\frac{2\sigma}{\rho_\ell R_0^{3}} .
\end{align*}
When $\Omega_M^2>0$, the Minnaert poles form an oscillatory pair near
$\pm\Omega_M$. If instead $
3K_gR_0<2\sigma,$ then $\Omega_M^2<0$, and the leading-order pair becomes purely imaginary.
Although this regime is not the usual stable physical regime, it is
mathematically significant: with the \(e^{-i\omega t}\) convention, one of
the corresponding poles lies in the upper half-plane.

\item[(ii)] \emph{Rayleigh--Lamb branch.}
For every spherical harmonic degree $l\ge2$, the interface shape modes possess
a pair of resonances satisfying
\[
\omega_{L}^{(l),\pm}
=
\pm
\sqrt{
\frac{(l+2)(l-1)\sigma}
{
\left(
\frac{\rho_\ell}{l+1}
+
\frac{\rho_g}{l}
\right)
R_0^{3}
}
}
+
o(1),
\]
which coincide, at leading order, with the classical Rayleigh--Lamb
frequencies associated with capillary oscillations of degree $l$.

\item[(iii)] \emph{Fabry--P\'erot branch.}
Besides the low-frequency Minnaert and capillary branches, the coupled model
possesses an infinite family of interior acoustic resonances. These
frequencies correspond to standing waves inside the gas domain and satisfy the
interior Helmholtz problem
\[
\Delta u + \lambda^2 u =0
\qquad\text{in }B_{R_0},
\]
together with the limiting boundary condition induced by the interface
dynamics. In the asymptotic regime in which the interface becomes effectively
rigid (equivalently, when compressibility effects dominate the interface
motion), the corresponding resonance frequencies converge to the acoustic
eigenfrequencies of the cavity,
\[
\omega_{F,n}^{\pm}
=
\pm \lambda_n
+
o(1),
\qquad n=1,2,\ldots,
\]
where $\{\lambda_n\}$ denotes the Neumann spectrum of the ball
$B_{R_0}$. Thus the Fabry--P\'erot resonances are interpreted as the
high-frequency cavity branch of the same pressure--interface operator.

\end{enumerate}

Consequently, the proposed formulation unifies in a single mathematical model
three qualitatively different oscillatory mechanisms:
the low-frequency Minnaert breathing resonance, the capillary
Rayleigh--Lamb shape resonances, and the interior Fabry--P\'erot cavity
resonances. Their coexistence and interaction are recovered without changing
the governing equations, but only by analyzing different spectral sectors of
the same coupled pressure--interface system.
\medskip

\end{enumerate}

\noindent The novelty of the present work lies in the introduction and analysis of a unified
pressure--interface formulation that simultaneously retains the acoustic
fields in both phases and the full dynamics of the moving boundary. To the best of our knowledge, this is the first rigorous framework that reconciles the classical free-boundary descriptions of Rayleigh--Plesset and
Rayleigh--Lamb type with the pressure-based formulations underlying subwavelength Minnaert resonances and acoustic cavity modes. On the analytical side, the model gives rise to an indefinite interface operator whose spectral
structure combines a negative breathing mode, a translational kernel, and a coercive higher-order sector. The corresponding well-posedness theory requires a combination of semigroup methods and Fourier--Laplace techniques, leading to a unified treatment of these different regimes. Beyond its intrinsic mathematical interest, the proposed formulation provides a natural microscopic foundation for effective theories of resonant bubble assemblies. In particular, it creates the possibility of systematically incorporating surface-tension effects into Foldy--Lax approximations, homogenized media, and resonator-based models for imaging and wave control, thereby establishing a bridge between free-boundary fluid mechanics and
modern theories of resonant wave propagation.

\bigskip
\noindent Extending the analysis above to non-spherical equilibria would require introducing additional physical mechanisms that modify the equilibrium balance and, after linearization, affect not only the curvature operator but the entire coupled pressure–interface system.

\medskip
\noindent\textbf{Structure of the paper.}
The paper is organized as follows. Section~\ref{sec:model-linearization} introduces the nonlinear two-phase model, derives the linearized pressure--interface formulation, and introduces the
spherical harmonic notation. Section~\ref{sec:wellposedness} proves the
well-posedness of the linearized system: the \(Y_0^0\) sector is treated by a
weighted Fourier--Laplace argument, while the orthogonal component is handled
by an energy-based semigroup method. Section~\ref{sec:reduced-models} discusses asymptotic reductions to the frozen-interface and linearized Rayleigh--Plesset-type models. Section~\ref{sec:resonance-connection} analyzes the resonance regimes and recovers the Minnaert, Lamb and Fabry--P\'erot branches. Finally, in the appendix we collect the proofs of several technical lemmas and auxiliary estimates used in the proofs of the main theorems.

\section{Model derivation and linearization}
\label{sec:model-linearization}

\subsection{Nonlinear two-phase model in a heterogeneous background}

\paragraph{Geometry and phases.}

Let  $\mathcal B(t)\subset\mathbb R^3$ denote the domain
occupied by the gas bubble at time $t$, and let
$\Gamma(t):=\partial \mathcal B(t)$ be its interface with the surrounding liquid.
The surrounding liquid occupies the unbounded exterior domain
$
\Omega_\ell(t):=\mathbb{R}^3\setminus
\overline{\mathcal B(t)}.
\notag
$
On the interface $\Gamma(t)$, we denote by $\nu(t)$ the unit normal
pointing from the gas bubble into the liquid.

The liquid variables are denoted by
$\varrho_\ell(x,t),\; v_\ell(x,t)$ and $P_\ell(x,t)
$
representing density, velocity, and pressure, respectively. Inside the
bubble, the gas variables are denoted by
$
\varrho_{g}(x,t),\; v_{g}(x,t),$ and $P_{g}(x,t).
$

\paragraph{Governing equations in each phase.}

We consider inviscid, compressible, barotropic flow in each phase.

In the liquid domain $\Omega_\ell(t)$, mass and momentum conservation give
\begin{align}
  \partial_t \varrho_\ell + \nabla\cdot(\varrho_\ell v_\ell) &= 0,
    && x \in \Omega_\ell(t),\ t>0,
    \label{eq:mass-liquid}\\
  \varrho_\ell\Bigl(\partial_t v_\ell + (v_\ell\cdot\nabla) v_\ell \Bigr)
   + \nabla P_\ell &= 0,
    && x \in \Omega_\ell(t),\ t>0.
\end{align}
The liquid is assumed to be barotropic in the sense that its pressure is
determined by the local density through the equation of state
$P_\ell= {P_\ell(\varrho_\ell)}.$ 

Inside the bubble $B(t)$, mass and momentum conservation give
\begin{align}
  \partial_t \varrho_{g} + \nabla\cdot(\varrho_{g} v_{g})
    &= 0,
    && x \in \mathcal B(t),\ t>0,\\
  \varrho_{g}\Bigl(\partial_t v_{g} + (v_{g}\cdot\nabla) v_{g} \Bigr)
   + \nabla P_{g}
    &= 0,
    && x \in \mathcal B(t),\ t>0.
\end{align}
The gas in the bubble is also assumed to be barotropic, with
\begin{align*}
  P_{g} = P_{g}(\varrho_{g}).
\end{align*}

\paragraph{Interface conditions.}

On the moving interface $\Gamma(t)$, we impose the kinematic condition
\begin{align} \label{eq:kinematic-full}
  V_{\Gamma}=v_\ell\cdot \nu=v_{g}\cdot \nu
  \qquad\text{on }\Gamma(t).
\end{align}
Here, $V_{\Gamma}$ denotes the normal velocity of the interface in the
direction $\nu$.

The dynamic condition is the Young--Laplace law. With the convention, we define the mean curvature of $\Gamma(t)$ by
\begin{align*}
\kappa:=\nabla_{\Gamma(t)}\cdot \nu,
\end{align*}
and then we write
\begin{align} \label{eq:dynamic-full}
  P_{g}-P_\ell= {\sigma\kappa}
  \qquad\text{on }\Gamma(t).
\end{align}

\paragraph{Equilibrium state and perturbation variables.}

We consider small perturbations around a homogeneous equilibrium state. In the
liquid phase, the equilibrium velocity is zero, and the equilibrium density
and pressure are constants
\begin{align*}
  v_\ell^{(0)}=0,\qquad
  \rho_\ell>0,\qquad
  p_\ell = {P_\ell(\rho_\ell)}.
\end{align*}

Similarly, in the gas bubble, the equilibrium velocity is zero, and the
equilibrium density and pressure are constants
\begin{align*}
  v_{g}^{(0)}=0,\qquad
  \rho_{g}>0,\qquad
  p_{g}= {P_{g}(\rho_{g})}.
\end{align*}
Moreover, we assume the existence of a spherical equilibrium configuration, with the $j$- th equilibrium bubble domain $B_j$ and the equilibrium liquid
domain $\Omega_\ell^{(0)}$, defined in \eqref{eq:equilibrium-domains}. This
assumption is natural in the present setting: if the equilibrium pressures are
constant in each phase, then the Young--Laplace law gives
\begin{align*}
  p_{g} - p_\ell
  =
  {\sigma \kappa^{\mathrm{eq}}}
  \qquad \text{on } \partial \mathcal B .
\end{align*}
Thus the equilibrium interface has constant mean curvature. In particular, a
sphere provides the canonical equilibrium shape, for which
\begin{align*}
  \kappa^{\mathrm{eq}}=\frac{2}{R_{0}}
\qquad\mathrm{and}\qquad 
  p_{g} - p_\ell
  =  {\frac{2\sigma}{R_{0}}}.
\end{align*}

\begin{remark}[Example of a gas equation of state]
A standard example is the polytropic equation of state
\begin{align}
  P_{g} = A\varrho_{g}^{\gamma},\qquad \gamma>1,
  \label{eq:polytropic-EOS}
\end{align}
where $\gamma$ is the effective polytropic exponent. The constant
$A>0$ is chosen so that \eqref{eq:polytropic-EOS} matches the equilibrium
state $(\rho_{g},p_{g})$, i.e.
$A = {p_{g}}/\rho^\gamma_{g}$.
Such polytropic laws are routinely used in the modelling of acoustically
driven bubbles, with $\gamma$ between the isothermal value $\gamma=1$ and
the adiabatic value $\gamma=c_p/c_g$, depending on the frequency regime and
the thermal properties of the gas and surrounding liquid
\cite{Brennen1995, PlessetProsperetti1977}.
\end{remark}

\subsection{Linearization of the bulk equations and moving interface}

We now state the linearization in a condensed proposition-style form and keep only the structural first-order terms needed later in the paper.

We introduce a small parameter $0<\varepsilon\ll 1$ and perturb the equilibrium on the fixed reference geometry. Routine Taylor expansions are omitted from the main line of the argument; only the resulting $O(\varepsilon)$ identities are recorded.
For the bulk fields, we use the ansatz
\begin{align}
v_\alpha(x,t) & = \varepsilon u_\alpha(x,t) + O(\varepsilon^2),
\label{eq:exp-vl}\\
\varrho_\alpha(x,t) & = \rho_{\alpha} + \varepsilon \rho^{(1)}_{\alpha}(x,t) + O(\varepsilon^2),\\
P_\alpha(x,t) &= p_{\alpha} + \varepsilon p^{(1)}_{\alpha}(x,t) + O(\varepsilon^2), \qquad \alpha \in \{\ell,g\}.
\label{eq:exp-pl}
\end{align}

The moving interface can be represented as a graph over the equilibrium sphere:
\begin{align} \label{eq:normal-displacement}
X(n,t)
=
\varepsilon \xi(t)
+
\bigl(R_{0}+\varepsilon h(n,t)\bigr) n.
\end{align}
where $n$ is the outward unit normal direction to the equilibrium
sphere, $\xi(t)\in\mathbb R^3$ represents the translational displacement of the
bubble centre, while $h$ represents the radial deformation of
the interface. Since the degree-one spherical harmonics correspond to infinitesimal
translations of the sphere, we separate translations from shape
deformations by imposing the gauge condition
\begin{align} \label{eq:gauge}
  \left\langle h, Y^{l}_{1,0} \right\rangle_{L^2\left(S^{(0)}\right)}=0, \quad l \in \{-1,0,1\}.
\end{align}
Here, 
\begin{align*}
  Y_{1,0}^l(x)
  :=
  Y_1^l\left(\frac{x}{R_{0}}\right),
  \qquad x\in S^{(0)},
\end{align*}
where $Y_1^l$ denotes the usual degree-one spherical harmonic on $\mathbb S^2$. Throughout this paper, for a Hilbert space $X$, we denote its
inner product by $\langle \cdot,\cdot\rangle_X$.
The above condition \eqref{eq:gauge} removes the translational component from $h$: translations
are represented by $\xi(t)$, whereas $h$ contains only
shape-deformation modes. For later use, it is convenient to introduce the
combined scalar normal displacement
\begin{align*}
  \eta(n,t)
  :=
  \xi(t)\cdot n + h( n,t).
\end{align*}
The term $\xi(t)\cdot n$ is a degree-one spherical harmonic and
represents the normal component of the centre translation. The gauge condition \eqref{eq:gauge}
therefore separates translations from genuine shape modes and makes the decomposition of $\eta$ unique.

\begin{proposition}[First-order linearization about the equilibrium]
\label{prop:first-order-linearization}
Under the expansion \eqref{eq:exp-vl}, and the representation \eqref{eq:normal-displacement}, the nonlinear two-phase system \eqref{eq:mass-liquid}--\eqref{eq:dynamic-full} reduces at order $O(\varepsilon)$ to the following equations on the equilibrium domains.

\smallskip
\noindent\emph{(i) Bulk equations }
The first-order bulk fields satisfy
\begin{align}
  \rho_{\alpha}\,\partial_t u_\alpha(x,t) + \nabla p^{(1)}_{\alpha}(x,t) &= 0,
  && x\in\Omega_\alpha^{(0)},\ t>0, \notag \\
  \frac{1}{K_\alpha}\,\partial_t p^{(1)}_{\alpha}(x,t) + \nabla\cdot u_\alpha(x,t) &= 0,
  && x\in\Omega_\alpha^{(0)},\ t>0\notag 
\end{align}
with 
\begin{align*}
 K_{\alpha} = \rho_{\alpha} P'_{\alpha}(\rho_{\alpha}), \qquad \alpha \in \{\ell,g\}.
\end{align*}


\smallskip
\noindent\emph{(iii) Interface conditions on $S^{(0)}$.}
The kinematic condition becomes
\begin{align}
  \partial_t \eta(\theta,\varphi,t)
   = u_\ell(x,t)\cdot n(x)
   = u_{g}(x,t)\cdot n(x),
  \quad x\in S^{(0)},\ t>0.
  \label{eq:lin-kinematic}
\end{align}
The dynamic condition reduces to
\begin{align}
  p^{(1)}_{\ell}(x,t) - p^{(1)}_{g}(x,t)
   = \sigma \left(\Delta_{S^{(0)}}+\frac{2}{R_{0}^2}\right)\eta(x,t),
  \quad x\in S^{(0)},\ t>0.
  \label{eq:lin-dynamic}
\end{align}
\end{proposition}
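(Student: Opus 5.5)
The plan is to substitute the ansatz \eqref{eq:exp-vl}--\eqref{eq:exp-pl} and the interface parametrization \eqref{eq:normal-displacement} into \eqref{eq:mass-liquid}--\eqref{eq:dynamic-full}. Terms of order $O(1)$ will cancel by the equilibrium relations, and we then collect the $O(\varepsilon)$ terms.

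\emph{Bulk equations.} In the momentum equation the convective term $(v_\alpha\cdot\nabla)v_\alpha$ is $O(\varepsilon^2)$, and $\varrho_\alpha\partial_t v_\alpha=\varepsilon\rho_\alpha\partial_t u_\alpha+O(\varepsilon^2)$. Since $\nabla p_\alpha=0$ for the constant equilibrium pressure, the first identity follows. In the mass equation, $\partial_t\varrho_\alpha=\varepsilon\partial_t\rho^{(1)}_\alpha$ and $\nabla\cdot(\varrho_\alpha v_\alpha)=\varepsilon\rho_\alpha\nabla\cdot u_\alpha+O(\varepsilon^2)$. The barotropic law gives $p^{(1)}_\alpha=P_\alpha'(\rho_\alpha)\rho^{(1)}_\alpha$. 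Eliminating $\rho^{(1)}_\alpha$ yields $\frac{1}{\rho_\alpha P'_\alpha(\rho_\alpha)}\partial_t p^{(1)}_\alpha+\nabla\cdot u_\alpha=0$, which is the second identity with $K_\alpha=\rho_\alpha P_\alpha'(\rho_\alpha)$. The equations hold on $\Omega_\alpha(t)$, whereas we want them on $\Omega_\alpha^{(0)}$. The two domains differ by an $O(\varepsilon)$ layer, and extending the fields smoothly across it changes the identities only at order $O(\varepsilon^2)$, so they hold on the equilibrium domains.

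\emph{Kinematic condition.} From \eqref{eq:normal-displacement}, $\partial_t X=\varepsilon(\dot\xi+\partial_t h\,n)$. The normal of $\Gamma(t)$ satisfies $\nu=n+O(\varepsilon)$, so $V_\Gamma=\partial_tX\cdot\nu=\varepsilon\partial_t\eta+O(\varepsilon^2)$. On the fluid side, $v_\alpha(X,t)\cdot\nu=\varepsilon u_\alpha(R_0n,t)\cdot n+O(\varepsilon^2)$; here the Taylor shift from $X$ back to $R_0n$ and the normal correction each contribute an extra factor $\varepsilon$. Comparing the two gives \eqref{eq:lin-kinematic}.

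\emph{Dynamic condition.} This step needs the most care and is the main obstacle. We expand $P_\alpha(X,t)=p_\alpha+\varepsilon p^{(1)}_\alpha(R_0n,t)+O(\varepsilon^2)$, using that $\nabla p_\alpha=0$ so no geometric shift term appears at first order. We then need the first variation of the mean curvature of the normal graph $r=R_0+\varepsilon\eta$. Translations leave the curvature unchanged, so we may work with $\eta$ in place of $h$; this is consistent because $(\Delta_{S^{(0)}}+2/R_0^2)$ annihilates degree-one harmonics. We will invoke the standard formula $\delta\kappa=-(\Delta_{S^{(0)}}+|\mathrm{II}|^2)\eta$ with $|\mathrm{II}|^2=2/R_0^2$ on the sphere, where $\kappa=\nabla_\Gamma\cdot\nu$ is positive for outward normals. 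As a check, take $\eta$ constant: then $\kappa=2/(R_0+\varepsilon\eta)=2/R_0-2\varepsilon\eta/R_0^2+\dots$, which confirms both the sign and the zeroth-order coefficient. The $O(1)$ part $p_g-p_\ell=2\sigma/R_0$ cancels by the equilibrium relation. The $O(\varepsilon)$ part gives $p^{(1)}_g-p^{(1)}_\ell=-\sigma(\Delta_{S^{(0)}}+2/R_0^2)\eta$, which is \eqref{eq:lin-dynamic}. To verify the curvature formula directly, we would write the graph's normal as $\nu=n-\varepsilon\nabla_{S^{(0)}}\eta+O(\varepsilon^2)$ and compute the surface divergence on the perturbed metric to first order. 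We expect the sign conventions there to be the delicate point.
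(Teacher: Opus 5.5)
Your proposal is correct and follows the same route as the paper's derivation sketch. You substitute the expansions, cancel the $O(1)$ equilibrium terms (including $p_g-p_\ell=2\sigma/R_0$), collect the $O(\varepsilon)$ terms, and invoke the standard first-variation formula for the mean curvature, which the paper simply cites from shape calculus; your extra details are consistent with the stated result. These details are the constant-$\eta$ sign check, the observation that translations may be absorbed into $\eta$ because $\Delta_{S^{(0)}}+2/R_0^2$ annihilates $\mathcal Y_1$, and keeping only the normal part of $\partial_t X$ (the paper's $\partial_t X=\varepsilon\partial_t\eta\,n$ is accurate only in its normal component).
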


\begin{proof}[Derivation sketch]
The bulk equations follow by inserting \eqref{eq:exp-vl}--\eqref{eq:exp-pl} into the momentum and pressure-continuity formulations and collecting the $O(\varepsilon)$ terms. For the interface, differentiating \eqref{eq:normal-displacement} in time gives 
\begin{align*}
\partial_t X=\varepsilon \partial_t\eta\,n + O(\varepsilon^2),
\end{align*}
which inserted into \eqref{eq:kinematic-full} yields \eqref{eq:lin-kinematic}. The dynamic condition follows from the first-order expansion of \eqref{eq:dynamic-full} around the Young--Laplace equilibrium law. The curvature variation formula is standard shape calculus; see, for instance, \cite{HenrotPierre2018, LamboleyShapeIntro}. This gives \eqref{eq:lin-dynamic}.

In the radial-mode truncation, $\eta(\theta,\varphi,t)\equiv \eta(t)$ and hence
\begin{align}
  p^{(1)}_{\ell}(x,t) - p^{(1)}_{g}(x,t)
   = \frac{2\sigma}{R_{0}^2}\eta(t),
   \quad x\in S^{(0)},\ t>0.
  \label{eq:breathing-dynamic}
\end{align}
Clearly, \eqref{eq:breathing-dynamic} is its restriction to the $\ell=0$ spherical harmonic sector.
\end{proof}

Introducing a single pressure field $p(x,t)$ defined piecewise by
\begin{align*}
  p(x,t) :=
\begin{cases}
p^{(1)}_{\ell}(x,t), & x\in\Omega_\ell^{(0)},\\[0.2em]
p^{(1)}_{g}(x,t), & x\in \Omega_g^{(0)}.
\end{cases}
\end{align*}
From now on, we use $p_\ell$ and $q_\ell$
to denote the restrictions of $p$ and $q$ to the liquid domain, while $p_{g}$ and $q_{g}$ denote their restrictions to the gas bubble domain.
Using the interface relations obtained in Proposition \ref{prop:first-order-linearization}, we derive an equivalent fixed-domain pressure--interface formulation of the linearized
moving-interface system. In this formulation, the velocity fields and density perturbations are eliminated, and the remaining unknowns are the pressure field $p$ and the
interface displacement $\eta$. The resulting system is
\eqref{eq:bulk-wave-liquid-final}--\eqref{eq:jump-interface-final}.
For later use, we introduce the velocity variables
\begin{align*}
q_\alpha:=\partial_t p_\alpha,\qquad \zeta:=\partial_t\eta,
\end{align*}
and prescribe initial data
\begin{align*}
p_\alpha(0) = p_{\alpha,0},\qquad
q_\alpha(0) = q_{\alpha,0},\qquad
\eta(0)=\eta_0,\qquad
\zeta(0)=\zeta_0 .
\end{align*}
The precise regularity and compatibility assumptions on these data will be specified in the well-posedness analysis. Since the reference interface is spherical, we shall repeatedly use the spherical harmonic decomposition introduced in the next subsection.

\subsection{Spherical harmonic decomposition}
\label{subsec:spherical-harmonics}

Since the reference interface is spherical, the linearized dynamics is
naturally decomposed by spherical harmonics. We write $x=r\hat x$, with
$r>0$ and $\hat x\in\mathbb S^2$, and denote by $Y_l^m$ the spherical
harmonics on $\mathbb S^2$. We set
\begin{align*}
\mathcal Y_0:=\operatorname{span}\{Y_0^0\},\qquad
\mathcal Y_1:=\operatorname{span}\{Y_1^{-1},Y_1^0,Y_1^1\},
\end{align*}
and
\begin{align*}
\mathcal Y_{\ge2}:=
\overline{\operatorname{span}\{Y_l^m:\ l\ge2,\ -l\le m\le l\}}^{\,L^2(S^{(0)})}.
\end{align*}
Thus
\begin{align*}
L^2(S^{(0)})=\mathcal Y_0\oplus\mathcal Y_1\oplus\mathcal Y_{\ge2}.
\end{align*}
Let $\Pi_0$, $\Pi_1$, and $\Pi_{\ge2}$ denote the corresponding
$L^2(S^{(0)})$-orthogonal projections, and set
\begin{align*}
\Pi_{\ge1}:=I-\Pi_0=\Pi_1+\Pi_{\ge2}.
\end{align*}
For the pressure variables we use the decomposition
\begin{align*}
p_\alpha(r,\hat x,t) =
p_\alpha^{(0)}(r,t)Y_0^0(\hat x)+p_\alpha^\perp(r,\hat x,t),
\qquad
\Pi_0 p_\alpha^\perp=0,
\quad  \alpha\in\{\ell,g\},
\end{align*}
and similarly
\begin{align*}
\eta(\hat x,t) =
\eta^{(0)}(t)Y_0^0(\hat x)+\eta^\perp(\hat x,t),
\qquad
\Pi_0\eta^\perp=0 .
\end{align*}
The $Y_0^0$ sector is studied through the radial variables
\begin{align*}
v_\alpha(r,t):=rp_\alpha^{(0)}(r,t),
\qquad \alpha\in \{\ell,g\},
\end{align*}
whereas the orthogonal component is written as a first-order evolution system with
\begin{align*}
U=(p,q,\eta,\zeta).
\end{align*}
For traces on $S^{(0)}$, we use
\begin{align*}
[p]:=p_\ell|_{S^{(0)}}-p_g|_{S^{(0)}}.
\end{align*}
Whenever the flux continuity condition holds, we denote the common weighted normal flux by
\begin{align*}
\partial_n p:=\frac1{\rho_\ell}\partial_n p_\ell
=\frac1{\rho_g}\partial_n p_g.
\end{align*}

\paragraph{General notation.}
Throughout the paper, $H^s(\Omega)$ and $H^s(\Gamma)$ denote the standard Sobolev spaces on a smooth domain $\Omega$ and on the smooth surface $\Gamma$, respectively. Their norms are denoted by $\|\cdot\|_{H^s(\Omega)}$ and $\|\cdot\|_{H^s(\Gamma)}$. Surface differential operators, such as $\nabla_{S^{(0)}}$ and $\Delta_{S^{(0)}}$, are always taken with respect to the reference sphere $S^{(0)}$, unless otherwise specified. For product spaces, we use the natural product norms without further comment. The symbol $\mathbb I$ denotes the identity matrix. The letter $C$ denotes a generic positive constant which may change from line to line.

\section{Well-posedness}
\label{sec:wellposedness}

In this section, we prove the well-posedness of the linearized spherical
system using the modal decomposition introduced in
Subsection~\ref{subsec:spherical-harmonics}. Since the reference interface is
spherical, the $Y_0^0$ sector and its orthogonal complement are invariant
under the linearized dynamics. Hence the full problem splits into two
independent subsystems. The $Y_0^0$ sector is treated in
Subsection~\ref{sec:wellposedness_1} by a weighted Fourier--Laplace argument, while
the $\Pi_{\ge1}$-component is handled in
Subsection~\ref{sec:wellposdeness_2} by an energy-based semigroup method.
 
\subsection {\texorpdfstring {Weighted Fourier--Laplace well-posedness of the $Y_0^0$ sector}{}} \label{sec:wellposedness_1}

We first describe the projected $Y_0^0$ system. Let $v(r,t):=r p^{(0)}(r,t)$. 
Then the $Y_0^0$ sector system is
\begin{align}
    & \partial_{tt}v_g-c_g^2\partial_{rr}v_g=0,
        \quad 0<r<R_0, \label{eq:t_0} \\ 
    & \partial_{tt}v_\ell-c_\ell^2\partial_{rr}v_\ell=0,
        \quad r> R_0, \\ 
    & v_g(0,t)=0, \\ 
 &v_\ell(R_0,t)-v_g(R_0,t)=\frac{2\sigma}{R_0}\eta^{(0)}(t), \label{eq:t_1}\\
    &\dfrac1{\rho_g R_0^2}
    \left( R_0\partial_r v_g(R_0,t) - v_g(R_0,t)
    \right)
    =
    \dfrac1{\rho_\ell R_0^2}
    \left(
R_0\partial_rv_\ell(R_0,t)- v_\ell(R_0,t)
    \right)
    = -\frac{d^2}{dt^2}\eta^{(0)}(t)\label{eq:t_2}.
\end{align}
Here, 
\begin{align*}
    c_\alpha := \sqrt\frac{K_\alpha}{\rho_\alpha}.
\end{align*}

We now prove the well-posedness of the $Y_0^0$ system \eqref{eq:t_0}--\eqref{eq:t_2}. The argument is
based on a weighted Fourier--Laplace analysis. 
For a function $f$ defined on $t\ge0$, we use the one-sided Laplace
transform
\begin{align*}
    \widehat f(s):=\int_0^{+\infty} e^{-st}f(t)\,dt, \qquad s=\gamma+i\xi,\; \gamma>0.
\end{align*}
For an integer $m\ge0$, we write
\begin{align*}
    \|f\|_{H^m_\gamma(0,\infty)}^2
    :=
    \sum_{j=0}^m
     \int^{+\infty}_0 e^{-\gamma t}|\partial_t^j f(t)|^2 dt.
\end{align*}

We next introduce the boundary impedances and compliances associated with
the transformed one-dimensional gas and liquid wave equations. These
quantities encode the response of the boundary traces to the normal force
in the \(Y_0^0\) sector. For \(s=\gamma+i\xi\), set
\begin{align*}
    Z_g(s)
    := \frac1{\rho_g}
    \left(
        \frac{s}{R_0c_g}
    \coth\left(\frac{sR_0}{c_g}\right) -
    \frac1{R_0^2}
    \right),
\qquad
    Z_\ell(s)
    :=
    \frac1{\rho_\ell R_0^2}
    \left(
        1+\frac{sR_0}{c_\ell}
    \right).
\end{align*}
We denote the corresponding compliance functions by
\begin{align*}
 C_g(s):=\frac1{Z_g(s)},
    \qquad
C_\ell(s):=\frac1{Z_\ell(s)}.
\end{align*}
The scalar multiplier for the $Y_0^0$ boundary equation is defined by
\begin{align} \label{eq:multiplier}
    M(s)
    :=
    s\left(C_g(s)+C_\ell(s)\right)
    -
    \frac{2\sigma}{R_0s}.
\end{align}

The following lemma provides a lower bound for the multiplier $M$, which plays an important role in the proof of the well-posedness of the $Y_0^0$ sector. Its proof is postponed to the Appendix \ref{sec:A}.

\begin{lemma} \label{le:multiplier}
Let $M$ be given by \eqref{eq:multiplier} and let $R_0, \rho_{\ell,*},c_{\ell,*}, \sigma_*  >0$.
Assume that $\rho_\ell\ge\rho_{\ell,*}>0,\;
c_\ell\ge c_{\ell,*}>0$ and $0<\sigma\le \sigma_*$. Then, there exists $\gamma_*= \gamma_*(R_0, \rho_{\ell,*},c_{\ell,*},\sigma_*)>0$ 
such that, for every \(\gamma\ge\gamma_*\), there exists
$m_\gamma>0$ satisfying
\begin{align} \label{eq:estimate}
    |M(\gamma+i\xi)|\ge m_\gamma
    \qquad \text{for all}\; \xi\in\mathbb R.
\end{align}
Here, $m_\gamma$ is a positive constant.

\end{lemma}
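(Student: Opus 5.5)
The plan is to bound $|M(s)|$ below by $\operatorname{Re} M(s)$ and to show that, for $\gamma$ large, $\operatorname{Re}M(\gamma+i\xi)$ has a positive lower bound uniform in $\xi\in\mathbb R$. There are three terms:
\begin{itemize}
\item the liquid compliance term $sC_\ell(s)$, which has strictly positive real part;
\item the gas compliance term $sC_g(s)$, which I expect to have nonnegative real part by passivity;
\item the capillary term $-2\sigma/(R_0 s)$, which is the only one with the wrong sign and is small when $\gamma$ is large.
\end{itemize}
This structure would also explain why $\gamma_*$ depends only on $R_0,\rho_{\ell,*},c_{\ell,*},\sigma_*$ and not on the gas parameters.

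\emph{Step 1 (gas term).} I would show that $Z_g(s)\neq 0$ and $\operatorname{Re}\bigl(sC_g(s)\bigr)\ge 0$ for $\operatorname{Re}s>0$. I would not work with the $\coth$ formula directly. Instead I would use the transformed interior problem: $s^2 u/K_g-\rho_g^{-1}\Delta u=0$ in $B^{(0)}$, with radial $u=v_g/r$ and flux $g=\rho_g^{-1}\partial_r u$ on $S^{(0)}$, so that $Z_g$ is the ratio $g/u(R_0)$ up to sign. Multiplying by $\bar s\,\bar u$ and integrating by parts gives an identity of the form
\begin{align*}
\operatorname{Re}\Bigl(\bar s\, \overline{u(R_0)}\, g\Bigr)\,|S^{(0)}|
=\gamma\Bigl(\tfrac{|s|^2}{K_g}\|u\|^2_{L^2}+\tfrac1{\rho_g}\|\nabla u\|^2_{L^2}\Bigr)\ge0,
\end{align*}
with the sign fixed by the orientation convention in \eqref{eq:t_2}. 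This identity gives $g\neq0$ whenever $u\not\equiv0$, which yields both claims. I would check the sign convention against the explicit formula for $Z_g$ at real $s>0$: there $s\coth(sR_0/c_g)\,R_0/c_g>1$, so $Z_g>0$. This step is the main place where care is needed, but it is routine.

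\emph{Step 2 (liquid term).} Set $a:=R_0/c_\ell$. Then $sC_\ell(s)=\rho_\ell R_0^2\, s/(1+as)$, and a direct computation gives
\begin{align*}
\operatorname{Re}\bigl(sC_\ell(s)\bigr)=\rho_\ell R_0^2\,\frac{\gamma+a|s|^2}{|1+as|^2}\ge \rho_\ell R_0^2\,\frac{\gamma+a|s|^2}{(1+a|s|)^2}.
\end{align*}
With $x:=a|s|$ and $|s|\ge\gamma$, I split into two cases.
\begin{itemize}
\item If $x\le1$, then using $\gamma$ in the numerator, the bound is $\ge \rho_\ell R_0^2\gamma/4$.
\item If $x\ge1$, then using $x|s|$ in the numerator, the bound is $\ge \rho_\ell R_0^2\,|s|/(4x)=\rho_\ell R_0 c_\ell/4$.
\end{itemize}
Hence $\operatorname{Re}(sC_\ell)\ge \tfrac14\rho_{\ell,*}R_0^2\min\{\gamma,\ c_{\ell,*}/R_0\}$, uniformly in $\xi$ and in the admissible $\rho_\ell,c_\ell$.

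\emph{Step 3 (capillary term and conclusion).} Since $\operatorname{Re}(1/s)=\gamma/|s|^2\le 1/\gamma$, we have $\operatorname{Re}\bigl(-2\sigma/(R_0s)\bigr)\ge -2\sigma_*/(R_0\gamma)$. Combining the three steps,
\begin{align*}
|M(\gamma+i\xi)|\ge \operatorname{Re}M\ge \tfrac14\rho_{\ell,*}R_0^2\min\{\gamma,\ c_{\ell,*}/R_0\}-\frac{2\sigma_*}{R_0\gamma}=:m_\gamma .
\end{align*}
The quantity $m_\gamma$ is increasing in $\gamma$ and positive for $\gamma\ge\gamma_*(R_0,\rho_{\ell,*},c_{\ell,*},\sigma_*)$, which gives \eqref{eq:estimate}.

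The only genuine obstacle is Step 1: establishing positivity of the gas compliance and nonvanishing of $Z_g$ with the correct orientation. Once that holds, the rest is elementary.
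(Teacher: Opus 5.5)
Your proof is correct and follows the same route as the paper: bound $|M|\ge\operatorname{Re}M$, get a $\xi$-uniform positive lower bound for $\operatorname{Re}(sC_\ell)$, use $\operatorname{Re}(sC_g)\ge0$, and absorb the capillary term $2\sigma_*/(R_0\gamma)$ for large $\gamma$. The paper differs in two ways. It gets the liquid bound $\rho_{\ell,*}R_0^2\gamma/(1+R_0\gamma/c_{\ell,*})$ from an exact algebraic identity instead of your case split. It also simply asserts the gas passivity as known, whereas your energy identity actually proves $\operatorname{Re}(sC_g)\ge0$ and $Z_g\neq0$; note that $Z_g=g/(R_0u(R_0))$ there, a positive factor rather than a sign.
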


We now introduce the class of admissible $Y_0^0$ initial data. For an
integer $m\ge1$, set
\begin{align*}
    I_g:=(0,R_0),
    \qquad
    I_\ell:=(R_0,\infty).
\end{align*}
For $\alpha\in\{g,\ell\}$, define
\begin{align*}
    V_\alpha^{[0]}:= r p^{(0)}_{\alpha,0},
    \qquad
    V_\alpha^{[1]}:= r q^{(0)}_{\alpha,0},
\end{align*}
and, recursively,
\begin{align*}
    V_\alpha^{[j+2]}
    :=
    c_\alpha^2\frac{d^2}{dr^2}V_\alpha^{[j]},
    \qquad j\ge0.
\end{align*}

An initial datum
\begin{align*}
    \mathcal I^{(0)}
    :=
    \left(
        V_\ell^{[0]}, V_g^{[0]}, V_\ell^{[1]},  V_g^{[1]}, \eta^{(0)}_0, \zeta^{(0)}_0 \right)
\end{align*}
is said to belong to \(\mathcal X^{(0)}_m\) if
\begin{align*}
    V_\alpha^{[0]}\in H^{m+1}(I_\alpha),
    \qquad
    V_\alpha^{[1]}\in H^m(I_\alpha),
    \qquad 
    \eta^{(0)}_0,\; \zeta^{(0)}_0\in\mathbb R,
\end{align*}
and the following compatibility conditions hold.
\begin{align}
    &V_g^{[j]}(0)=0, \notag\\
    &E_0:=\eta^{(0)},
    \qquad
    E_1:=\zeta^{(0)}, \qquad
    E_{j+2}
    :=
    -\mathcal B_g V_g^{[j]}
    =
    -\mathcal B_\ell V_\ell^{[j]}, \notag\\
    &V_\ell^{[j]}(R_0) - V_g^{[j]}(R_0)
    =
    \frac{2\sigma}{R_0}E_j,
    \qquad
    0\le j\le m. \label{eq:37}
\end{align}
Here, we write
\begin{align*}
    \mathcal B_\alpha \phi
    :=
    \frac1{\rho_\alpha R_0^2}
    \left(
        R_0\phi'(R_0)-\phi(R_0)
    \right).
\end{align*}
We measure the element in $\mathcal X^{(0)}_m$ by the norm 
\begin{align*}
\|\mathcal I\|_{\mathcal X^{(0)}_m} = \sum^{m}_{j=0}\left[|E_{j+1}|  + \sum_{\alpha \in \{\ell,g\}} \left\|\frac{d}{dr}V_\alpha^{[j]}\right\|_{L^2(I_\alpha)} + \sum_{\alpha \in \{\ell,g\}} \|V_\alpha^{[j+1]}\|_{L^2(I_\alpha)}\right]
\end{align*}

Now we state the well-posedness result in the $Y_0^0$ sector.

\begin{theorem}[Weighted well-posedness of the \(Y_0^0\) sector] \label{thm:wellposed-B}
Let $R_0, \rho_{\ell,*},  c_{\ell,*},  \rho_{g,*},  c_{g,*},\sigma_*>0$.
Assume that $\rho_\alpha \ge\rho_{\alpha,*}>0,\;
c_\alpha\ge c_{\alpha,*}>0$ for $\alpha \in \{\ell, g\}$ and $0<\sigma\le \sigma_*$. For every admissible initial datum
$\mathcal I^{(0)}\in\mathcal X^{(0)}_m$, the $Y_0^0$ system
\eqref{eq:t_0}--\eqref{eq:t_2} admits a unique weighted solution in the
following sense:
\begin{align} \label{eq:51}
    \frac{d}{dt}\eta^{(0)} \in H^m_\gamma(0,\infty),
    \qquad
    \eta^{(0)}\in H^{m}_\gamma(0,\infty),
    \qquad
    v_g(R_0,\cdot), \;\; v_\ell(R_0,\cdot)\in H^m_\gamma(0,\infty),
\end{align}
and, for $\alpha\in\{\ell,g\}$,
\begin{align} \label{eq:52}
    \partial_t^j v_\alpha
    \in L^2_\gamma(0,\infty;L^2(I_\alpha)),\qquad
    0\le j\le m.
\end{align}
Here, $\gamma>\widetilde \gamma_*$, where
$\widetilde\gamma_*=\gamma_*(R_0,\rho_{\ell,*},  c_{\ell,*},\rho_{g,*}, c_{g,*},\sigma_*)>0$ is a fixed threshold.

Moreover, if $m \ge 1$, we have that: for $\alpha\in\{g,\ell\}$,
\begin{align} \label{eq:53}
\partial_t^{j-1} v_\alpha
\in L^2_\gamma(0,\infty;H^1(I_\alpha)),\qquad 1\le j\le m.
\end{align}
\end{theorem}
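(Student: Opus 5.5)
We plan to prove Theorem~\ref{thm:wellposed-B} by Laplace transform in time, reducing everything to the scalar boundary equation governed by the multiplier $M(s)$ of \eqref{eq:multiplier}. We work with $s=\gamma+i\xi$, $\gamma$ large.

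\emph{Step 1: homogenization of the data.} To avoid initial data appearing as sources in the ODEs in $r$, we first subtract a particular solution. For each phase, let $w_\alpha$ solve the 1D wave equation on $I_\alpha$ with initial data $(V_\alpha^{[0]},V_\alpha^{[1]})$. For $\alpha=g$ we impose the Dirichlet condition at $r=0$, and $w_g(R_0,t)$ is prescribed by a smooth extension in time of the Taylor data $V_g^{[j]}(R_0)$. For $\alpha=\ell$ we take the outgoing extension, e.g.\ d'Alembert with $V_\ell$ extended to $\mathbb R$. The compatibility conditions \eqref{eq:37} guarantee that these auxiliary solutions have the regularity $H^{m}$ in time with values in the right spaces, and that the remainder $\tilde v_\alpha=v_\alpha-w_\alpha$, $\tilde\eta=\eta^{(0)}-e(t)$, where $e$ is a polynomial-times-cutoff matching $E_0,\dots,E_{m+1}$, has vanishing initial traces up to order $m$. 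The remainder then satisfies the same system with forcing terms $F(t)$ in the two interface equations, lying in $H^m_\gamma$ and bounded by $\|\mathcal I\|_{\mathcal X^{(0)}_m}$.

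\emph{Step 2: transformed problem.} With zero initial data, the Laplace transform gives $s^2\hat v_\alpha-c_\alpha^2\hat v_\alpha''=0$. Hence $\hat v_g=A\sinh(sr/c_g)$ and $\hat v_\ell=Be^{-s(r-R_0)/c_\ell}$, the latter being the unique $L^2$ solution for $\operatorname{Re}s>0$. The flux condition \eqref{eq:t_2} gives $\mathcal B_\alpha\hat v_\alpha=-s^2\hat\eta$. By the definitions of $Z_g$ and $Z_\ell$, this means $\hat v_g(R_0)=-C_g(s)s^2\hat\eta$ and $\hat v_\ell(R_0)=C_\ell(s)s^2\hat\eta$, up to sign conventions which we check carefully. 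Inserting these into \eqref{eq:t_1} yields $s\,M(s)\,s\hat\eta=\hat F(s)$, i.e.\ $\hat\eta=\hat F/(s^2M(s))$. By Lemma~\ref{le:multiplier}, $|M|\ge m_\gamma$ on $\operatorname{Re}s=\gamma$, so $\hat\eta$ and $s\hat\eta$ are well defined and analytic. We must also keep $\gamma$ above the poles of $C_g$, i.e.\ the zeros of $Z_g$; these lie on or near the imaginary axis, since $\coth$ is bounded away from its singularities for $\operatorname{Re}s\ge\gamma$. This gives the threshold $\widetilde\gamma_*$, which depends additionally on $\rho_{g,*}$ and $c_{g,*}$.

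\emph{Step 3: estimates and inversion.} We need bounds of the form $|sC_\alpha(s)|\lesssim 1$ uniformly in $\xi$ for fixed large $\gamma$. For large $|s|$ these follow from $Z_g\sim s/(\rho_gR_0c_g)$ and $Z_\ell\sim s/(\rho_\ell R_0c_\ell)$, and for bounded $|s|$ from compactness. Together with $|M|\ge m_\gamma$, Plancherel in the weighted space (Paley--Wiener) gives $s^j\hat\eta$ and $s^j\hat v_\alpha(R_0)$ in $L^2$ along the line, hence \eqref{eq:51}. The interior bounds \eqref{eq:52} and \eqref{eq:53} follow from the explicit kernels $\sinh(sr/c_g)/\sinh(sR_0/c_g)$ and $e^{-s(r-R_0)/c_\ell}$. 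Their $L^2(I_\alpha)$ norms are $O(|s|^{-1/2})$ uniformly for $\operatorname{Re}s=\gamma$, and one $r$-derivative costs a factor $|s|$, which is why \eqref{eq:53} loses one time derivative. Causality, i.e.\ the vanishing for $t<0$, is a consequence of analyticity in the half plane, and uniqueness follows because any solution with zero data has $\hat\eta=0$ once $M\ne0$.

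\emph{Main obstacle.} We expect the main difficulty in Step~1: producing the lifting $w_\alpha$ whose traces match the compatibility hierarchy \eqref{eq:37}, so that the remainder has vanishing data and forcing in $H^m_\gamma$. We also expect subtlety in the uniform control of $C_g$ near the zeros of $Z_g$; the first thing to try there is to choose $\gamma$ large enough that $|\coth(sR_0/c_g)-1|$ is exponentially small, so that $Z_g$ is uniformly comparable to $s$.
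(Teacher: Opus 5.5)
Your argument is sound and rests on the same core mechanism as the paper. Both use the Laplace transform in $t$, the explicit profiles $\sinh(sr/c_g)$ and $e^{-s(r-R_0)/c_\ell}$, a reduction of the interface conditions to one scalar equation governed by $M(s)$, the lower bound of Lemma~\ref{le:multiplier}, weighted Plancherel, and uniqueness from $M\neq0$.

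Where you genuinely differ is in how you treat the initial data. The paper keeps $(V_\alpha^{[0]},V_\alpha^{[1]})$ as sources in the transformed radial ODEs. It splits off free solutions $\widehat v_\alpha^{\,f}$ with zero trace at $R_0$ and proves the $L^2_\xi$ bounds \eqref{eq:30}--\eqref{eq:31} for $C_\alpha\widehat H_\alpha$. Higher $m$ then comes from reapplying the $m=0$ estimate to the time-differentiated data $(V_\alpha^{[j]},V_\alpha^{[j+1]},E_j,E_{j+1})$, and \eqref{eq:53} comes from an energy identity in the Laplace variable.

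Your time-domain lift turns all data into interface forcings with vanishing initial traces. Then $\widehat{\partial_t^j f}=s^j\widehat f$ gives every order at once, the compatibility conditions \eqref{eq:37} enter exactly once, and \eqref{eq:52}--\eqref{eq:53} follow directly from the explicit kernels. The price is building the lifts and controlling their Neumann traces in $H^m_\gamma$, which is elementary in one dimension via d'Alembert and reflection. In exchange, the paper's route tracks the $c_\alpha$-dependence of its constants (the factors $1+c_\ell^{-1/2}$ and $1+c_g^{-1}$), which Remark~\ref{rem:uniform-estimate-reduction} reuses for the limit $c_g,c_\ell\to\infty$. For your lifts this would need a separate check, although the theorem itself does not require it.

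Three details need fixing.

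\emph{The scalar equation.} From your relations $\widehat v_g(R_0)=-C_g s^2\widehat\eta$ and $\widehat v_\ell(R_0)=C_\ell s^2\widehat\eta$, the jump condition gives $\bigl(s^2(C_g+C_\ell)-\tfrac{2\sigma}{R_0}\bigr)\widehat\eta=sM(s)\widehat\eta=\widehat{\mathcal F}$. Here $\widehat{\mathcal F}$ collects the jump forcing and $C_\alpha$ times the flux forcings. Hence $s\widehat\eta=\widehat{\mathcal F}/M$, not $\widehat F/(s^2M)$. This is exactly why one gets $\tfrac{d}{dt}\eta^{(0)}\in H^m_\gamma$ and no extra derivative.

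\emph{The kernel norms.} The kernels do not decay along a vertical line: $\|e^{-s(r-R_0)/c_\ell}\|_{L^2(I_\ell)}=\sqrt{c_\ell/(2\gamma)}$ for every $\xi$, and the gas kernel is likewise only $O(1)$. That is enough. The $O(1)$ bound gives \eqref{eq:52}, and the factor $|s|$ produced by $\partial_r$ gives \eqref{eq:53} with the loss of one time derivative. So your conclusion stands, but the stated $O(|s|^{-1/2})$ premise is false.

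\emph{The threshold.} The zeros of $Z_g$ lie exactly on the imaginary axis, by passivity: $\operatorname{Re}(sC_g)\ge0$ for $\operatorname{Re}s>0$, cf.\ \eqref{eq:46}. On $\operatorname{Re}s=\gamma$ one has $\tanh(\gamma R_0/c_g)\le|\coth(sR_0/c_g)|\le\coth(\gamma R_0/c_g)$, so $|sC_g|\lesssim1$ for every $\gamma>0$. Do not push $\gamma$ up until $\coth\approx1$. That would make the threshold depend on an upper bound for $c_g$, whereas the statement allows dependence only on the lower bounds.
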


\begin{proof}
Let $s = \gamma + i\xi$. We begin with looking for the solution of the following equations 
\begin{align}
 &Z_g(s)\widehat V_g(s)+\widehat H_g(s) =  -Z_\ell(s)\widehat V_\ell(s)+\widehat H_\ell(s) = (- s\hat \zeta(s) + \zeta^{(0)}), \label{eq:21} \\
 &\widehat V_\ell(s) - \widehat V_g(s) = \frac{2\sigma}{R_0} \frac{\hat \zeta(s) + \eta^{(0)}}{s}, \label{eq:22}
\end{align}
where 
\begin{align*}
\hat H_\alpha:= \frac 1{R_0 \rho_\alpha} \partial_r \hat v^f_\alpha\quad  \mathrm{for}\; \alpha \in \{\ell,g\}.
\end{align*}
Here,
\(\widehat v_g^{\,f}\) solves
\begin{align*}
    &s^2\widehat v_g^{\,f}
    -
    c_g^2\partial_{rr}\widehat v_g^{\,f}
    =
    s V_g^{[0]} +   V_g^{[1]},
    \qquad
    0<r<R_0,\\
    &\widehat v_g^{\,f}(s,0)=0,
    \qquad
    \widehat v_g^{\,f}(s,R_0)=0,
\end{align*}
and $\widehat v_\ell^{\,f}$ solves
\begin{align*}
& s^2\widehat v_\ell^{\,f}
    -
    c_\ell^2\partial_{rr}\widehat v_\ell^{\,f}
    =
    s  V_\ell^{[0]} +  V_\ell^{[1]},
    \qquad
    r>R_0,
& \widehat v_\ell^{\,f}(R_0,s)=0, \qquad \lim_{r\rightarrow \infty} v^f(R_0,s) = 0.
\end{align*}
Combining the above two equations gives the scalar equation below:
\begin{align*}
    M(s)\widehat \zeta(s)=\widehat{\mathcal H}(s),
\end{align*}
and
\begin{align*}
   \widehat{\mathcal H}(s)
    :=
    \left(C_g(s)+C_\ell(s)\right)\zeta^{(0)}
    -
    \frac{2\sigma\eta^{(0)}}{R_0s}
    +
    C_g(s)\widehat H_g(s)
    +
   C_\ell(s)\widehat H_\ell(s).
\end{align*}
It follows from  \eqref{eq:estimate} that 
\begin{align} \label{eq:20}
\|\widehat \zeta(\gamma+i\xi)\|_{L^2(\mathbb R)}
\le \frac1{m_\gamma} \|\widehat{\mathcal H}(\gamma+i\xi)\|_{L^2_\xi}, \qquad \gamma > \gamma^*,\; \xi \in \mathbb R.
\end{align}
Here, $\gamma^*$ is chosen such that \eqref{eq:estimate} holds.

Furthermore, we have 
\begin{align}
&\left\|
    C_\ell(\gamma+i\xi)
    \widehat H_\ell(\gamma+i\xi)
    \right\|_{L^2(\mathbb R)}
    \le
     C_{\gamma, R_0}\left(1 + \frac{1} {\sqrt {c_{\ell}}}\right)
    \left(
        \big\| V_\ell^{'[0]}\big\|_{L_\xi^2(I_\ell)}
        +
        \big\| V_\ell^{[1]}\big\|_{L^2(I_\ell)}
    \right), \label{eq:30} \\
 & \left\|
        C_g(\gamma+i\xi)
        \widehat H_g(\gamma+i\xi)
    \right\|_{L_\xi^2(\mathbb R)}
    \le
     C_{\gamma, R_0}\left(1 + \frac{1} {c_{g}}\right)
    \left(
        \big\|V_g^{'[0]}\big\|_{L^2(I_g)}
        +
        \big\| V_g^{[1]}\big\|_{L^2(I_g)}
    \right), \label{eq:31}
\end{align}
The proof of \eqref{eq:30} and \eqref{eq:31} will be given in the Appendix \ref{sec:A}.
Moreover, it can be seen that there exists a fixed threshold $\widehat \gamma_* = \widehat \gamma_*(R_0, c_{\ell,*},\rho_{\ell,*}, c_{g,*},\rho_{g,*},\sigma_0)>0$ such that $\gamma \ge \widehat  \gamma_*$, 
\begin{align}
  & \left\|
        C_\ell(\gamma+i\xi)
    \right\|_{L_\xi^2(\mathbb R)}
    \le
    C_{\gamma,R_0} \frac{1}{1 + |\gamma + i\xi|}, \qquad \left\|
        C_g(\gamma + i\xi)
    \right\|_{L_\xi^2(\mathbb R)}
    \le
    C_{\gamma,R_0} \frac{1}{1+|\gamma + i\xi|}. \label{eq:32}
\end{align}
Let 
\begin{align*}
   \widetilde \gamma^* = \max(\gamma^*, \widehat \gamma^*).
\end{align*}
Combining \eqref{eq:20}, \eqref{eq:30}, \eqref{eq:31} and \eqref{eq:32} yields that
\begin{align} \label{eq:26}
\|\hat \zeta \|_{L_\xi^2(\mathbb R)}
\le C _\gamma \|\mathcal I^{(0)}\|_{\mathcal X^{(0)}_0}, \quad \mathrm{for}\; \gamma > \widetilde \gamma^*.
\end{align}
Furthermore, it follows from \eqref{eq:21} that 
\begin{align} \label{eq:55}
 \hat V_\ell(s) = -C_\ell(s)(-s\hat \zeta(s) + \zeta^{(0)} - \hat H_\ell(s)), \quad  \hat V_g(s) = C_g(s)(-s\hat \zeta(s) + \zeta^{(0)} - \hat H_g(s))
\end{align}
Combining this with \eqref{eq:30}, \eqref{eq:31} and \eqref{eq:32} gives 
\begin{align}\label{eq:47}
\|\hat V_g \|_{L_\xi^2(\mathbb R)} + \|\hat V_\ell \|_{L_\xi^2(\mathbb R)}
\le C_{\gamma,R_0} \|\mathcal I^{(0)}\|_{\mathcal X^{(0)}_0}, \quad \mathrm{for}\; \gamma > \widetilde \gamma^*.
\end{align}

Now we define
\begin{align}
    &\widehat v_g(s,r)
    =
    \frac{\sinh(sr/c_g)}{\sinh(sR_0/c_g)}
    \widehat V_g(s)
    +
    \widehat v_g^{\,f}(s,r),
    \qquad
    0<r<R_0, \label{eq:23}\\
    &\widehat v_\ell(s,r)
    =
    e^{-s(r-R_0)/c_\ell}\widehat V_\ell(s)
    +
    \widehat v_\ell^{\,f}(s,r),
    \qquad
    r>R_0, \label{eq:24}\\
   & \hat\eta(s) = \frac{\hat \zeta(s) + \eta^{(0)}}{s}. \label{eq:25}
\end{align}
Using \eqref{eq:26} and \eqref{eq:25}, we obtain 
\begin{align}
    \|\hat \eta^{(0)}\|_{L_\xi^2(\mathbb R)}
    \le  C_{\gamma,R_0} \|\mathcal I^{(0)}\|_{\mathcal X^{(0)}_0}, \quad \mathrm{for}\; \gamma > \widetilde \gamma^*. \notag
\end{align}

Since for $\alpha \in \left\{\ell, g\right\}$,
\begin{align*}
\left\| \hat v_\alpha
    \right\|_{L^2_\gamma(0,\infty;L^2(I_\alpha))}
    \le C_{\gamma, R_0}
     \left(
        \big\| V_\alpha^{'[0]}\big\|_{L^2(I_\alpha)}
        +
        \big\| V_\alpha^{[1]}\big\|_{L^2(I_\alpha)}
    \right), 
\end{align*}
and 
\begin{align*}
    \left\|
        \frac{\sinh(sr/c_g)}{\sinh(sR_0/c_g)}
    \right\|_{L^2(I_g)}
    \le C_\gamma,
    \qquad
    \left\|
        e^{-s(r-R_0)/c_\ell}
    \right\|_{L^2(I_\ell)}
    \le C_\gamma, 
\end{align*}
applying \eqref{eq:47}, \eqref{eq:23} and \eqref{eq:24} gives
\begin{align*}
    \sum_{\alpha\in\{g,\ell\}}
\| v_\alpha\|_{L^2_\gamma(0,\infty;L^2(I_\alpha))} \le  C_{\gamma,R_0} \|\mathcal I^{(0)}\|_{\mathcal X^{(0)}_0}, \quad \mathrm{for}\; \gamma > \widetilde \gamma^*.
\end{align*}
The above argument proves \eqref{eq:51} and  \eqref{eq:52} hold in the case $m=0$. 

For $ 0 \le j \le m$, we define
\begin{align*}
\widehat{v_{\alpha}^{[j]}}(s)
=
s^j\widehat v_\alpha(s)
-
\sum_{k=0}^{j-1}s^{j-1-k}V_\alpha^{[k]},
\end{align*}
and
\begin{align*}
\widehat{\eta^{[j]}}(s)
=
s^j\widehat\eta^{(0)}(s)
-
\sum_{k=0}^{j-1}s^{j-1-k}E_k.
\end{align*}
Equivalently,
\begin{align*}
\widehat{\eta^{[j]}}(s)
=
\frac{\widehat{\zeta^{[j]}}(s)+E_j}{s}.
\end{align*}
Clearly, 
\[
s^2\widehat{v_{\alpha}^{[j]}}
-
c_\alpha^2\partial_{rr}\widehat{v_{\alpha}^{[j]}}
=
sV_\alpha^{[j]} + V_\alpha^{[j+1]}.
\]
Moreover, the compatibility conditions in the definition of
$\mathcal X^{(0)}_m$ imply that these differentiated data satisfy the same
boundary and interface conditions. Therefore the $m=0$ estimate can be
applied to each differentiated system. 

Now we prove uniqueness in the case $m=0$. Since the higher-order solution classes are subspaces of the $m=0$ class, this suffices. Let two weighted solutions be given and denote their difference by
\begin{align*}
    \bigl(
        v_g^{\mathrm{dif}},
        v_\ell^{\mathrm{dif}},
        \eta^{(0),\mathrm{dif}},
        \zeta^{\mathrm{dif}}
    \bigr).
\end{align*}
Then this difference satisfies the homogeneous system with zero initial data. Taking the Laplace transform on the line $s=\gamma+i\xi$, we obtain, for a.e. $\xi\in\mathbb R$,
\begin{align*}
    s^2\widehat v_\alpha^{\mathrm{dif}}
    -
    c_\alpha^2\partial_{rr}\widehat v_\alpha^{\mathrm{dif}}
    =
    0,
    \qquad
    \alpha\in\{g,\ell\},
\end{align*}
in the sense of distributions. Since
$\widehat v_\alpha^{\mathrm{dif}}(s,\cdot)\in L^2(I_\alpha),$
it follows that
\begin{align*}
    \partial_{rr}\widehat v_\alpha^{\mathrm{dif}}
    =
    \frac{s^2}{c_\alpha^2}
    \widehat v_\alpha^{\mathrm{dif}}
    \in L^2(I_\alpha).
\end{align*}
Hence $
    \widehat v_\alpha^{\mathrm{dif}}(s,\cdot)\in H^2_{\mathrm{loc}}(I_\alpha),
$
and the boundary traces are well-defined.
Therefore the homogeneous transformed solutions have the form
\begin{align*}
    &\widehat v_g^{\mathrm{dif}}(s,r)
    =
    \frac{\sinh(sr/c_g)}{\sinh(sR_0/c_g)}
    \widehat V_g^{\mathrm{dif}}(s),
    \qquad
    0<r<R_0,\\
\mathrm{and}\qquad
    &\widehat v_\ell^{\mathrm{dif}}(s,r)
    =
    e^{-s(r-R_0)/c_\ell}
    \widehat V_\ell^{\mathrm{dif}}(s),
    \qquad\;\;\;
    r>R_0,
\end{align*}
where
\begin{align*}
    \widehat V_\alpha^{\mathrm{dif}}(s)
    :=
    \widehat v_\alpha^{\mathrm{dif}}(s,R_0).
\end{align*}
Here the growing exterior mode is excluded by the condition
$\widehat v_\ell^{\mathrm{dif}}(s,\cdot)\in L^2(I_\ell).$
Substituting these expressions into the transformed interface conditions yields the homogeneous scalar equation
\begin{align*}
    M(s)\widehat\zeta^{\mathrm{dif}}(s)=0.
\end{align*}
By the lower bound for $M$ (Lemma \ref{le:multiplier}), we have
\begin{align*}
    \widehat\zeta^{\mathrm{dif}}(\gamma+i\xi)=0 \quad \textrm{for a.e.}\; \xi\in\mathbb R.
\end{align*}
 Since the initial data of the difference vanish, we also have
\begin{align*}
    \widehat{\eta^{(0),\mathrm{dif}}}(s)
    =
    \frac{\widehat\zeta^{\mathrm{dif}}(s)}{s}
    =
    0.
\end{align*}
The jump condition and the remaining algebraic equations then imply
\begin{align*}
    \widehat V_g^{\mathrm{dif}}(s)
    =
    \widehat V_\ell^{\mathrm{dif}}(s)
    =
    0.
\end{align*}
Consequently,
\begin{align*}
    \widehat v_g^{\mathrm{dif}}(s,\cdot)
    =
    \widehat v_\ell^{\mathrm{dif}}(s,\cdot)
    =
    0.
\end{align*}
By the weighted Plancherel theorem for the Laplace transform, we conclude that
\begin{align*}
    v_g^{\mathrm{dif}}
    =
    v_\ell^{\mathrm{dif}}
    =
    \eta^{(0),\mathrm{dif}}
    =
    \zeta^{\mathrm{dif}}
    =
    0.
\end{align*}
Hence the weighted solution is unique.

Finally, we prove \eqref{eq:53}.
Building upon the construction,  
\begin{align}
s^2\widehat v_\ell - c_\ell^2\partial_{rr}\widehat v_\ell = s  V_\ell^{[0]} +  V_\ell^{[1]}, \quad  r > R_0, \quad \lim_{r\rightarrow \infty} \widehat v_\ell = 0, \label{eq:40}\\
    s^2\widehat v_g - c_g^2\partial_{rr}\widehat v_g = s  V_g^{[0]} +  V_g^{[1]}, \quad 0< r< R_0, \label{eq:41}\\
\widehat v_\ell(R_0,s) - \widehat v_g(R_0,s) = \frac{2\sigma}{R_0} \frac{\hat \zeta(s) + \eta^{(0)}}s, \label{eq:38}\\
B_\ell \hat v (R_0,s) =  B_g\hat v(R_0,s). \notag
\end{align}
Let $\hat w_\alpha(r,s):= s\hat v_\alpha(r,s) - p_{\ell,0}$. In conjunction with the compatible condition \eqref{eq:37} and \eqref{eq:38}, we have 
\begin{align} \label{eq:39}
\hat w_\ell(R_0,s) - \hat w_g(R_0,s) = \frac{2\sigma}{R_0} \zeta(s). 
\end{align}
Multiplying $\overline {\hat w_{\alpha}}$ on both sides of equations \eqref{eq:40} and \eqref{eq:41}, and using \eqref{eq:21}, \eqref{eq:22} and \eqref{eq:39}, we obtain 
\begin{align*}
&\sum_{\alpha \in \{\ell,g\} }s\frac{1}{k_\alpha}\int_{I_\alpha} |w_\alpha(r)|^2dr + \frac{1}{\rho_\alpha}\int_{I_\alpha} \overline s|\partial_r \hat v_\alpha(r)|^2 - \partial_r \hat v_\alpha(r) \overline { V_\alpha^{[0]}}dr = \sum_{\alpha \in \{\ell,g\} }\int_{I_\alpha}  V_\alpha^{[1]}(r) \overline{\hat w_\alpha}(r)dr\\
& + \frac{1}{R_0 \rho_g}\hat V_g(s)\hat w _g(R_0,s) - \frac{1}{R_0 \rho_g}\hat V_\ell(s)\hat w _\ell(R_0,s)  -\frac{2\sigma}{R_0} \overline{\hat \zeta(s)} (-s \hat \zeta(s) +\zeta^{(0)}).
\end{align*}
Dividing by $\overline s$ and using Yong inequality, we have 
\begin{align*}
    \frac{1}{\rho_\alpha} \|\partial_r\hat v_\alpha(\cdot, s)\|_{L^2(I_\alpha)}  &  \le C_{\gamma, R_0}\bigg[\sum_{\alpha \in \{\ell,g\}}\left(\frac{1}{k_\alpha}\|\hat w_\alpha(\cdot,s)\|_{L^2(I_\alpha)} + \frac{1}{\rho_\alpha} \|\hat V_\alpha\|_{L_\xi^2(\mathbb R)}\right)\\
    &\qquad\qquad+ \|\hat \zeta \|_{L_\xi^2(\mathbb R)} + \left| I \right|_{\mathcal X^{(0)}_0}\bigg].
\end{align*}
Together with the validity of \eqref{eq:52} in the case $m=1$, the above estimate
implies \eqref{eq:53} for $m=1$. The case $m\ge 2$ follows by the same argument.
\end{proof}

\begin{remark}[Uniform estimate for the reduction]\label{rem:uniform-estimate-reduction}
Let $m \ge 1$, and let $R_0, \rho_{\ell,*},c_{\ell,*}, \sigma_*  >0$.
Assume that $\rho_\ell\ge\rho_{\ell,*}>0,\;
c_\ell\ge c_{\ell,*}>0$, $0<\sigma\le \sigma_*$ and that $c_{g}$ is sufficiently large.
In the proof of Theorem \ref{thm:wellposed-B}, it was shown that the estimate
\begin{align} \label{eq:54}
 &\left\|\frac{d}{dt} \eta^{(0)}\right\|_{H^m_\gamma}
+  \|\eta^{(0)}\|_{H^{m}_\gamma} \le
C_{\gamma,R_0}\|\mathcal I^{(0)}\|_{\mathcal X^{(0)}_m},\qquad \mathrm{for}\; \gamma > \widetilde \gamma_*,
\end{align}
We now
introduce a subset $\widetilde{\mathcal X}^{(0)}_m \subset \mathcal X^{(0)}_m$:
\begin{align*}
  \widetilde{\mathcal X}^{(0)}_m  := \left\{\mathcal I^{(0)} \in \mathcal X^{(0)}_m: E_{j_1} = 0,\; V^{[j_2]}_\ell = V^{[j_2]}_g = 0,\;  1\le j_1 \le m + 1,\; 2\le j_2 \le m + 1\right\}. 
\end{align*}
For initial data in $\widetilde{\mathcal X}^{(0)}_m$, since the constants in the estimates
\eqref{eq:30} and \eqref{eq:31}, together with the multiplier estimate, are
uniform with respect to $c_\alpha$ and $c_\alpha$ $(\alpha \in \{\ell,g\})$. Therefore, by repeating the
argument used in the proof of \eqref{eq:54}, we obtain the same
estimate with a constant independent of $c_\alpha$, $\rho_\alpha$ and $\sigma$, as
$c_g,c_\ell\to\infty$. Based on this fact, using the estimates 
\eqref{eq:30}, together with uniform boundness of $C_\ell(s)$ with respect to $c_\ell$, and applying \eqref{eq:55}, we obtain that 
\begin{align*}
\|v_\ell(R_0,\cdot)\|_{H^{m}_\gamma} \le
C_{\gamma,R_0}\|\mathcal I^{(0)}\|_{\mathcal X^{(0)}_{m+2}},\qquad \mathrm{for}\; \gamma > \gamma_*.   
\end{align*}
Here, $\gamma_*:=\gamma_*(R_0, \rho_{\ell,*},c_{\ell,*},\sigma_*)>0$. Combining this with the condition 
\begin{align*}
  \partial^j_t v_\ell(R_0,t) - \partial^j_t v_g(R_0,t) = \frac{2\sigma}{R_0} \partial^j_t \eta^{(0)}(t), \quad 0 \le j \le m,
\end{align*}
we obtain 
\begin{align*}
\|v_g(R_0,\cdot)\|_{H^{m}_\gamma} \le
C_{\gamma,R_0}\|\mathcal I^{(0)}\|_{\mathcal X^{(0)}_{m+2}},\qquad \mathrm{for}\; \gamma > \gamma_*.   
\end{align*}
This uniform estimate will be used in the subsequent reduction argument. 
\end{remark}

\subsection{Semigroup well-posedness on the orthogonal complement}
\label{sec:wellposdeness_2}

This section is devoted to proving the well-posedness of the component orthogonal to the
spherically symmetric mode. Throughout this subsection, we use the modal
notation introduced in Subsection~\ref{subsec:spherical-harmonics}, and all bulk
and surface variables are restricted to the \(\Pi_{\ge1}\)-sector. 
For $s\ge 0$,
we define
\begin{align*}
H_{\rm bulk}^{s,\ge1} := \left\{u\in H^s(\Omega_\ell^{(0)})\times H^s(\Omega_g^{(0)}):  \Pi_0 u = 0\right\},
\end{align*}
and the interface spaces
\begin{align*}
H^{(s)}_{\mathrm{sur}}
  := \left\{\eta \in H^s(S^{(0)}): \Pi_0 \eta = 0\right\}.
\end{align*}
We further define the Hilbert space
\begin{align}
\mathcal O_{\ge 1}:= \Big(H_{\mathrm{bulk}}^{(1,\ge 1)}\times H^{0,\ge 1}_{\mathrm{bulk}} \Big)
     \times \Big(H^{(1,\ge 1)}_{\mathrm{sur}}\times H^{(1,\ge 1)}_{\mathrm{sur}} \Big),
 \notag 
\end{align}
and equip it with the inner product
\begin{align}
   &\left\langle U^{(1)}, U^{(2)}\right\rangle_{\mathcal O_{\ge 1}}
 :=\; 
 \sum_{\alpha\in\{\ell,g\}}\int_{\Omega_\alpha^{(0)}}
      \left(
        \frac{1}{K_\alpha}q^{(1)}_\alpha \overline{q_\alpha^{(2)}} 
       + \frac{1}{\rho_{\alpha}}\left[\nabla p^{(1)}_\alpha \cdot \overline{\nabla p^{(2)}_\alpha} + p^{(1)}_\alpha \overline{p^{(2)}_\alpha} \right]
      \right)(x)\,dx \notag
   \\
  & +
     \left(\frac{-2\sigma}{R^{2}_{0}}\left\langle\zeta^{(1)}, \zeta^{(2)}\right\rangle_{L^2\left(S^{(0)}\right)} + \sigma \left\langle\nabla_{S^{(0)}}\zeta^{(1)}, \nabla_{S^{(0)}}\zeta^{(2)}\right\rangle_{L^2\left(S^{(0)}\right)} + \left\langle \eta^{(1)}, \eta^{(2)}\right\rangle_{H^1\left(S^{(0)}\right)}\right), \notag
\end{align}
where $U^{(l)}:=\big(p^{(l)},q^{(l)},\eta^{(l)},\zeta^{(l)})$ for $l = 1,2$.
Define  
\begin{align*}
\mathcal H_{\ge 1}^0:=\left\{(p,q,\eta,\zeta) \in \mathcal O_{\ge 1}:\Pi_{1}[p] = 0,\;
\left(\Delta_{S^{(0)}}+\frac{2}{R_{0}^2}\right)^{-1}\Pi_{\ge 2}[p] = \sigma\Pi_{\ge 2} \eta\; \mathrm{on}\; S^{(0)} \right\}.
\end{align*}
Now, we define the state space $\mathcal H_{\ge 1}$ as the completion of $\mathcal H^0_{\ge 1}$ with respect to the norm induced by the inner product $\langle\cdot,\cdot\rangle_{\mathcal O_{\ge 1}}$.

For the bulk operators, let 
\begin{align}
  L_\alpha p_\alpha
  &:= K_\alpha\,\nabla\cdot\Big(
        \frac{1}{\rho_{\alpha}}\nabla p_\alpha\Big),
    \qquad \mathrm{in}\;\Omega_\alpha^{(0)},\quad \alpha \in \{\ell,g\}, \notag 
\end{align}
and let $L$ denote the corresponding piecewise operator. 

We now define the operator $\mathcal A^{\perp}:D(\mathcal A^{\perp})\subset \mathcal H_{\ge 1}\to\mathcal H_{\ge 1}$ by 
\begin{align*}
\mathscr A_{\ge1}
\begin{pmatrix}
p\\ q\\ \eta\\ \zeta
\end{pmatrix}
:=
\begin{pmatrix}
q\\
Lp\\
\zeta\\
-\partial_n p
\end{pmatrix}.
\end{align*}
The domain \(\mathcal D(\mathscr A_{\ge1})\) consists of all
\(U=(p,q,\eta,\zeta)\in\mathcal H_{\ge1}\) such that
\begin{enumerate}[(1)]
\item
\begin{align*}
Lp\in H_{\rm bulk}^{0,\ge1},
\qquad
q\in H_{\rm bulk}^{1,\ge1},
\qquad
\eta,\zeta\in H_{\rm sur}^{5/2,\ge1},
\end{align*}
\item the \emph{jump condition} holds:
\begin{align}
&\Pi_{1}[q] = 0, \label{eq:jump-split1} \\
&\left(\Delta_{S^{(0)}}+\frac{2}{R_{0}^2}\right)^{-1}\Pi_{\ge 2}[q] = \sigma \Pi_{\ge 2} \zeta, \label{eq:jump-split2}
\end{align}
\item and the \emph{flux condition} holds:
\begin{align*}
  \frac{1}{\rho_{\ell}}\partial_{n} p_\ell
  = \frac{1}{\rho_{g}}\partial_{n} p_{g}
  \quad\text{on }S^{(0)}, \qquad  \partial_n p \in H_{\rm sur}^{1,\ge1}.
\end{align*}
\end{enumerate}
With this notation, the $\Pi_{\ge1}$-component of the pressure--interface
system is written as
\begin{align}\label{eq:A-pt}
\partial_t U=\mathscr A_{\ge1}U .
\end{align}

Now we state the well-posedness result in the orthogonal complement.

\begin{theorem}[Well-posedness of the orthogonal complement]
\label{thm:wellposed-A}
We have the following arguments.
\begin{enumerate}[(a)] 
\item \label{c1} The operator $\mathcal A^{\perp}$ with domain $D(\mathcal A^{\perp})$ generates
a $C^0$-semigroup $\{\mathcal S(t)\}_{t\ge 0}$ on $\mathcal H_{\ge 1}$. For every $U_0\in\mathcal H_{\ge 1}$,
there exists a unique mild solution 
\begin{align*}
U(t) = \mathcal S(t)U_0\in C([0,\infty);\mathcal H_{\ge 1}),
\end{align*}
and for $U_0\in D(\mathcal A^{\perp})$ the solution is classical, namely, 
\begin{align*}
U\in C^1([0,\infty);\mathcal H_{\ge 1})\cap C([0,\infty);D(\mathcal A^{\perp})).
\end{align*}
\item \label{c2}
Let $U_0\in D(\mathcal A^{\perp})$.
We have the energy {conservation}
\begin{align}
&\left\|\mathcal S(t)U_0\right\|_{\mathcal E} = \|U_0\|_{\mathcal E}\qquad t\ge0. \label{eq:gronwall-step2}
\end{align}
Here,
\begin{align*}
\|U\|^2_{\mathcal E}&:= \|U\|^2_{\mathcal O_{\ge 1}} - \sum_{\alpha\in\{\ell,g\}}\frac{1}{\rho_{\alpha}} \|p_\alpha\|^2_{L^2\left(\Omega_\alpha^{(0)}\right)} - \left\|\eta\right\|^2_{H^1\left(S^{(0)}\right)}.
\end{align*}
\end{enumerate}
\end{theorem}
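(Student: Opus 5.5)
The plan is to apply the Lumer--Phillips theorem in its skew-adjoint form. We show that $\mathscr A_{\ge1}$ (written $\mathcal A^{\perp}$ in the statement) is dissipative with respect to an inner product equivalent to $\langle\cdot,\cdot\rangle_{\mathcal O_{\ge1}}$. We then show that $\lambda-\mathscr A_{\ge1}$ is surjective for some $\lambda>0$. Part \eqref{c1} follows from the generation theorem together with standard semigroup facts: mild solutions for data in $\mathcal H_{\ge1}$, and classical solutions in $C^1([0,\infty);\mathcal H_{\ge1})\cap C([0,\infty);D(\mathcal A^{\perp}))$ for data in the domain.

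The central computation is the energy identity behind \eqref{c2}. Take $U=(p,q,\eta,\zeta)\in D(\mathcal A^{\perp})$ and set
\[
\mathcal E(U)=\sum_\alpha\int \tfrac1{K_\alpha}|q_\alpha|^2+\tfrac1{\rho_\alpha}|\nabla p_\alpha|^2 + (\text{surface part}).
\]
We compute $\mathrm{Re}\langle \mathscr A_{\ge1}U,U\rangle_{\mathcal E}$. Green's formula in each phase gives the bulk contribution
\[
\mathrm{Re}\int_{S^{(0)}}\Big(\tfrac1{\rho_\ell}\partial_n p_\ell\,\overline{q_\ell}-\tfrac1{\rho_g}\partial_n p_g\,\overline{q_g}\Big)=\mathrm{Re}\int_{S^{(0)}}\partial_n p\,\overline{[q]},
\]
where the flux condition has been used. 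The domain conditions \eqref{eq:jump-split1}--\eqref{eq:jump-split2} relate $[q]$ to $\sigma(\Delta_{S^{(0)}}+2/R_0^2)\zeta$ on $\mathcal Y_{\ge2}$, and $\Pi_1[q]=0$ holds on $\mathcal Y_1$. The surface quadratic form $\sigma\|\nabla_{S^{(0)}}\cdot\|^2-\frac{2\sigma}{R_0^2}\|\cdot\|^2$ is paired against $(\zeta,-\partial_n p)$. Its contribution is $-\mathrm{Re}\,\sigma\langle(-\Delta_{S^{(0)}}-2/R_0^2)\zeta,\ldots\rangle$, and after integration by parts on the sphere it cancels the boundary term exactly. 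Hence $\mathrm{Re}\langle\mathscr A U,U\rangle_{\mathcal E}=0$. Both quantities are time-continuous along classical solutions, so differentiating $\|U(t)\|_{\mathcal E}^2$ gives the conservation \eqref{eq:gronwall-step2}.

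Dissipativity in the full $\mathcal O_{\ge1}$ norm is handled as follows. The lower-order terms $\frac1{\rho_\alpha}\|p_\alpha\|^2$ and $\|\eta\|_{H^1}^2$ contribute at most $C\|U\|^2$, since their time derivatives are $2\mathrm{Re}\langle q,p\rangle$ and $2\mathrm{Re}\langle\zeta,\eta\rangle_{H^1}$. Therefore $\mathscr A_{\ge1}-\omega I$ is dissipative for some $\omega>0$. This requires the surface form to be coercive: on $\mathcal Y_{\ge2}$ we use $-\Delta_{S^{(0)}}\ge 6/R_0^2$. On $\mathcal Y_1$ the form degenerates, so the $\zeta$ component there must be controlled another way. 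Our approach is to observe that the coupling forces $\Pi_1[q]=0$. We then control $\Pi_1\zeta=-\int\partial_t^{-1}\ldots$, or more simply bound $\Pi_1\zeta$ by the bulk $H^1$ norms through the trace of $\Pi_1\partial_n p$. If necessary, we accept an equivalent renormed space in which the $\mathcal Y_1$ component of $\zeta$ carries an $L^2$ weight.

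For the range condition, given $F=(f_1,f_2,f_3,f_4)$, we solve $\lambda U-\mathscr A U=F$. Substituting $q=\lambda p-f_1$ and $\zeta=\lambda\eta-f_3$ gives a transmission problem
\[
\lambda^2 p-Lp=\tilde f \quad\text{in each phase},
\]
with common flux
\[
\partial_n p=-\lambda\zeta+f_4=-\lambda^2\eta+\lambda f_3+f_4
\]
and the jump relation between $[p]$ and $\eta$. We solve it variationally by Lax--Milgram on the coupled space of pairs $(p,\eta)$ with $p\in H^1$ in both phases and $\eta$ linked to $[p]$. The bilinear form is
\[
\lambda^2\int\tfrac1{K}p\bar\phi+\int\tfrac1\rho\nabla p\cdot\nabla\bar\phi+\lambda^2\langle\eta,\psi\rangle+\sigma\text{-form},
\]
and it is coercive for large $\lambda$ because the indefinite term $-\frac{2\sigma}{R_0^2}\|\eta\|^2$ is absorbed by $\lambda^2\|\eta\|^2$. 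We then recover the regularity required by $D(\mathcal A^{\perp})$ ($Lp\in L^2$, flux in $H^1$, $\eta\in H^{5/2}$) from elliptic regularity for the transmission problem on the sphere, working mode by mode or through a Dirichlet-to-Neumann argument.

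We expect the main obstacle to be the translation sector $\mathcal Y_1$. There the surface energy vanishes, and the constraint $\Pi_1[p]=0$ must be shown to be preserved by the dynamics and compatible with the Lax--Milgram space. The first step will be to decouple $\mathcal Y_1$ explicitly in spherical harmonics, since it is a three-dimensional invariant subsystem that can be handled by hand.
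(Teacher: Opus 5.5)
Your skeleton is the same as the paper's. You use Lumer--Phillips with a Lax--Milgram range argument. For (b) you use the same Green-formula computation: the bulk flux term is $-\mathrm{Re}\int_{S^{(0)}}\partial_n p\,\overline{[q]}$ (the liquid's outward normal is $-n$), and via the jump condition it cancels against the derivative of $\sigma\|\nabla_{S^{(0)}}\zeta\|^2-\frac{2\sigma}{R_0^2}\|\zeta\|^2$. You are also right, and more careful than the paper (which asserts $\mathrm{Re}\langle\mathscr A_{\ge1}U,U\rangle=0$), on two points: the lower-order terms only give quasi-dissipativity, and the surface form degenerates on $\mathcal Y_1$. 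Indeed $U=(0,0,0,Y_1^m(\cdot/R_0))$ is admissible and has zero $\mathcal O_{\ge1}$-``norm''.

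However, your proposal does not close the $\mathcal Y_1$ point. $\Pi_1\zeta$ is an independent coordinate of the state, as that example shows, so it cannot be bounded by bulk $H^1$ norms. Moreover $\Pi_1\partial_n p$ is a Neumann trace, which $H^1$ norms do not control. A bare weight $|\Pi_1\zeta|^2$ gives a norm but not quasi-dissipativity, because $\mathrm{Re}\langle-\Pi_1\partial_n p,\Pi_1\zeta\rangle$ is unbounded relative to it. The missing idea is to test with the coordinate functions $x_j$. They are harmonic in $B^{(0)}$ and restrict to $R_0\hat x_j\in\mathcal Y_1$ on $S^{(0)}$. Green's formula in the gas, together with $Lp_g=\partial_t q_g$, gives
\[
\frac{d}{dt}\Big(\langle\zeta,x_j\rangle_{L^2(S^{(0)})}+\frac1{K_g}\int_{B^{(0)}}q_g\,x_j\,dx\Big)=-\frac1{\rho_g}\int_{B^{(0)}}\partial_{x_j}p_g\,dx .
\]
The right-hand side is bounded by the energy. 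Using these corrected coordinates as the $\mathcal Y_1$ weight therefore gives an equivalent norm in which $\mathscr A_{\ge1}-\omega$ is dissipative. Equivalently, $(\Pi_1\eta,\Pi_1\zeta)$ never feed back into the other unknowns, so you can recover them by quadrature, bounded via the same identity. Note also that the $l=1$ sector is not three-dimensional: it contains the $l=1$ bulk pressure fields, and only the interface amplitudes are finite-dimensional.

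The range step has a matching flaw. Suppose the jump relation is imposed on both trial and test pairs, with $[\phi]=\sigma(\Delta_{S^{(0)}}+2/R_0^2)\psi$. Inserting $\partial_n p=-\lambda^2\eta+\lambda f_\eta+f_\zeta$ into $\int_{S^{(0)}}\partial_n p\,\overline{[\phi]}$ then produces $\lambda^2\sigma\big(\langle\nabla_{S^{(0)}}\eta,\nabla_{S^{(0)}}\psi\rangle-\frac{2}{R_0^2}\langle\eta,\psi\rangle\big)$. It does not produce $\lambda^2\langle\eta,\psi\rangle$ plus a separate $\sigma$-form, so your absorption argument has no term to work with. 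Nothing needs absorbing on $\mathcal Y_{\ge2}$, where this form is already coercive. On $\Pi_1\eta$ the form vanishes identically, so Lax--Milgram on pairs $(p,\eta)$ fails exactly in that direction. The paper avoids this by eliminating $\Pi_{\ge2}\eta$ through $(\Delta_{S^{(0)}}+2/R_0^2)^{-1}$ and solving for $p$ alone in $\{p:\Pi_1[p]=0\}$. There $-\frac{\lambda^2}{\sigma}\langle(\Delta_{S^{(0)}}+2/R_0^2)^{-1}\Pi_{\ge2}[p],[p]\rangle\ge0$, which gives coercivity. The $\mathcal Y_1$ interface unknowns are then read off afterwards from $\lambda\zeta+\partial_n p=f_\zeta$ and $\zeta=\lambda\eta-f_\eta$.
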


\begin{proof}
\eqref{c1}
We verify the hypotheses of the Lumer--Phillips theorem in two steps;  see, for instance, \cite{Pazy1983}.

\smallskip\noindent
\emph{Step 1: Dissipativity.}
For each $U \in D(\mathcal A^{\perp})$, a straightforward calculation gives 
\begin{align}
\langle \mathcal A^{\perp} U, &U\rangle_{\mathcal H_{\ge 1}}
:=\;\sum_{\alpha\in\{\ell,g\}}\int_{\Omega_\alpha^{(0)}}
   \left(
    \nabla\cdot\bigg(
        \frac{1}{\rho_{\alpha}}\nabla p_\alpha\bigg)\overline q_\alpha
       + \frac{1}{\rho_{\alpha}}\nabla q_\alpha\overline{\nabla p_\alpha} + \frac{1}{\rho_{\alpha}}q_\alpha \overline p_\alpha
      \right)(x)\,dx \notag
   \\
 + &\Big(\int_{S^{(0)}}\left(\zeta \overline{\eta} + \nabla_{S^{(0)}}{\zeta} \nabla_{S^{(0)}}\overline{\eta}\right)(x) dS(x) \notag\\
 & + \frac{\sigma}{R^{2}_{0}} \int_{S^{(0)}}\left(\partial_{n}p\overline{\zeta}\right)(x) dS(x)\Big) - \sigma \int_{S^{(0)}}\left(\nabla_{S^{(0)}}\partial_{n}p\overline{\nabla_{S^{(0)}} \zeta}\right)(x) dS(x)\Big). \label{eq:1}
\end{align}
With the aid of the jump conditions \eqref{eq:jump-split1} and \eqref{eq:jump-split2}, by integrating by parts, we have that 
\begin{align}
&\mathrm{Re} \Bigg[ \sum_{\alpha\in\{\ell,g\}}\int_{\Omega_\alpha^{(0)}}
      \left(
        \nabla\cdot\bigg(
        \frac{1}{\rho_{\alpha}}\nabla p_\alpha\bigg)\overline q_\alpha
       + \frac{1}{\rho_{\alpha}}\nabla q_\alpha\overline{\nabla p_\alpha}
      \right)(x)\,dx 
    +\frac{\sigma}{R^{2}_{0}}\int_{S^{(0)}}\left(\partial_{n}p\overline{\zeta}\right)(x) dS(x)\Bigg] \notag \\
&= \mathrm{Re}\left[ \int_{S^{(0)}}\left(\partial_{n}p\overline{(q_{g}-q_\ell)} + \frac{\sigma}{R^{2}_{0}}\partial_{n}p\overline{\zeta}\right)(x)dS(x)\right] = \mathrm{Re}\Bigg[ - \sigma \int_{S^{(0)}_j}\left(\partial_{n}p\overline{\Delta_{S^{(0)}} \zeta}\right)(x) dS(x)\Bigg], \label{eq:2}
\end{align}
and that 
\begin{align*}
-\int_{S^{(0)}}\left(\nabla_{S^{(0)}}\partial_{n}p\overline{\nabla_{S^{(0)}}\zeta}\right)(x) dS(x)  =
\int_{S^{(0)}}\left(\partial_{n}p\overline{\Delta_{S^{(0)}} \zeta}\right)(x) dS(x).
\end{align*}
Moreover, by Cauchy-Schwartz inequality, we have 
{
\begin{align*}
&\left|\int_{S^{(0)}}\left(\zeta \overline \eta + \nabla_{S^{(0)}}{\zeta} \overline{\nabla_{S^{(0)}}\eta}\right)(x) dS(x)\right|
\le \|\zeta\|_{H^1(S^{(0)})}\|\eta\|_{H^1(S^{(0)})}.
\end{align*}
}
In conjunction with \eqref{eq:1} and \eqref{eq:2}, we arrive at 
\begin{align}
\mathrm{Re}(\langle \mathcal A^{\perp} U, U\rangle) = 0, \qquad \mathrm{for}\; U \in D(\mathcal A^{\perp}).  \notag
\end{align}

\smallskip\noindent
\emph{Step 2: Maximality (range condition).}
In this step, we aim to find $\lambda > 0$, such that 
\begin{align*}
\mathrm{Ran}(\lambda \mathbb I - \mathcal A^{\perp}) = \mathcal H_{\ge 1},
\end{align*}
that is, given $F=(f_p,f_q,f_\eta, f_\zeta)\in\mathcal H_{\ge 1}$, there exists $U=(p,q,\eta,\zeta)\in D(\mathcal A^{\perp})$ satisfying 
\begin{align}
&q=\lambda p-f_p, \qquad (\lambda^2 \mathbb I-L)p = f_q+\lambda f_p 
\quad\text{in } \mathbb R^3 \setminus S^{(0)},
\label{eq:res-q-bulk} \\
&\mathrm{and}\;\zeta = \lambda \eta - f_\eta, \qquad \lambda^2 \eta + \partial_{n} p = f_\zeta + \lambda f_\eta\quad \mathrm{in}\; S^{(0)}. \label{eq:res-q-interface}
\end{align}

\smallskip\noindent
\underline{Variational formulation.} 
Let 
\begin{align*}
H_{\mathrm{bulk}}^{\mathrm{tr}}:=\left\{p\in H_{\mathrm{bulk}}^{(1,\ge 1)}: \Pi_{1}[p] = 0\; \mathrm{on}\; S^{(0)}\right\},
\end{align*}
and define 
$a_{\lambda}:H_{\mathrm{bulk}}^{\mathrm{tr}}\times H_{\mathrm{bulk}}^{\mathrm{tr}}\to\mathbb{C}$  by
\begin{align*}
a_{\lambda}(p,\phi):={} & \sum_{\alpha\in\{\ell,g\}}\int_{\Omega_\alpha^{(0)}}\Big(\frac{\lambda^2}{K_\alpha}p_\alpha\overline{\phi_\alpha}
+\frac{1}{\rho_{\alpha}}\nabla p_\alpha\cdot \overline{\nabla\phi_\alpha}\Big)(x)\,dx \\
& - \frac{\lambda^2}{\sigma}
\left\langle
\left(\Delta_{S^{(0)}}+\frac{2}{R_{0}^2}\right)^{-1}\Pi_{\ge 2}[p]
\bigr),
[\phi]
\right\rangle_{L^2(S^{(0)})}.
\end{align*}
Since
\begin{align} \label{eq:harmonic}
(\Delta_{\mathbb S^2} + 2) Y_l^m = (l+2)(1-l)Y_l^m, \quad \mathrm{for}\; l\ge 2,
\end{align}
it can be seen that $a_\lambda$ is coercive in $H_{\mathrm{bulk}}^{\mathrm{tr}}$. 
Since $f_p \in H^{\mathrm{rm}}_{\mathrm{bulk}}$, $f_q \in H^{\mathrm{tr}}_{\mathrm{bulk}}$, $f_\eta \in H_{\mathrm{sur}}^{(1,\ge 1)}$ and $f_\zeta \in H^{(1,\ge 1)}_{\mathrm{sur}}$, we can find a unique $F \in \left(H^{\mathrm{rm}}_{\mathrm{bulk}}\right)'$ satisfying
\begin{align*}
F(\phi) = \left\langle \widetilde f_\zeta , [\phi]\right\rangle_{L^2(S^{(0)})} +  \left\langle f_q + \lambda f_p,\phi\right\rangle_{H_{\mathrm{bulk}}}, \quad {\phi} \in H^{\mathrm{tr}}_{\mathrm{bulk}}. 
\end{align*}
Here, 
\begin{align} \label{eq:15}
\widetilde f_\zeta: = f_\zeta + \frac{1}{\sigma}\left(\Delta_{S^{(0)}} + \frac{2}{R_{0}^2}\right)^{-1}\Pi_{\ge 2}[f_p] \quad \mathrm{on}\; S^{(0)}.
\end{align}
By Lax-Milgram Theorm, there exists $\lambda > 0$ such that there exists a unique $p\in H_{\mathrm{bulk}}^{\mathrm{tr}}$ satisfying
\begin{align} \label{eq:3}
a_\lambda(p,\phi) = F(\phi), \qquad \phi \in H^{\mathrm{tr}}_{\mathrm{bulk}}.
\end{align}
Then, choosing any $\phi_\ell \in H^1_0(\Omega_\ell)$ and $\phi_g = 0$, we have 
\begin{align*}
&\int_{\Omega_\ell^{(0)}}\Big(\frac{\lambda^2}{K_\ell}p_\ell \overline{\phi_\ell}+\frac{1}{\rho_{\ell}}\nabla
    p_\ell\cdot\overline{\nabla\phi_\ell}\Big)(x)\,dx = \int_{\Omega_\ell^{(0)}} \left((f_q + \lambda f_p)\overline{\phi_\ell}\right)(x) dx,\quad  \mathrm{for}\; \phi_\ell \in H^1_0(\Omega_\ell).
\end{align*}
Therefore, we obtain that $\partial_{n} p_\ell \in H^{-1/2}(S^{(0)})$ and that 
\begin{align}
&\int_{\Omega_\ell^{(0)}}\Big(\frac{\lambda^2}{K_\ell}p_\ell \overline{\phi_\ell} +\frac{1}{\rho_{\ell}}\nabla
p_\ell\cdot \overline{\nabla\phi_\ell}\Big)(x)\,dx  = \int_{\Omega_\ell^{(0)}} \left((f_q + \lambda f_p)\overline{\phi_\ell}\right)(x) dx \notag \\
& - \int_{S^{(0)}} \frac{1}{\rho_\ell}\left(\partial_{n} p_\ell \overline{\phi_\ell}\right)(x) dS(x),\quad  \mathrm{for}\; \phi_\ell \in H^1(\Omega_\ell).\label{eq:12}
\end{align}
Similarly, we have that $\partial_{n} p_{g} \in H^{-1/2}(S^{(0)})$ and that 
\begin{align}
&\int_{\Omega_g^{(0)}}\Big(\frac{\lambda^2}{K_{g}}p_{g}\overline{\phi_{g}} +\frac{1}{\rho_{g}}\nabla p_{g}\cdot\overline{\nabla\phi_{g}}\Big)(x) dx = \int_{B^{(0)}}\left((f_q + \lambda f_p)\overline{\phi_{g}}\right)(x) dx \notag \\
& + \int_{S^{(0)}} \frac{1}{\rho_{g}}\left(\partial_{n} p_{g}\overline{ \phi_{g}}\right)(x) dS(x),\quad  \mathrm{for}\; \phi_{g} \in H^1(S^{(0)}). \label{eq:13}
\end{align}
Combining \eqref{eq:3}, \eqref{eq:12} and \eqref{eq:13} gives
\begin{align}
&-\frac{\lambda^2}\sigma \left\langle\left(\Delta_{S^{(0)}}+\frac{2}{R_{0}^2}\right)^{-1} \Pi_{\ge 2}[p], [\phi]\right\rangle_{L^2(S^{(0)})} \notag\\
&= 
\int_{S^{(0)}} \frac{1}{\rho_\ell} \left(\partial_{n} p_\ell \overline{\phi_\ell} \right)(x) - \frac{1}{\rho_{g}}\left(\partial_{n} p_{g} \overline{\phi_{g}}\right)(x) dS(x) - \left\langle\widetilde f_\zeta , [\phi]\right\rangle_{L^2(S^{(0)})}. \notag
\end{align}
From this, we have 
\begin{align}
&\frac{1}{\rho_\ell}\partial_{n} p_\ell = \frac{1}{\rho_{g}}\partial_{n} p_{g}, \notag \\
& \frac{\lambda^2}\sigma \left(\Delta_{S^{(0)}}+\frac{2}{R_{0}^2}\right)^{-1}\Pi_{\ge 2}[p] = -\Pi_{\ge 2}(\partial_{n} p - \widetilde f_\zeta)\quad \mathrm{on}\; S^{(0)}. \label{eq:4}
\end{align}

Now we recover remaining components from $p$. Set 
\begin{align}
    \zeta := \frac{\lambda}{\sigma}\left(\Delta_{S^{(0)}}+\frac{2}{R_{0}^2}\right)^{-1}\Pi_{\ge 2}\left([p]- \frac{1}\lambda [f_p]\right) 
    \label{eq:6}
\end{align}
and
\begin{align}
\eta :=  
\frac{1}{\lambda} \zeta + \frac{1}{\lambda}f_\eta.
\label{eq:7}
\end{align}
Combing \eqref{eq:6}, \eqref{eq:7} and the fact that $(f_p,f_q,f_\eta, f_\zeta) \in \mathcal H_{\ge 1}$ gives
\begin{align}\label{eq:16}
\frac{1}{\sigma}\left(\Delta_{S^{(0)}} + \frac{2}{R_{0}^2}\right)^{-1}\Pi_{\ge 2}[p] = \Pi_{\ge 2} \eta.
\end{align}
Using \eqref{eq:15}, \eqref{eq:4} and \eqref{eq:6}, we obtain 
\begin{align*}
 \lambda \zeta =  -(\partial_{n} p - f_\zeta) \quad \mathrm{on}\; S^{(0)}.
\end{align*}
In conjunction with \eqref{eq:6} and \eqref{eq:7}, we arrive at 
\begin{align*}
&\zeta = \lambda \eta - f_\eta,\\
&\lambda^2 \eta + \partial_{n} p= f_\zeta + \lambda f_\eta.
\end{align*}
Set
\begin{align} \label{eq:8}
q = \lambda p - f_p.
\end{align}
This, together with \eqref{eq:4} and \eqref{eq:6} gives
\begin{align*}
\frac{1}{\sigma}\left(\Delta_{S^{(0)}}+\frac{2}{R_{0}^2}\right)^{-1}\Pi_{\ge 2}[q] = \Pi_{\ge 2} \zeta.
\end{align*}
Moreover, building upon \eqref{eq:6}, \eqref{eq:7}, \eqref{eq:16}, \eqref{eq:8},
and the regularity properties of elliptic operators, we obtain that 
$U:=(p,q,\eta,\zeta) \in D(\mathcal A^{\perp})$ and that $U$ satisfies \eqref{eq:res-q-bulk} and \eqref{eq:res-q-interface}.
Thus $(p,q,\eta,\zeta)$ is the desired solution, and the range condition is therefore satisfied.

\eqref{c2}
Let $U(t):= \mathcal S(t)U_0$. It follows from statement \eqref{a1} of this lemma that there exists a unique $U(t) \in C^1([0,\infty);\mathcal H_{\ge 1})\cap C([0,\infty);D(\mathcal A^{\perp}))$ 
satisfying
\begin{align*}
\partial_t U = A^{\perp}U\quad \mathrm{and} \quad U(0)=U \in D(\mathcal A^{\perp}).
\end{align*} 

We first calculate $\frac12\frac{d}{dt}\|U(t)\|_{\mathcal E}^2$.

\smallskip\noindent
\underline{Bulk part.}
Since we have 
\begin{align*}
\partial_t p_{\alpha} = q_\alpha \quad \mathrm{and}\quad \partial_t q_{\alpha}= L_\alpha p_\alpha.
\end{align*}
Therefore, we arrive at 
\begin{align}
&\sum_{\alpha\in\{\ell,g\}}\frac{d}{dt}\int_{\Omega_\alpha^{(0)}}\Big(\frac{1}{K_\alpha}|q_\alpha|^2+\frac{1}{\rho_{\alpha}}|\nabla p_\alpha|^2\Big)(x,t)\,dx \notag \\
&= \sum_{\alpha\in\{\ell,g\}} 2 \mathrm{Re}\left[\int_{\Omega_\alpha^{(0)}}\Big(\frac{1}{K_\alpha}\overline{q_\alpha}{q_{\alpha,t}}
+\frac{1}{\rho_{\alpha}}\nabla p_\alpha\cdot \overline{\nabla p_{\alpha,t}}\Big)(x,t)\,dx\right] \notag \\
& = \sum_{\alpha\in\{\ell,g\}}2\mathrm{Re}\left[\int_{\Omega_\ell^{(0)}}\Big(\overline{q_\ell}\,\nabla\!\cdot\!\left(\frac{1}{\rho_{\ell}}\nabla p_\ell\right)
+ \frac{1}{\rho_{\ell}}\nabla p_\ell\cdot \overline{\nabla q_\ell}\Big)(x,t)\,dx\right] \notag \\
& = -2\mathrm{Re}\left[\int_{S^{(0)}} \frac{1}{\rho_\ell}\left(\partial_n p_{\ell}\,\overline{[q]}\right)(x,t)\,dS(x)\right]
\label{eq:bulk-interface-flux-step1}.
\end{align}
Furthermore, using 
\begin{align}
&\frac{1}{\rho_{\ell}}\partial_{n}p_\ell(t) = - \partial_t\zeta(t)\quad \mathrm{and} \notag \\
&\frac 1{\sigma}\left(\Delta_{S^{(0)}}+\frac{2}{R_{0}^2}\right)^{-1}\left(\Pi_{\ge 2}\left[q(t)\right]\right) = \Pi_{\ge 2} \zeta(t) \quad \mathrm{on}\;S^{(0)}, \notag
\end{align}
we obtain that 
\begin{align}
-\int_{S^{(0)}}\left(\frac{1}{\rho_{\ell}}\partial_{n}p_\ell\,\overline{[q]}\right)(x,t)\,dS(x) & = -{\sigma}\int_{S^{(0)}} \left(\overline{\nabla_{S^{(0)}}\zeta} \cdot \nabla_{S^{(0)}} \partial_t\zeta\right)(x,t) dS(x) \notag  \\
& + \frac{2\sigma}{R^2_{0}}\int_{S^{(0)}} \frac{1}{\rho_{\ell}}\left(\partial_{n}p_{\ell}\overline{\Pi_{\ge 2}\zeta}\right)(x,t) \,dS(x). \label{eq:9}
\end{align}

\smallskip\noindent
\underline{Surface part (high modes).}
We note that 
\begin{align*}
\partial_t \eta = \zeta\quad \mathrm{on}\quad S^{(0)},
\end{align*}
and that \eqref{eq:8} holds.
Therefore, a straightforward calcultaion gives
\begin{align}
&\frac{d}{dt}
\left\|\zeta\right\|_{L^2\left(S^{(0)}\right)}^2 = 2\mathrm{Re}\left[\int_{S^{(0)}} \frac{1}{\rho_\ell}\left(\partial_{n}p_{\ell} \overline{\Pi_{\ge 2} \zeta}\right)(x,t) \,dS(x)\right],
\label{eq:17}\\
&\frac{d}{dt} \left\|\nabla_{S^{(0)}}\zeta\right\|_{L^2\left(S^{(0)}\right)}^2 = 2 \mathrm{Re}\left[\int_{S^{(0)}} \left(\overline{\nabla_{S^{(0)}}\zeta} \cdot \nabla_{S^{(0)}} \partial_t\zeta\right)(x,t) dS(x)\right].\label{eq:10}
\end{align}
Combining \eqref{eq:bulk-interface-flux-step1},\eqref{eq:9}, \eqref{eq:17} and \eqref{eq:10} gives
\eqref{eq:gronwall-step2}.
\end{proof}

\section{Reduced models and asymptotic regimes} \label{sec:reduced-models}

In this section, building on the well-posedness theory developed in Section \ref{sec:wellposedness}, we discuss several reduced models and asymptotic regimes of
the linearized fluid--bubble system. We first relate the present model, in a
suitable asymptotic regime, to the classical linearized Rayleigh--Plesset equation and to
its Lamb-type generalizations; see Theorem \ref{th:reduct_1}. We then consider a small-\(\sigma\) regime leading
to a frozen-interface transmission model; see Theorem \ref{prop:sigma-to-zero}. These two reductions
are presented in the following two subsections. 

Throughout this section, we use the modal projections introduced in
Subsection~\ref{subsec:spherical-harmonics}. The initial datum is denoted by
\begin{align*}
\mathcal I
:=
\left(
p_{\ell,0},p_{g,0},q_{\ell,0},q_{g,0},\eta_0,\zeta_0
\right).
\end{align*}
We decompose it into its spherically symmetric component and its orthogonal
component:
\begin{align*}
\mathcal I = \mathcal I^{(0)}+\mathcal I^{\perp}.
\end{align*}

\subsection{Relation to the classical one-bubble radial models}
We consider the large-wave-speed regime. In this regime, the present
model reduces to the linearized classical one-bubble models, including the
Rayleigh--Plesset equation and its Lamb-type generalizations.
Before proving this, we establish two technical lemmas, whose proofs are deferred to the Appendix \ref{sec:B}.

\begin{lemma} \label{le:2}
Let $p$ be a solution of \eqref{eq:bulk-wave-liquid-final}. Assume that $p\in C^1([0,\infty); H^1(\Omega^{(0)}_\ell \cup \Omega^{(0)}_g))\cap C([0,\infty);H^1_\Delta(\Omega^{(0)}_\ell \cup \Omega^{(0)}_g))$
Here, 
\begin{align*}
H^1_\Delta(\Omega^{(0)}_\ell \cup \Omega^{(0)}_g):= \{H^1(\Omega^{(0)}_\ell \cup \Omega^{(0)}_g)): \Delta p \in L^2(\Omega^{(0)}_\ell \cup \Omega^{(0)}_g))\}.
\end{align*}
We have that 
\begin{align} \label{eq:1-LP}
p(x,t) & = \int_{\mathbb R}\int_{S^{(0)}}\partial_\nu G_{\ell}(x-y,t-s) p_\ell(y,s) - G_{\ell}(x-y,t-s) \partial_\nu p_\ell(y,s) dS(y) ds \notag\\
+ & \frac{1}{c_\ell^2}\left[I^{\rm in}_{\ell}q_{\ell,0} - \partial_t I^{\rm in}_{\ell}p_{\ell,0}\right], \quad x \in \Omega^{(0)}_\ell, \; t \in \mathbb R_+,
\end{align}
and that
\begin{align} \label{eq:2-LP}
p(x,t) & = \int_{\mathbb R}\int_{S^{(0)}}G_{g}(x-y,t-s)\partial_\nu p_g(y,s) - \partial_\nu G_{g}(x-y,t-s) p_g(y,s) dS(y) ds \notag \\
& + \frac{1}{c_g^2}\left[I^{\rm in}_{g}q_{g,0} - \partial_t I^{\rm in}_{g}p_{g,0}\right], \quad x \in \Omega^{(0)}_g,\; t\in \mathbb R_+.
\end{align}
Here, $G_{\alpha}(x,\tau)$ is defined by 
\begin{align*}
G_\alpha(y,t) := \frac{\delta_0(t-c_\alpha^{-1}|x|)}{4\pi|x|}\quad \mathrm{in}\; \mathbb R^3\times \mathbb R_+
\end{align*}
with $\delta_0$ being the Dirac delta distribution, and
\begin{align*}
I^{\rm in}_\alpha h_\alpha(x,t)
:=
\frac{1}{4\pi t}
\int_{\Omega_{\alpha}\cap \{|x-y|=c_\alpha t\}} h_\alpha(y)\,dS(y), \quad\; h_{\alpha} \in H^{(1)}(\Omega_\alpha),\quad \alpha\in \{\ell,g\}.
\end{align*}
\end{lemma}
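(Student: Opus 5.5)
The plan is to derive both representations from Green's second identity in space–time, i.e.\ the retarded Kirchhoff formula for the wave operator $\partial_t^2-c_\alpha^2\Delta$ with $c_\alpha^2=K_\alpha/\rho_\alpha$. In each phase $\Omega^{(0)}_\alpha$ the equation \eqref{eq:bulk-wave-liquid-final} reads $\partial_t^2p_\alpha-c_\alpha^2\Delta p_\alpha=0$, since $\rho_\alpha$ is constant. We extend $p_\alpha$ by zero outside $\Omega^{(0)}_\alpha\times(0,\infty)$, writing $\widetilde p_\alpha:=\chi_{\Omega_\alpha}\chi_{t>0}\,p_\alpha$. We then compute $(\partial_t^2-c_\alpha^2\Delta)\widetilde p_\alpha$ in the sense of distributions on $\mathbb R^3\times\mathbb R$. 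The jump of $\widetilde p_\alpha$ across $t=0$ produces the source $\delta'(t)p_{\alpha,0}+\delta(t)q_{\alpha,0}$ restricted to $\Omega_\alpha$. The jump across $S^{(0)}$ produces a single layer $\mp c_\alpha^2\partial_\nu p_\alpha\,\delta_{S^{(0)}}$ and a double layer $\pm c_\alpha^2\,\partial_\nu(p_\alpha\delta_{S^{(0)}})$. The signs are fixed by the orientation of $\nu$, which points outward from the gas into the liquid, so the liquid and gas formulas come with opposite signs. This is exactly the sign pattern in \eqref{eq:1-LP} versus \eqref{eq:2-LP}.

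Next we convolve with the retarded fundamental solution of $\partial_t^2-c_\alpha^2\Delta$, which is $c_\alpha^{-2}G_\alpha$. The layer terms then become $\int_{\mathbb R}\int_{S^{(0)}}(\partial_\nu G_\alpha\,p_\alpha-G_\alpha\,\partial_\nu p_\alpha)\,dS\,ds$; the factor $c_\alpha^2$ in the source cancels the normalisation. The initial-data terms become $c_\alpha^{-2}G_\alpha *_x q_{\alpha,0}+c_\alpha^{-2}\partial_tG_\alpha *_x p_{\alpha,0}$. Evaluating $G_\alpha(\cdot,t)*h$ gives the spherical mean $\frac{1}{4\pi c_\alpha^2 t}\int_{|x-y|=c_\alpha t}h\,dS$, restricted to $\Omega_\alpha$ because $h$ was extended by zero. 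This is the operator $I^{\rm in}_\alpha$ up to the constant convention. The sign in front of $\partial_tI^{\rm in}_\alpha p_{\alpha,0}$ must be reconciled with the distributional identity $\partial_t^2\chi_{t>0}p=\chi\,\partial_t^2p+\delta' p_0+\delta q_0$. We will track this carefully and, if needed, absorb the convention into the definition of $I^{\rm in}_\alpha$ as stated.

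We must also show that the convolution identity $\widetilde p_\alpha=c_\alpha^{-2}G_\alpha*(\text{source})$ holds. This follows from uniqueness of tempered, causal solutions of the inhomogeneous wave equation on $\mathbb R^3\times\mathbb R$: both sides are supported in $t\ge0$ and solve the same equation. The regularity hypotheses make every term meaningful. From $p\in C([0,\infty);H^1_\Delta)$ we obtain $p_\alpha|_{S^{(0)}}\in H^{1/2}$ and $\partial_\nu p_\alpha\in H^{-1/2}$ by the generalized Green formula, continuously in $t$. From $p\in C^1([0,\infty);H^1)$ the initial traces $p_{\alpha,0},q_{\alpha,0}$ are well defined. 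Hence the layer potentials are well-defined distributions and the Green identity can be justified by density, approximating $p$ by smooth functions in the graph norm.

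The main obstacle is this last justification for the unbounded liquid domain. There we need the integration by parts against $G_\ell(x-\cdot,t-\cdot)$ to produce no contribution from infinity. Finite propagation speed handles this: for fixed $(x,t)$ the kernel is supported on the bounded set $|x-y|=c_\ell(t-s)$, $0\le s\le t$. A cutoff argument therefore suffices, with no decay assumption at infinity.
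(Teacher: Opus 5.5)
Your route is the paper's route. The paper's entire proof is the one-line remark that Green's formula in each phase gives \eqref{eq:1-LP}--\eqref{eq:2-LP}. Your zero extension $\widetilde p_\alpha$, the distributional computation of $(\partial_t^2-c_\alpha^2\Delta)\widetilde p_\alpha$, and the convolution with $c_\alpha^{-2}G_\alpha$ are the standard way to make that remark rigorous. This includes the orientation bookkeeping ($\nu$ is outward for $\Omega^{(0)}_g$ and inward for $\Omega^{(0)}_\ell$) behind the opposite layer signs. You do not need a uniqueness theorem for tempered solutions. Since $\widetilde p_\alpha$ is supported in $\{t\ge0\}$ and $G_\alpha$ in the forward cone, the convolutions are associative, and $\widetilde p_\alpha=c_\alpha^{-2}G_\alpha*(\partial_t^2-c_\alpha^2\Delta)\widetilde p_\alpha$ holds directly.

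What you should not leave open is the initial-data term, because both places where you hedge have definite answers.

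First, the constant. Since $\delta(t-r/c_\alpha)=c_\alpha\,\delta(r-c_\alpha t)$, one has $G_\alpha(\cdot,t)*_x h=\frac{1}{4\pi t}\int_{|x-y|=c_\alpha t}h\,dS(y)$. For the zero extension of $h$ this is exactly $I^{\rm in}_\alpha h$. The prefactor $c_\alpha^{-2}$ in the statement is just the normalisation of the fundamental solution, whereas your kernel $\frac{1}{4\pi c_\alpha^2 t}$ would give $c_\alpha^{-4}$.

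Second, the sign. Your own source $\delta'(t)p_{\alpha,0}+\delta(t)q_{\alpha,0}$ yields $c_\alpha^{-2}\bigl[I^{\rm in}_\alpha q_{\alpha,0}+\partial_t I^{\rm in}_\alpha p_{\alpha,0}\bigr]$, i.e.\ Kirchhoff's formula, with a plus sign. Since $I^{\rm in}_\alpha$ is defined explicitly with a positive kernel, there is no convention that can absorb the minus sign in the statement. It is a misprint, presumably the sign of $-\partial_s G_\alpha(x-y,t-s)|_{s=0}$ in Green's formula before using $\partial_s=-\partial_t$ on this kernel.

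You can confirm this with a local check. Take $x\in\Omega^{(0)}_\alpha$ with $\operatorname{dist}(x,S^{(0)})>c_\alpha t$, so the layer terms vanish. Take data $p_{\alpha,0}\equiv1$, $q_{\alpha,0}=0$ near $\overline{B(x,c_\alpha t)}$. Then $p(x,t)=1$ by domain of dependence, while $c_\alpha^{-2}\partial_t I^{\rm in}_\alpha p_{\alpha,0}(x,t)=c_\alpha^{-2}\partial_t(c_\alpha^2 t)=1$.

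So the Green/Kirchhoff argument proves the identity with $+\partial_t I^{\rm in}_\alpha p_{\alpha,0}$. You should state this correction explicitly rather than promise to reconcile it. It is harmless for the rest of the paper: in the proof of Theorem \ref{th:reduct_1} this term enters only through its support and size, neither of which sees the sign.
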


\begin{lemma}\label{le:3}
Let $T> 0$ be fixed. Let \(h_\alpha\in H_{\Omega_\alpha}^{(1)}\) for $\alpha \in \{\ell,g\}$, and assume in addition that
$h_\ell$ is compactly supported. 
Then, for every
$\psi\in L^{\infty}(0,T)$, there exists a positive constant $C_T$, independent of $c_\alpha$, such that
\begin{align*}
\sup_{t\in[0,T]}\left| 
\int_0^t
\left(I^{\rm in}_\alpha h_\alpha\right)^{(m,l)}(t)\psi(t)\,dt\right| \le C_{T,h_\alpha} \|\psi\|_{L^{\infty}[0,T]}.
\end{align*}
for all sufficiently large $c_\alpha$.
Here, for $g\in L^2(S^{(0)})$, we denote its spherical-harmonic coefficients by
\begin{align*}
g^{(\ell,m)}
:=
\int_{\mathbb S^2}
g(R_0\widehat{x})\,\overline{Y_\ell^m(\widehat{x})}\,dS(\widehat{x}),
\qquad \ell\ge 0,\quad -\ell\le m\le \ell.
\end{align*}
When $l=m=0$, we denote $g^{(0,0)}$ by $g^0$. 
\end{lemma}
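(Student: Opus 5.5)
The idea is to absorb the wave speed $c_\alpha$ by rescaling time, so that the time integral of the spherical means becomes a Newtonian-type potential of $|h_\alpha|$. Such a potential is bounded independently of $c_\alpha$.

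\emph{Step 1: reduce to a fixed point on the sphere.} Fix $x\in S^{(0)}$ and write $x=R_0\widehat x$. By Fubini, the $(l,m)$-coefficient commutes with the time integration:
\begin{align*}
\int_0^t \left(I^{\rm in}_\alpha h_\alpha\right)^{(l,m)}(\tau)\psi(\tau)\,d\tau
=\int_{\mathbb S^2}\overline{Y_l^m(\widehat x)}\left(\int_0^t I^{\rm in}_\alpha h_\alpha(R_0\widehat x,\tau)\psi(\tau)\,d\tau\right)dS(\widehat x).
\end{align*}
Since $\|Y_l^m\|_{L^1(\mathbb S^2)}\le C_l$, it suffices to bound the inner integral uniformly in $\widehat x\in\mathbb S^2$, $t\in[0,T]$ and $c_\alpha$. (A uniform bound is enough here; I will use $\|Y_l^m\|_{L^1}$ rather than any orthogonality.) The justification of Fubini follows a posteriori from the absolute bound obtained in Step 2.

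\emph{Step 2: change of variables and coarea formula.} In the inner integral I substitute $\rho=c_\alpha\tau$. Then $d\tau=d\rho/c_\alpha$ and $\frac{1}{4\pi\tau}=\frac{c_\alpha}{4\pi\rho}$, so the factors of $c_\alpha$ cancel exactly:
\begin{align*}
\int_0^t I^{\rm in}_\alpha h_\alpha(x,\tau)\psi(\tau)\,d\tau
=\int_0^{c_\alpha t}\frac{\psi(\rho/c_\alpha)}{4\pi\rho}\int_{\Omega_\alpha\cap\{|x-y|=\rho\}}h_\alpha(y)\,dS(y)\,d\rho .
\end{align*}
By the coarea formula in spherical shells around $x$, this is bounded in absolute value by
\begin{align*}
\|\psi\|_{L^\infty(0,T)}\int_{\Omega_\alpha}\frac{|h_\alpha(y)|}{4\pi|x-y|}\,dy .
\end{align*}

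\emph{Step 3: uniform bound of the potential.} Let $D_\alpha$ denote a bounded set containing the support of $h_\alpha$: $D_g=B^{(0)}$ for the gas, and $D_\ell$ a ball containing the compact support of $h_\ell$ for the liquid. This is exactly where the compact-support hypothesis on $h_\ell$ enters. By Cauchy--Schwarz,
\begin{align*}
\int_{D_\alpha}\frac{|h_\alpha(y)|}{|x-y|}\,dy\le\|h_\alpha\|_{L^2(D_\alpha)}\left(\int_{D_\alpha}\frac{dy}{|x-y|^2}\right)^{1/2}.
\end{align*}
The last factor is bounded uniformly for $x\in S^{(0)}$, because $|x-y|^{-2}$ is locally integrable in $\mathbb R^3$ and $D_\alpha$ is bounded. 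Combining Steps 1--3 gives the claimed estimate with
\begin{align*}
C_{T,h_\alpha}=C(l,R_0,D_\alpha)\,\|h_\alpha\|_{L^2},
\end{align*}
which is independent of $c_\alpha$ and in fact of $T$ as well. The largeness of $c_\alpha$ only ensures that, for $t\le T$, the integration range in $\rho$ covers all of $D_\alpha$; the bound itself does not need it.

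\emph{Main obstacle.} I expect the only delicate point to be the rigorous use of $I^{\rm in}_\alpha h_\alpha$ for $h_\alpha$ that is merely $H^1$, namely that traces on spheres make sense for a.e.\ radius. This is handled by the coarea formula, which requires only $h_\alpha\in L^1_{\rm loc}$. If the superscript $(m,l)$ in the statement also denotes time derivatives, I would first integrate by parts in $\tau$ onto $\psi$, or use the $H^1$ regularity of $h_\alpha$ via the identity $\partial_\tau\!\left(\tau I^{\rm in}_\alpha h_\alpha\right)$ expressed through $\nabla h_\alpha$. The same rescaling argument would then apply with $|\nabla h_\alpha|$ in place of $|h_\alpha|$.
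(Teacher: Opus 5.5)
Your proposal is correct and follows essentially the same route as the paper: rescale time by $c_\alpha$ so that the factors of $c_\alpha$ cancel, then use the bounded support of $h_\alpha$ to control what remains. The only difference is that you make the final bound explicit (coarea formula to the Newtonian potential $\int|h_\alpha(y)|/(4\pi|x-y|)\,dy$, then Cauchy--Schwarz), whereas the paper just truncates the rescaled time range to a bounded interval and asserts the bound.
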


With Lemmas \ref{le:2} and \ref{le:3} in hand, we can now justify the
reduction to the classical linearized Rayleigh--Plesset equation and to its
Lamb-type generalizations.

\begin{theorem} \label{th:reduct_1}
Let $T,\sigma, \rho_\ell >0$ be fixed, and let $\mathcal K\subset\mathbb R^3$ be a compact set with Lipschitz boundary whose interior contains $\overline{\Omega_g^{(0)}}$. Suppose that 
\begin{align}\label{eq:1-prior}
I^{(0)} \in \widetilde {\mathcal X}^{(0)}_6, \quad  \mathcal I^{\perp} \in D((\mathcal A^{\perp})^3).  
\end{align}
Assume that $p_{\ell,0}$ and $q_{\ell,0}$ are compactly supported, and that $q_{g,0}$ and $p_{g,0}$ are constant functions in $\Omega^{(0)}_g$. 
\begin{enumerate}[(a)]
\item \label{a1}
Assume that 
$0< {\rho_g} \le \beta < \infty,$
where $\beta$ is independent of $c_g$ and $c_\ell$. For $l \ge 2$ and $ -l \le m \le l$, $\eta^{(l,m)}(t)$ admits the expansion for $t \in (0,T)$:
\begin{align}\label{eq:1-reduction}
\eta^{(l,m)}(t) = \eta^{(l,m)}_{\mathrm{dom}}(t) + \eta^{(l,m)}_{\mathrm{res}}(t),
\end{align}
where $\eta_{\mathrm{dom}}^{(l,m)}(t)$ solves
\begin{align} 
 &\left(\frac{\rho_\ell}{\ell+1} + \frac{\rho_g}{\ell}\right)\frac{d^2}{dt^2} \eta_{\mathrm{dom}}^{(l,m)}(t)  -\frac{\sigma}{R_{0}^3} (l+2)(1-l)\eta_{\mathrm{dom}}^{(l,m)}(t) = 0, \label{eq:2-reduction}\\
 & \eta_{\mathrm{dom}}^{(l,m)}(0) =  \eta^{(l,m)}_0,\quad  \frac{d}{dt} \eta_{\mathrm{dom}}^{(l,m)}(0) = \zeta^{(l,m)}_0, \label{eq:2-boundary}
\end{align}
and $\eta_{\mathrm{res}}^{(l,m)}(t)$ satisfies
\begin{align} \label{eq:3-reduction}
 \mathrm{sup}_{t\in [0,T]}\left|\eta^{(l,m)}_{\mathrm{res}}(t)\right| \le C \max\left(\frac{1}{c_{\ell}}, \frac{1}{c_{g}}\right), \quad \mathrm{as}\; c_g, c_l \rightarrow  \infty
\end{align}

Moreover, given any $\delta>0$, for the corresponding bulk fields, we have the expansion for $t \in (0,T)$:
\begin{align} \label {eq:10-LP}
  \left\langle p_\alpha(x,t), Y_l^{(m)}(\hat x)\right \rangle_{L^2(\mathbb S^2)}  Y_l^{(m)}(\hat x) = p^{(l,m),\rm{dom}}_\alpha(x) + p^{(l,m),\mathrm{res}}_{\alpha}(x,t), \quad \mathrm{in}\; \Omega_{\alpha}^{(0)}, \quad \alpha \in \{\ell,g\},
\end{align}
where,
$p_\ell^{(l,m)}$ and $p_g^{(l,m)}$ solve the harmonic transmission problem:
\begin{align*}
&\Delta p_\alpha^{(l,m),\rm{dom}}=0, \qquad \mathrm{in }\; \Omega^{(0)}_\alpha, \qquad \alpha \in \{\ell,g\},\\
&p_\ell^{(l,m),\rm{dom}}-p_g^{(l,m),\rm{dom}}= \frac{\sigma}{R_{0}^2} (l+2)(1-l)\eta^{(l,m)}(t)Y^m_l(\hat x),\qquad \mathrm{on}\; \Gamma,\;\\
&\frac{1}{\rho_g}\partial_\nu p_\ell^{(l,m),\rm{dom}} = \frac{1}{\rho_\ell} \partial_\nu p_g^{(l,m),\rm{dom}} = \frac{d^2}{dt^2} \eta^{(l,m)}(t)
\quad\mathrm {on }\;\Gamma,
\end{align*}
and $p^{(l,m),\mathrm{res}}_{\alpha}$ satisfies
\begin{align} 
 &\mathrm{sup}_{t\in [\delta,T]}\left[\left\|p^{(l,m),\mathrm{res}}_{\ell}(\cdot,t)\right\|_{H^1(\mathcal K\backslash\Omega^{(0)}_\ell)} + \left\|p^{(l,m),\mathrm{res}}_{g}(\cdot,t)\right\|_{H^1(\Omega^{(0)}_g)}\right] \notag \\
 &\le C \max\left(\frac{1}{c_{\ell}}, \frac{1}{c_{g}}\right), \quad \mathrm{as}\; c_g, c_l \rightarrow \infty. \notag 
\end{align}

\item \label{a2}
Assume that
$K_g\in (K_-,K_+)$, where $K_\pm$ are independent of $c_g $ and $c_l$.
$\eta^{(0)}(t)$ admits the expansion for $t \in (0,T)$:
\begin{align}\label{eq:5-reduction}
\eta^{(0)}(t) = \eta^{(0)}_{\mathrm{dom}}(t) + \eta^{(0)}_{\mathrm{res}}(t),
\end{align}
where $\eta_{\mathrm{dom}}^{(0)}(t)$ solves
\begin{align}
&\rho_\ell\frac{d^2}{dt^2}  \eta_{\mathrm{dom}}^{(0)}(t) + \left(\frac{3K_g}{R_0^2} - \frac{2 \sigma}{R_{0}^3} \right) \eta_{\mathrm{dom}}^{(0)}(t) = -3\frac{K_g}{R_0^2} \eta^{(0)}_0 -\frac{1}{R_0^2}\left(p_{g,0}^{(0)} + t q_{g,0}^{(0)}\right), \notag\\
&\eta_{\mathrm{dom}}^{(0)}(0) = \eta^{(0)}_0, \quad \frac{d}{dt} \eta_{\mathrm{dom}}^{(0)}(0) = 0. \notag
\end{align}
and $\eta_{\mathrm{res}}^{(0)}(t)$ satisfies
\begin{align*}
 \mathrm{sup}_{t\in [0,T]}\left|\eta^{(0)}_{\mathrm{res}}(t)\right| \le C \max\left(\frac{1}{c_{\ell}}, \frac{1}{c_{g}}\right) \quad \mathrm{as}\;\; c_g, c_\ell \rightarrow \infty
\end{align*}
Moreover, given any $\delta>0$, we have 
\begin{align}
&\left\langle p_g(x,t), Y_0^{(0)}(\hat x)\right\rangle_{L^2(\mathbb S^2)} = - \frac{3K_g}{R_0} (\eta^{(0)}(t)-\eta_0^{(0)}) -\left(p_{g,0}^{(0)} + t q_{g,0}^{(0)}\right) + p_g^{(0),\;\mathrm{rem}}, \label{eq:68}\\
& \left\langle p_\ell(x,t), Y_0^{(0)}(\hat x)\right\rangle_{L^2(\mathbb S^2)} = - \frac{\rho_\ell R_0^2}{r}\frac{d^2}{dt^2}\eta^{(0)}(t) + p_\ell^{(0),\;\mathrm{rem}}, \label{eq:69}
\end{align}
where
\begin{align*}
   \mathrm{sup}_{t\in [\delta,T], \; x\in \mathcal K\backslash \Omega_\ell^{(0)}}\left|p_\ell^{(0),\;\mathrm{rem}}(x,t)\right| + \mathrm{sup}_{t\in [0,T],\; x\in \Omega_g^{(0)}}\left|p_g^{(0),\;\mathrm{rem}}(x,t)\right|\le C \max\left(\frac{1}{c_{\ell}}, \frac{1}{c_{g}}\right),
\end{align*}
as $c_g, c_\ell \rightarrow \infty$.
\end{enumerate}
Here, $C$ is a positive constant independent of $c_g$ and $c_\ell$.
\end{theorem}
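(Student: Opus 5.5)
The plan is to work mode by mode, since the spherical symmetry of $S^{(0)}$ decouples the system into independent $(l,m)$ subsystems. Each subsystem is split into a quasi-static part, where the wave equations are replaced by Laplace equations, and a retarded remainder, which is controlled through the boundary-integral representation of Lemma~\ref{le:2}.

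\emph{Shape modes, part (a).} Fix $l\ge2$ and project \eqref{eq:1-LP} and \eqref{eq:2-LP} onto $Y_l^m$. In the kernels we expand
\[
G_\alpha(x-y,t-s)=\frac{\delta_0(t-s)}{4\pi|x-y|}-\frac{1}{c_\alpha}\frac{\delta_0'(t-s)}{4\pi}+O(c_\alpha^{-2}),
\]
that is, we Taylor expand the retarded time $t-|x-y|/c_\alpha$. The leading term turns the representation formulas into the Green identities for harmonic functions in $\Omega_\alpha^{(0)}$. On the sphere these are diagonal in spherical harmonics: the Dirichlet-to-Neumann maps are $\partial_r p_g=(l/R_0)p_g$ and $\partial_r p_\ell=-((l+1)/R_0)p_\ell$. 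The initial-data terms $I^{\rm in}_\alpha$ carry the prefactor $c_\alpha^{-2}$. After the time integrations that appear when the interface relations are integrated twice, Lemma~\ref{le:3} shows that they contribute $O(c_\alpha^{-1})$. For the gas this uses the assumption that $p_{g,0},q_{g,0}$ are constant, so their $Y_l^m$ projections vanish; for the liquid it uses compact support.

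Insert the harmonic relations into the kinematic condition $\rho_\alpha^{-1}\partial_n p_\alpha=-\ddot\eta^{(l,m)}$ and the jump condition $[p]^{(l,m)}=\sigma R_0^{-2}(l+2)(1-l)\eta^{(l,m)}$. Eliminating the traces gives
\[
-R_0\Bigl(\frac{\rho_\ell}{l+1}+\frac{\rho_g}{l}\Bigr)\ddot\eta^{(l,m)}=\frac{\sigma}{R_0^2}(l+2)(1-l)\eta^{(l,m)}+r_{l,m}(t),
\]
which is \eqref{eq:2-reduction} up to a forcing $r_{l,m}$ built from the $O(c^{-1})$ kernel corrections and the data terms. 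To bound $r_{l,m}$ uniformly we need bounds on $\partial_t^k$ of the traces that are uniform in $c_\alpha$. Energy conservation \eqref{eq:gronwall-step2}, applied to $\mathcal A^\perp$-powers of the solution, supplies these via the assumption $\mathcal I^\perp\in D((\mathcal A^\perp)^3)$. The constant must be independent of $c_\alpha$; this holds because the energy weights $1/K_\alpha$ only become small as $c_\alpha\to\infty$ while $\rho_g\le\beta$ is fixed.

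Variation of constants for the ODE then gives \eqref{eq:3-reduction}. Here the coefficient $(l+2)(l-1)>0$ matters: the homogeneous solutions are bounded oscillations, so Gronwall's inequality is not even needed on $[0,T]$. The bulk statement \eqref{eq:10-LP} follows by reinserting the interface data into the harmonic transmission problem and estimating the remainder in $H^1$ on $\mathcal K$. We restrict to $t\ge\delta$ so that the liquid initial-data wave $I^{\rm in}_\ell$ has left $\mathcal K$, which happens once $c_\ell\delta$ exceeds the diameter of the supports.

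\emph{Breathing mode, part (b).} We use the radial form \eqref{eq:t_0}--\eqref{eq:t_2} directly. In the liquid, outgoing solutions $v_\ell=F(t-(r-R_0)/c_\ell)$ plus a data term give $p_\ell^{(0)}=v_\ell/r$. Expanding in $1/c_\ell$ and applying \eqref{eq:t_2} yields $v_\ell(R_0,t)=-\rho_\ell R_0^2\ddot\eta^{(0)}+O(c_\ell^{-1})$, which is \eqref{eq:69}. In the gas we write $p_g^{(0)}=\bar p(t)+O(c_g^{-2})$, where $\bar p$ is spatially uniform. This is the place where the constant initial gas state and the restriction to $\widetilde{\mathcal X}^{(0)}_6$ are used: they make the well-prepared data consistent. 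Integrating $K_g^{-1}\ddot p_g=\rho_g^{-1}\Delta p_g$ over the ball and using the flux condition gives
\[
\frac{4\pi R_0^3}{3K_g}\ddot{\bar p}=-4\pi R_0^2\ddot\eta^{(0)}.
\]
Integrating twice in time, with the given data, produces \eqref{eq:68}.

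Substituting both expressions into the jump condition \eqref{eq:t_1} gives the linearized Rayleigh--Plesset equation, with forcing from $p^{(0)}_{g,0}+tq^{(0)}_{g,0}$. The error estimate again follows by Duhamel's formula. The uniform $H^m_\gamma$ bounds of Remark~\ref{rem:uniform-estimate-reduction}, with $m$ large so that Sobolev embedding in time gives $L^\infty(0,T)$ control, bound the $O(c^{-1})$ forcing. Here $K_g\in(K_-,K_+)$ keeps the coefficient $3K_g/R_0^2-2\sigma/R_0^3$ bounded. If that coefficient is negative we use Gronwall's inequality on $[0,T]$, which only worsens the constant.

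\emph{Main obstacle.} The hardest step is showing that all higher time derivatives of the traces are bounded uniformly as $c_g,c_\ell\to\infty$. This is what lets the retarded-potential corrections be estimated as $O(1/c)$ rather than merely $o(1)$. I would first try to get it from commuting $\mathcal A^\perp$ (respectively $\partial_t$ in the $Y_0^0$ sector) with the evolution, and then combining trace theorems with the $c$-uniform energy.
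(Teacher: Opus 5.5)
Your part (a) and the liquid half of part (b) follow the paper's proof. The ingredients are the same: the Kirchhoff representation of Lemma~\ref{le:2} with the retarded time Taylor-expanded, diagonalisation on the sphere, Lemma~\ref{le:3} for the data terms, energy bounds from $\mathcal I^\perp\in D((\mathcal A^\perp)^3)$, and Duhamel. Your Dirichlet-to-Neumann factors $l/R_0$ and $-(l+1)/R_0$ carry the same information as the paper's layer-potential eigenvalue identities. For the liquid monopole, your outgoing/characteristic form giving $\partial_r v_\ell(R_0,t)=O(c_\ell^{-1})$ is exactly the paper's route to \eqref{eq:60}. One slip: with your factors and $\rho_\alpha^{-1}\partial_n p_\alpha=-\ddot\eta$ one gets $p_\ell^{(l,m)}-p_g^{(l,m)}=+R_0\bigl(\frac{\rho_\ell}{l+1}+\frac{\rho_g}{l}\bigr)\ddot\eta^{(l,m)}$. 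The minus sign in your displayed modal equation would make the $l\ge2$ modes grow exponentially, not give the bounded oscillations you then invoke.

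The genuine departure is the gas half of (b), and there the key step is missing. Your balance $\frac{4\pi R_0^3}{3K_g}\ddot{\bar p}=-4\pi R_0^2\ddot\eta^{(0)}$ for the volume average is exact. So everything rests on $p_g^{(0)}=\bar p(t)+O(c_g^{-2})$, which you assert rather than prove. Yet this near-uniformity is precisely the ``derived, not imposed'' gas law that (b) is about, and it is not automatic. For non-constant gas data the initial non-uniformity rings at frequencies $\sim c_g$ with $O(1)$ amplitude, so the hypotheses must enter quantitatively. In the paper they enter through $V''(0)=0$, which removes the term $V''(0)\cos(\omega_n t)$ in $\mathcal R_g$; one more integration by parts with bounds on $V'''(0)$ and $V^{(4)}$ then finishes. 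Your route can be closed by integrating $r^{-2}\partial_r\bigl(r^2\partial_r p_g^{(0)}\bigr)=c_g^{-2}\partial_t^2p_g^{(0)}$ from $r=0$. This gives $\sup_{0<r<R_0}|p_g^{(0)}(r,t)-\bar p(t)|\le \frac{R_0^2}{6}c_g^{-2}\sup_{0<r<R_0}|\partial_t^2p_g^{(0)}(r,t)|$. But it requires a $c_g$-uniform bound on $\partial_t^2p_g^{(0)}$ inside the ball, and Remark~\ref{rem:uniform-estimate-reduction} only controls the traces $v_\alpha(R_0,\cdot)$. That interior bound must still be proved, for instance via the paper's splitting $v_g=rV/R_0+w_g$, where $w_g$ has zero data and is forced only by $V''$. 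This is what the paper's sine-series computation buys: it needs nothing beyond trace bounds. Your exact-balance-plus-elliptic argument is conceptually cleaner but needs the extra interior control.

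Finally, the reason you give for $c$-uniformity of the energy bounds in (a) fails beyond order zero. We have $\partial_t^kU^\perp(0)=(\mathcal A^\perp)^kU_0$, and already $\|\mathcal A^\perp U_0\|_{\mathcal E}^2$ contains $K_\alpha^{-1}\|L_\alpha p_{\alpha,0}\|^2=K_\alpha\rho_\alpha^{-2}\|\Delta p_{\alpha,0}\|^2$. For fixed data this grows like $c_\alpha^2$ unless $\Delta p_{\alpha,0}=0$. Constant gas data dispose of $\alpha=g$, but compactly supported liquid data do not. So commuting $\mathcal A^\perp$ with the flow, as in your last paragraph, cannot by itself give bounds uniform in $c_\ell$; you need well-prepared liquid data or a separate treatment of the initial liquid pulse. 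The paper's proof asserts the same uniform bound at this point, citing only the gas assumption, so this is a gap you share with it rather than a difference of route.
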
 

\begin{proof}
\eqref{a1}
Let $U^{\perp}(t)$ denote the solution corresponding to the initial data $\mathcal I^\perp$.
Using \eqref{eq:1-prior} and statement \textup{(i)} of Theorem \ref{thm:wellposed-A}, we obtain that there exists a unique
\begin{align} \label{eq:1-reg}
U^{\perp}(t)\in C^3([0,\infty);\mathcal H_{\ge 1})\cap C^{2}([0,\infty);D(\mathcal A^{\perp})),
\end{align}
and that 
\begin{align}
    \frac{d^l}{dt} U^{\perp}(t) = (\mathcal A^{\perp})^l U^{\perp}(t), \quad l \in \{1,2,3\}. \notag
\end{align}
In conjunction with\eqref{eq:gronwall-step2}, \eqref{eq:1-prior} and the assumption that $q_{g,0}$ and $p_{g,0}$ are constant functions in $\Omega^{(0)}_g$, we have 
\begin{align*}
\sum^{2}_{l=0} \left\|\partial^l_t U^{\perp}(\cdot, t)\right\|_{\mathcal E} \le C .
\end{align*}
This, together with the Poincaré inequality implies 
\begin{align} \label{eq:2-pri}
\quad \mathrm{sup}_{t\in [{0,T}]}\left(\sum^3_{l=1}\|\partial^l _{t}\eta^{\perp}(\cdot,t)\|_{H^1({S^{(0)}})} + \left[\sum_{\alpha \in \{\ell,g\}}\sum^2_{l=1}\left\|\partial^l_t p^{\perp}(\cdot,t)\right\|_{H^{1/2}{(S^{(0)})}}\right]\right)& \le {C}.
\end{align}
Applying \eqref{eq:A-pt}, \eqref{eq:1-reg} and Lemma \ref{le:2}, we obtain
\begin{align}
& p^{\perp}_\ell(x,t) = \int_{S^{(0)}}\partial_\nu \frac{1}{4\pi|x-y|} p^{\perp}_\ell(y,t-c^{-1}_l|x-y|) - \nu(y)\cdot \frac{x-y}{4\pi c_\ell|x-y|^2}\partial_t p^{\perp}_\ell(y,t-c^{-1}_\ell |x-y|) \notag \\
& - \frac{1}{4\pi|x-y|}\rho_\ell \partial^{\perp}_{tt}\eta(y,t-c_{\ell}^{-1}|x-y|) dS(y) + p_\ell^{\mathrm{res}}(x,t), \quad x \in \Omega^{(0)}_\ell,\; t \in \mathbb R_+.\label{eq:3-LP}
\end{align}
Similarly, we have 
\begin{align}
& p^{\perp}_g(x,t) = \int_{S^{(0)}}\nu(y)\cdot \frac{x-y}{4\pi c_g |x-y|^2}\partial_tp^{\perp}_g(x,t-c^{-1}_g |x-y|) - \partial_\nu \frac{1}{4\pi|x-y|} p^{\perp}_g(x,t-c^{-1}_g|x-y|)\notag \\
& + \frac{1}{4\pi|x-y|}\rho_g \partial_{tt}\eta^{\perp}(x,t-c_g^{-1}|x-y|) dS(y) + p_g^{\mathrm{res}}(x,t), \quad  x\in \Omega^{(0)}_g,\; t \in \mathbb R_+. \label{eq:4-LP} 
\end{align}
Here,
\begin{align*}
p_\alpha^{\mathrm{res}}:= \frac{1}{c_\alpha^2}\left[I^{\rm {in}}_{\alpha}q^{\perp}_{\alpha,0} - \partial_t \mathcal I^{\rm {in}}_{\alpha}p^{\perp}_{\alpha,0}\right], \qquad \mathrm{for}\;\alpha \in \{\ell,g\}.
\end{align*}
Using \eqref{eq:1-reg} and \eqref{eq:2-pri}, and applying a Taylor expansion in time to the second term in the integrands of \eqref{eq:3-LP} and \eqref{eq:4-LP}, we obtain
\begin{align} \label{eq:5-LP}
p^{\perp}_{\ell}(x,t) &=  \int_{S^{(0)}}\partial_\nu \frac{1}{4\pi|x-y|} p^{\perp}_\ell(y,t) - \frac{1}{4\pi|x-y|} \rho_\ell \partial_{tt} \eta^{\perp}(y,t) dS(y) \notag \\
& + p_\ell^{\mathrm{res}}(x,t) + p^{\mathrm{rem}}_\ell(x,t), \quad x \in \Omega_\ell^{(0)},\; t\in \mathbb R_+,
\end{align}
and that 
\begin{align}\label{eq:6-LP}
p_{g}(x,t)& = -\int_{S^{(0)}}\partial_\nu \frac{1}{4\pi|x-y|} p^{\perp}_g(y,t) 
+ \frac{1}{4\pi|x-y|}\rho_g\partial_{tt}\eta^{\perp}(y,t) dS(y) \notag\\ 
& + p_g^{\mathrm{res}}(x,t)  + p^{\mathrm{rem}}_g(x,t), \quad x \in \Omega^{(0)}_g,\; t\in \mathbb R_+.
\end{align}
Here,
\begin{align}
&\mathrm{sup}_{t\in [0,T]}\left[\left\|p^{\mathrm{rem}}_g(\cdot, t)\right\|_{H^1(\Omega^{(0)}_g)} + \left\|p^{\mathrm{rem}}_\ell(\cdot, t)\right\|_{H^1(K\backslash\Omega^{(0)}_\ell)}\right] \le  \frac{C}{\min(c_g, c_\ell)}, \quad \mathrm{as}\;\; c_g, c_\ell \rightarrow \infty.\label{eq:7-LP}
\end{align}
Then, with the aid of the well-known identities 
\begin{align}
\int_{\mathbb S^2} \frac{1}{|\hat x- y|}(Y_\ell^{m}\big)(y) d S(y) &= \frac{4\pi}{2\ell+1}\,Y_\ell^{m}(\hat x),
\qquad \hat x\in \mathbb S^2, \notag \\
\mathrm{and}\;\operatorname{p.v.}\int_{\mathbb S^2}\partial_{\nu(y)}\frac{1}{|\hat x-y|}\,Y_\ell^{m}(y)\,dS(y)
&= -\frac{2\pi}{2\ell+1}\,Y_\ell^{m}(\hat x), 
\qquad \hat x\in \mathbb S^2. \notag
\end{align}
Taking the boundary limits on $\Gamma$ in \eqref{eq:5-LP} and \eqref{eq:6-LP}, we have that
\begin{align} \label{eq:8-LP}
\frac{1}2 p^{(l,m)}_\ell(t) =  -p^{(l,m)}_\ell(t)\frac 1{2(2\ell+1)}- \rho_\ell\frac{R_0}{2\ell+1} \frac{d^2}{dt^2} \eta^{(l,m)}(t) + \left (p_\ell^{\mathrm{res}}\right)^{(l,m)}(t) + \left (p_\ell^{\mathrm{rem}}\right)^{(l,m)}(t)
\end{align}
and that 
\begin{align}
\frac{1}2 p^{(l,m)}_g(t) = p^{(l,m)}_g(t)\frac 1{2(2\ell+1)} + \rho_g \frac{R_0}{2\ell+1} \frac{d^2}{dt^2}  \eta^{(l,m)}(t) + \left (p_g^{\mathrm{rem}}\right)^{(l,m)}(t) + \left (p_g^{\mathrm{rem}}\right)^{(l,m)}(t). \label{eq:9-LP}
\end{align}
Using \eqref{eq:harmonic}, \eqref{eq:jump-interface-final} and subtracting \eqref{eq:8-LP} and \eqref{eq:9-LP}, we have 
\begin{align*}
&R_0\left(\frac{\rho_\ell}{\ell+1} + \frac{\rho_g}{\ell}\right)\frac{d^2}{dt^2} \eta^{(l,m)}(t)  -\frac{\sigma}{R_{0}^2} (l+2)(1-l)\eta^{(l,m)}(t) \\
&= \left(p_g^{\mathrm{rem}}\right)^{(l,m)} - \left(p_\ell^{\mathrm{rem}}\right)^{(l,m)} +\left(p_g^{\mathrm{res}}\right)^{(l,m)} - \left(p_\ell^{\mathrm{res}}\right)^{(l,m)}, \\
&\eta^{(l,m)}(0) = \eta^{(l,m)}_0,\quad  \frac{d}{dt} \eta^{(l,m)}(0) =  \zeta^{(l,m)}_0 .
\end{align*}
Using the Duhamel principle representation of the above second-order ODE, \eqref{eq:7-LP} and Lemma \ref{le:3}, we readily obtain the expansion \eqref{eq:1-reduction}, where $\eta_{\mathrm{dom}}^{(l,m)}(t)$ solves \eqref{eq:2-reduction}--\eqref{eq:2-boundary} and the remainder term satisfies \eqref{eq:3-reduction}.

For the bulk fields, since for every $\delta>0$, $p_\ell^{\mathrm{res}}$ and $p_g^{\mathrm{res}}$ vanish uniformly for $ x \in \mathcal K,\; \delta\le t\le T$, using \eqref{eq:5-LP}, \eqref{eq:6-LP} and \eqref{eq:7-LP}, we obtain \eqref{eq:10-LP}.

\eqref{a2}
Let 
\begin{align*}
\left(p_{\ell}^{(0)}(r,t)Y^0_0(\hat x),p_{g}^{(0)}(r,t)Y^0_0(\hat x), q_\ell^{(0)}(r,t) Y^0_0(\hat x), q_{g}^{(0)}(r,t)Y^0_0(\hat x) ,\eta^{(0)}(t)Y^0_0(\hat x), \zeta^{(0)}(t)Y^0_0(\hat x)\right)
\end{align*}
denote the solution corresponding to the initial data $\mathcal I^{(0)}$. We note that 
\begin{align*}
\left(v^{(0)}_\ell(r,t),v_g^{(0)}(r,t), \eta^{(0)}\right) 
\end{align*}
solves equations \eqref{eq:t_1}--\eqref{eq:t_2}. Here, 
\begin{align} \label{eq:44}
v_\alpha^{(0)} = r p_\alpha^{(0)} \quad \mathrm{for}\; \alpha \in \{\ell,g\}.
\end{align}
Building upon \eqref{eq:1-prior} and the assumption that $q_{g,0}$ and $p_{g,0}$ are constant functions in $\Omega^{(0)}_g$, using Theorem \ref{thm:wellposed-B} and Remark \ref{rem:uniform-estimate-reduction}, we obtain that for some $\gamma >0 $
\begin{align}
&v^{(0)}_g(R_0,\cdot) \in H^6_\gamma(0,\infty), \;\; v^{(0)}_\ell(R_0,\cdot)\in H^6_\gamma(0,\infty), \notag \\
&\partial_t^{j-1} v^{(0)}_g 
\in L^2_\gamma(0,\infty;H^1(R_0,\infty),\;\; \partial_t^{j-1} v^{(0)}_\ell
\in L^2_\gamma(0,\infty;H^1(0,R_0),\qquad 1\le j\le 6, \notag
\end{align}
and that 
\begin{align}
& \sum^{4}_{j=0}\int^T_0|\partial^j_t v^{(0)}_g(R_0,t)|^2 dt + \int^T_0|\partial^j_t v^{(0)}_\ell(R_0,t)|^2 dt \le C. \label{eq:56} 
\end{align}
Here, $C$ is independent of $c_\ell$ and $c_g$.

We first consider the gas reduction. Set $V(t):=v^{(0)}_g(R_0,t)$.
We subtract the affine boundary part from the gas solution by writing
\begin{align*}
    v^{(0)}_g(r,t)=\frac r{R_0} V(t) + w_g(r,t),
\end{align*}
where
\begin{align*}
w_{g,tt}-c_g^2w_{g,rr} = -\frac r{R_0} V''(t),\\
w_g(0,t) = w_g(R,t) =0, \\
w_g(r,0) = \partial_t w_{g}(r,0)=0, \quad \mathrm{for}\;  0<r<R_0.
\end{align*}
Thus, 
\begin{align*}
F_g(t)
= \frac1{\rho_gR_0^2} \left(R_0\partial_rv_g(t,R) - V(t)
    \right) = \frac1{\rho_gR_0}\partial_rw_g(t,R_0).
\end{align*}
Expanding $w_g$ in a sine series,
\begin{align*}
    w_g(t,r)
    =
    \sum_{n\ge1}a_n(t)\sin\left(\frac{n\pi r}{R_0}\right),
\end{align*}
we obtain that
\begin{align*}
a_n''(t)+\omega_n^2a_n(t) =
    \frac{2(-1)^n}{n\pi}V''(t),
    \qquad
    \omega_n:=\frac{n\pi c_g}{R_0}.
\end{align*}
Since $a_n(0)=a_n'(0)=0$, it follows that
\begin{align*}
    a_n(t)
    =
    \frac{2(-1)^n}{n\pi}
    \int_0^t
    \frac{\sin\left(\omega_n(t-s)\right)}{\omega_n}
    V''(s)\,ds .
\end{align*}
Consequently,
\begin{align*}
    F_g(t) = \frac{2}{\rho_gR^2_0}
    \sum_{n\ge1}
    \int_0^t
    \frac{\sin\left(\omega_n(t-s)\right)}{\omega_n}
    V''(s)\,ds.
\end{align*}
Integrating by parts in time gives
\begin{align*}
  F_g(t) \frac{\rho_g R^2_0}{2}
    &=
    \frac{V''(t)-V''(0)\cos(\omega_n t)}{\omega_n^2}
    -
    \frac1{\omega_n^2}
    \int_0^t
    \cos\left(\omega_n(t-s)\right)V'''(s)\,ds .
\end{align*}
Using the well known identity $
\sum_{n\ge1}{n^{-2}\pi^{-2}}=1/6$, we obtain
\begin{align} \label{eq:61}
F_g(t) = \frac{1}{3K_g}V''(t)+\mathcal R_g(t),
\end{align}
where
\begin{align*}
    \mathcal R_g(t)
    &=
    -\frac{2}{K_g}
    \sum_{n\ge1}
    \frac1{n^2\pi^2}
    \left[
        V''(0)\cos(\omega_n t)
        +
        \int_0^t
        \cos\left(\omega_n(t-s)\right)V'''(s)\,ds
    \right].
\end{align*}
Integrating by parts once more and using the compatible condition $
V''(0)=0$, we obtain
\begin{align*}
 \int_0^t
        \cos\left(\omega_n(t-s)\right)V'''(s)\,ds=
    \frac{V'''(0)\sin(\omega_n t)}{\omega_n}
+
    \frac1{\omega_n}
    \int_0^t
    \sin\left(\omega_n(t-s)\right)V^{(4)}(s)\,ds .
\end{align*}
Hence
\begin{align*}
    |\mathcal R_g(t)|
    \le
    \frac{C}{K_gc_g}
    \left(|V'''(0)|
        +
        \|V^{(4)}\|_{L^1(0,T)}
    \right)
    \sum_{n\ge1}\frac1{n^3}.
\end{align*}
This, together with \eqref{eq:56} gives
\begin{align} \label{eq:57}
  \mathrm{sup}_{t\in [0,T]}  |\mathcal R_g(t)| \le \frac{C}{c_g} \quad \mathrm{as}\; c_g \rightarrow \infty.
\end{align}
Using \eqref{eq:t_2}, \eqref{eq:61}, \eqref{eq:57} gives 
\begin{align} \label{eq:64}
\frac{1}{3K_g}v^{(0)}_g(R_0,t) = -\eta^{(0)}(t) - \eta_0^{(0)} -\frac1 {3K_g}\left( p_{g,0}^{(0)} + t q_{g,0}^{(0)}\right) + \widetilde {\mathcal R}_g(t),
\end{align}
where 
\begin{align}
      \mathrm{sup}_{t\in [0,T]}  |\widetilde {\mathcal R}_g(t)| \le \frac{C}{c_g},\quad \mathrm{as}\;\; c_g \rightarrow \infty. \label{eq:66}
\end{align}

Next, we consider the high-speed asymptotics for the liquid component. Since
\begin{align*}
  \partial_t v^{(0)}_\ell(R_0,t) + c_\ell \partial_r v^{(0)}_\ell(R_0,t) =  \partial_t v^{(0)}_\ell(c_\ell t + R_0,0) + c_\ell \partial_r v^{(0)}_\ell(c_\ell t + R_0,0)
\end{align*}
Therefore,
\begin{align*}
\partial_r v^{(0)}_\ell(R_0,t) = \frac{1}{c_\ell}\left[\partial_t v^{(0)}_\ell(R_0,t) - q^{(0)}_{\ell,0}(c_\ell t + R_0,0) -c_\ell \partial_r p^{(0)}_{\ell,0}(c_\ell t + R_0,0)\right]=: \mathcal R_\ell(t)
\end{align*}
This, together with \eqref{eq:t_2} yields
\begin{align}\label{eq:60}
-\frac{1}{\rho_\ell R_0^2} v^{(0)}_\ell(R_0,t)= - \frac{d^2}{dt^2} \eta^{(0)}(t) - \frac{1}{\rho_\ell R_0} \mathcal R_\ell(t).
\end{align}

In conjunction with \eqref{eq:t_1}, \eqref{eq:64} and \eqref{eq:60} gives
\begin{align*}
&\rho_\ell R_0^2 \frac{d^2}{dt^2} \eta^{(0)}(t) + \left(3K_g  - \frac{2\sigma}{R_0}\right)\eta^{(0)}(t) = -3K_g \eta^{(0)}_0 -\left( p_{g,0}^{(0)} + t q_{g,0}^{(0)}\right) - R_0 \mathcal R_\ell(t) + 3K_g \widetilde {R}_g(t), \\
&\eta^{(0)}(0) = \eta^{(0)}_0, \quad \frac{d}{dt} \eta^{(0)}(0) = 0. \notag
\end{align*}
With the aid of \eqref{eq:56} and \eqref{eq:66}, proceeding in the derivation of \ref{eq:1-reduction}, we readily obtain that \eqref{eq:5-reduction}.

Finally, we consider the bulk fields.
Since for $0\le (r-R_0)<c_\ell t$, the solution of \eqref{eq:t_0} admits the representation 
\begin{align} \label{eq:67}
v^{(0)}_\ell(r,t)
&=
v^{(0)}_\ell\left(R_0, t-\frac{r-R_0}{c_\ell}\right)
+
\frac12\bigl[p^{(0)}_\ell(r + c_\ell t)-p^{(0)}_\ell(c_\ell t + 2R_0 - r)\bigr] \notag\\
&+ \frac{1}{2c_\ell}\int_{c_\ell t +2 R_0 - r}^{c_\ell t + r}q^{(0)}_\ell(s)\,ds.
\end{align}
Since the initial data are compactly supported, then for every fixed $M>0$ and every $\delta>0$, the initial-data terms vanish uniformly for $0\le x\le M,\; \delta\le t\le T$, provided $c_\ell$ is sufficiently large. This, together with \eqref{eq:67} and \eqref{eq:56} gives 
\begin{align} \label{eq:62}
v^{(0)}_\ell(r,t)
=
v^{(0)}_\ell\left(R_0, t\right) + O(c_\ell^{-1}), \quad \mathrm{as}\;\; c_\ell \rightarrow \infty,
\end{align}
on this region. Combining \eqref{eq:44}, \eqref{eq:60}, \eqref{eq:62} gives \eqref{eq:68}.

On the other hand, from the sine-series representation and the zero initial data for $w_g$, we have
\begin{align*}
    \sup_{t\in[0,T]}
    \|w_g(t,\cdot)\|_{H^1(0,R_0)}
    \le
    \frac{C_{R_0}}{c_g}\|V''\|_{L^1(0,T)}.
\end{align*}
Due to \eqref{eq:44}, we have
\begin{align*}
    p^{(0)}_g(t,r)=\frac{V(t)}{R_0}+\frac{w_g(t,r)}{r}.
\end{align*}
Using $w_g(t,0)=w_g(t,R_0)=0$ and the radial Hardy identity, we obtain
\begin{align*}
    \left\|\frac{w_g(t,\cdot)}{r}\right\|_{H^1(B_{R_0})}
    \le
    C\|w_g(t,\cdot)\|_{H^1(0,R_0)}.
\end{align*}
Therefore, with the aid of \eqref{eq:56},
\begin{align}
    \sup_{t\in[0,T]}
    \left\|
    p^{(0)}_g(t,\cdot)-\frac{V(t)}{R_0}
    \right\|_{H^1(B_{R_0})}
    \le
    \frac{C_{R_0}}{c_g}\|V''\|_{L^1(0,T)} \le \frac{C_{R_0}}{c_g}. \notag
\end{align}
This, together with \eqref{eq:64} gives \eqref{eq:69}.
\end{proof}

\subsection{Relation to the frozen transmission model}
We now consider the small-$\sigma$ reduction. The limiting model is the
standard frozen-interface acoustic transmission problem posed on the equilibrium
domains. In this model, the interface geometry is fixed and no interface
displacement is retained as an independent unknown.

Before proving the main theorem of this section, Theorem \ref{prop:sigma-to-zero}, we record
the following classical elliptic lifting lemma, whose proof appears in Appendix \ref{sec:B}.

\begin{lemma}\label{lem:lifting-jump}
Let $-1/2<s< 1/2$.
Then there exists a bounded linear lifting operator
\begin{align*}
\mathcal R:H^s(S^{(0)})\to 
H^{s+1/2}(\Omega^{(0)}_g)\times H^{s+1/2}(\Omega^{(0)}_\ell),
\qquad g\mapsto G =  \mathcal R g,
\end{align*}
such that $G$ satisfies
\begin{align}
&-\nabla\cdot\left(\frac{1}{\rho_\alpha}\nabla G_\alpha\right)=0
\qquad\mathrm{in}\; \Omega^{(0)}_\alpha,\qquad \alpha\in\{\ell,g\}, \notag\\
&G_g - G_\ell =g,\qquad
\frac{1}{\rho_g}\partial_{\nu}G_g = \frac{1}{\rho_\ell}\partial_{\nu_\ell}G_\ell \quad \mathrm{on }\;S^{(0)}, \notag
\end{align}
Moreover,
\begin{align} 
\|G_\ell\|_{H^{s+1/2}(\Omega^{(0)}_\ell)}
+
\|G_g\|_{H^{s+1/2}(\Omega^{(0)}_g)}
\le C \|g\|_{H^s(S^{(0)})}.\notag
\end{align}
\end{lemma}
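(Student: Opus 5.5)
Since $S^{(0)}$ is a sphere of radius $R_0$, I would build $\mathcal R$ explicitly with spherical harmonics and read off the bound from the Fourier coefficients. Write $g=\sum_{l,m} g^{(l,m)}Y_l^m$. Because $\rho_\alpha$ is constant in each phase, the bulk equation is $\Delta G_\alpha=0$. For each mode I take the regular interior and decaying exterior harmonics:
\begin{align*}
G_g^{(l,m)}(r)=a_l\Big(\frac{r}{R_0}\Big)^{l},\qquad G_\ell^{(l,m)}(r)=b_l\Big(\frac{R_0}{r}\Big)^{l+1}.
\end{align*}
The two transmission conditions give the $2\times2$ system
\begin{align*}
a_l-b_l=g^{(l,m)},\qquad \frac{l}{\rho_g}a_l=-\frac{l+1}{\rho_\ell}b_l .
\end{align*}
Its determinant is $\frac{l+1}{\rho_\ell}+\frac{l}{\rho_g}>0$ for all $l\ge0$. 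The solution is
\begin{align*}
a_l=\frac{(l+1)/\rho_\ell}{(l+1)/\rho_\ell+l/\rho_g}\,g^{(l,m)},\qquad b_l=-\frac{l/\rho_g}{(l+1)/\rho_\ell+l/\rho_g}\,g^{(l,m)} .
\end{align*}
Both coefficients are bounded by a constant $C(\rho_\ell,\rho_g)$ times $|g^{(l,m)}|$, uniformly in $l$. For $l=0$ this gives $b_0=0$ and $G_g=g^{(0,0)}Y_0^0$ constant. Uniqueness of $G$ in the decaying class follows because each mode's system has nonzero determinant.

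I would then write $G_g=\mathcal P_g(\theta_g g)$ and $G_\ell=\mathcal P_\ell(\theta_\ell g)$. Here $\mathcal P_g$ is the harmonic (Poisson) extension into the ball, $\mathcal P_\ell$ is the decaying harmonic extension into the exterior, and $\theta_\alpha$ are the Fourier multipliers $a_l/g^{(l,m)}$ and $b_l/g^{(l,m)}$. These multipliers are bounded on every $H^s(S^{(0)})$, since they act diagonally with bounded symbols. It then suffices to show that $\mathcal P_g:H^s(S^{(0)})\to H^{s+1/2}(\Omega^{(0)}_g)$ and $\mathcal P_\ell:H^s(S^{(0)})\to H^{s+1/2}_{\rm loc}$ are bounded. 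For the range $-1/2<s<1/2$ in the statement, this follows by interpolation. The endpoint $s=1/2$ is the standard trace-lifting estimate $H^{1/2}\to H^1$. For the weaker endpoint I would use the explicit formula to compute
\begin{align*}
\|r^l Y_l^m\|_{L^2(B_{R_0})}^2\sim \frac{R_0^{3}}{2l+3}\sim \frac{R_0^3}{l}.
\end{align*}
This gives $\mathcal P_g:H^{-1/2}\to L^2$, because $\sum |g^{(l,m)}|^2 l^{-1}\sim\|g\|_{H^{-1/2}}^2$. Interpolating between $L^2$ and $H^1$ on the domain side, and between $H^{-1/2}$ and $H^{1/2}$ on the boundary side, covers all intermediate $s$.

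The main obstacle is the exterior domain, where $\Omega^{(0)}_\ell$ is unbounded and the $l=0$ exterior harmonic $r^{-1}$ is not in $L^2$. In my construction $b_0=0$ removes the monopole, so the remaining modes decay like $r^{-l-1}$ with $l\ge1$. For $l\ge1$ one has
\begin{align*}
\int_{R_0}^\infty r^{-2l-2}r^2\,dr=\frac{R_0^{3-2l}}{2l-1},
\end{align*}
which is finite. The gradient contributes an extra factor $l^2$, which again matches the $H^{1/2}$ weight. So $\mathcal P_\ell(\theta_\ell g)$ lies in $H^{s+1/2}(\Omega^{(0)}_\ell)$ globally, with the same per-mode bound $\sim l^{2s}$, and the interpolation argument applies verbatim. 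I would verify carefully the $l=1$ term, where the decay $r^{-2}$ is borderline but square-integrable, and the uniformity of the constants in $l$. If a nonzero monopole component were ever needed, the fallback is to state the exterior estimate only on $\mathcal K\setminus\overline{\Omega^{(0)}_g}$.
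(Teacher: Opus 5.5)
Your argument is correct, but it follows a different route from the paper. The paper builds $\mathcal R$ with layer potentials. A double-layer potential produces the trace jump $g$, a single-layer potential whose density solves a two-phase boundary integral equation removes the weighted conormal jump, and the Sobolev bounds are quoted from the mapping properties in McLean.

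You instead use that $S^{(0)}$ is a sphere and that the coefficients are constant in each phase. The problem then diagonalizes in spherical harmonics into $2\times 2$ systems with determinant $(l+1)/\rho_\ell+l/\rho_g>0$. This buys several things:
\begin{itemize}
\item The solvability question, which the paper delegates to the invertibility of the two-phase boundary integral operator, becomes trivial.
\item The multipliers $a_l/g^{(l,m)}\in(0,1]$ and $b_l/g^{(l,m)}\in(-1,0]$ are visibly uniform in $l$.
\item The bounds reduce to elementary per-mode integrals plus interpolation.
\end{itemize}

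Your route also makes explicit two points the paper passes over. The first is global $L^2$-integrability in the unbounded liquid region. Your $b_0=0$ is the modal form of the fact that $G_\ell$ has zero total flux, since $G_g$ is harmonic in the ball, so no $1/r$ tail occurs. The second is the meaning of the transmission conditions for $s\le 0$. There, $H^{s+1/2}$ functions have no traces in general, and the conditions are naturally read coefficientwise, i.e. as limits of $G(r\,\cdot)$ in $H^s$ as $r\to R_0^{\pm}$. What the layer-potential approach buys in return is generality: it does not depend on the spherical shape of the interface.

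There are two harmless slips. First, $\int_{R_0}^\infty r^{-2l}\,dr=R_0^{1-2l}/(2l-1)$, so with your normalization the exterior mode norm is $R_0^3/(2l-1)$. Second, the weight $l^{-1}$ should read $(l+1)^{-1}$ so that $l=0$ is included.

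In the exterior interpolation step you are implicitly using $[L^2(\Omega^{(0)}_\ell),H^1(\Omega^{(0)}_\ell)]_\theta=H^\theta(\Omega^{(0)}_\ell)$. This holds because the domain admits an extension operator to $\mathbb R^3$ that is bounded on both spaces simultaneously.
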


\begin{theorem}\label{prop:sigma-to-zero} 
 Let $s \in (0,1/2)$. Assume that $T$, $K_\alpha$ and $\rho_\alpha$, $\alpha \in \{\ell,g\}$ are fixed, and that 
\begin{align}\label{eq:11-prior}
  \mathcal I^{(0)} \in \mathcal X^{(0)}_3, \quad  \mathcal I^\perp \in D((\mathcal A^{\perp})^3).  
\end{align}
Then there exists a
constant $C>0$, independent of $\sigma$, such that
\begin{align} \label{eq:sigma_1}
\sup_{t\in[0,T]}\left(\sum^{2}_{\alpha =1 }\left\|p_\alpha(\cdot,t)-p_\alpha^{\mathrm{fro}}(\cdot,t)\right\|_{H^{s+1/2}(\Omega^{(0)}_\alpha)}\right) \le  C \sigma^{1/4-s/2}, \qquad \mathrm{as}\; \sigma \rightarrow 0.
\end{align}
Here, $p^{\mathrm{fro}}$ denotes the solution of the following frozen-interface problem
\begin{align*}
&\frac{1}{K_\alpha}\partial_{tt} p_{\alpha}^{\mathrm{fro}}
-\frac{1}{\rho_\alpha}\Delta p_\alpha^{\mathrm{fro}} =0\qquad\mathrm{in}\; \Omega^{(0)}_\alpha \times \mathbb R_+,\qquad \alpha\in\{\ell,g\},\\
& p_{\ell}^{\mathrm{fro}} = p_{g}^{\mathrm{fro}},\qquad
\frac{1}{\rho_\ell}\partial_{\nu} p_{\ell}^{\mathrm{fro}} = \frac{1}{\rho_g}\partial_{\nu} p_{g}^{\mathrm{fro}} \quad \mathrm{on}\; S^{(0)} \times \mathbb R_+,
\end{align*}
with the initial data
\begin{align*}
p_{0}^{\mathrm{fro}} = p_{0} - \sigma \mathcal R\left(\Delta_{\partial {B_ {R_0}}} + \frac{2}{R_0^2}\right)\eta_0, \qquad  \partial_t p_{0}^{\mathrm{fro}} = q_{0} - \sigma \mathcal R \left(\Delta_{\partial {B_{R_0}}} + \frac{2}{R_0^2}\right) \zeta_0,
\end{align*}
where the lifting operator $\mathcal R$ is given in Lemma \ref{lem:lifting-jump}.
\end{theorem}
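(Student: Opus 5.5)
The plan is to subtract from $p$ an elliptic lifting of the pressure jump and compare the corrected field with $p^{\mathrm{fro}}$ through an energy estimate for the frozen transmission problem. Set
\begin{align*}
g(t):=\sigma\Big(\Delta_{S^{(0)}}+\frac{2}{R_0^2}\Big)\eta(t),\qquad G(t):=\mathcal R g(t),
\end{align*}
with $\mathcal R$ from Lemma \ref{lem:lifting-jump}, applied with the sign chosen so that $[G]=[p]$. Define $w:=p-G$. Then $w$ is continuous across $S^{(0)}$. Since $G_\alpha$ is $\rho_\alpha^{-1}$-harmonic with continuous weighted flux, $w$ keeps the flux continuity of $p$. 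It satisfies
\begin{align*}
\frac1{K_\alpha}\partial_{tt}w_\alpha-\nabla\cdot\Big(\frac1{\rho_\alpha}\nabla w_\alpha\Big)=-\frac1{K_\alpha}\partial_{tt}G_\alpha .
\end{align*}
The initial data of $w$ are exactly those prescribed for $p^{\mathrm{fro}}$. Hence $e:=w-p^{\mathrm{fro}}$ solves the frozen transmission problem with zero data and source $-K_\alpha^{-1}\partial_{tt}G_\alpha=-K_\alpha^{-1}\sigma\,\mathcal R(\Delta_{S^{(0)}}+2R_0^{-2})\partial_{tt}\eta$.

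Next I estimate $e$. The frozen problem conserves the energy $\sum_\alpha\int K_\alpha^{-1}|\partial_te|^2+\rho_\alpha^{-1}|\nabla e|^2$. Duhamel's formula then bounds $\|e\|_{H^1}$ by $\int_0^T\|\partial_{tt}G\|_{L^2}\,dt$; the $L^2$ part comes from $e(0)=0$. The lemma with $s\to -1/2^{+}$, or a direct $L^2$ bound on the lifting of an $H^{-1}$-type datum, gives $\|\partial_{tt}G\|_{L^2}\lesssim\sigma\|\partial_{tt}\eta\|_{H^{3/2}(S^{(0)})}$, or a variant with a lower norm. Then $\|p-p^{\mathrm{fro}}\|_{H^{s+1/2}}\le\|e\|_{H^{s+1/2}}+\|G\|_{H^{s+1/2}}$, and by Lemma \ref{lem:lifting-jump} $\|G\|_{H^{s+1/2}}\le C\sigma\|(\Delta_{S^{(0)}}+2R_0^{-2})\eta\|_{H^s}\lesssim\sigma\|\eta\|_{H^{s+2}}$.

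The main step is controlling $\eta$ and its time derivatives in terms of $\sigma$, since energy conservation \eqref{eq:gronwall-step2} only gives weighted bounds. For the shape sector, the conserved energy applied to $U,\mathcal A^\perp U,(\mathcal A^\perp)^2U$ (with data in $D((\mathcal A^\perp)^3)$) controls $\sigma\|\nabla_{S^{(0)}}\partial_t^k\zeta\|^2_{L^2}$ uniformly, up to the indefinite $-2\sigma R_0^{-2}$ term, which is harmless on $\mathcal Y_{\ge2}$. It also controls $\|\partial_t^k\eta\|_{H^1}$ through the unweighted part of the norm. The resulting size is $\sigma\|\eta\|_{H^{s+2}}$ with only $\sqrt\sigma$-weighted $H^1$ control of $\zeta$ available.

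I would interpolate between $\|\eta\|_{H^1}\le C$ and $\sigma\|\eta\|^2_{H^2}\lesssim C$, the latter obtained from the jump relation $[p]=\sigma(\Delta+2R_0^{-2})\eta$ together with $[p]\in H^{1/2}$ from the bulk energy. Following this route gives $\sigma\|\eta\|_{H^{s+3/2}}\lesssim\sigma\cdot\sigma^{-(s+1/2)/2}$ type bounds. Balancing the pieces is where the exponent $1/4-s/2=\tfrac12(\tfrac12-s)$ should emerge. Concretely, the estimate is $\|G\|_{H^{s+1/2}}\lesssim\|[p]\|_{H^s}\le\|[p]\|_{L^2}^{1/2-s}\|[p]\|_{H^{1/2}}^{1/2+s}$, with $\|[p]\|_{L^2}\lesssim\sigma\|\eta\|_{H^2}\lesssim\sqrt\sigma$ and $\|[p]\|_{H^{1/2}}\le C$. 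This yields $\sigma^{(1/2-s)/2}$, matching \eqref{eq:sigma_1}; the same interpolation applies to $\partial_{tt}G$ in the Duhamel term, using $(\mathcal A^\perp)^2U$.

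The breathing mode is handled separately with Theorem \ref{thm:wellposed-B} at $m=3$. There the jump $\frac{2\sigma}{R_0}\eta^{(0)}$ is directly $O(\sigma)$ on $[0,T]$, with $\eta^{(0)}$ bounded uniformly for $\sigma\le\sigma_*$ by the uniform multiplier bound of Lemma \ref{le:multiplier}.

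The obstacle I expect is justifying $\sigma$-uniform bounds on $\|[\partial_t^k p]\|_{H^{1/2}}$ and $\sqrt\sigma\|\partial_t^k\eta\|_{H^2}$ from the energy. The $\mathcal O_{\ge1}$ inner product contains a $\sigma$-independent $H^1$ norm on $\eta$, so the initial energies must stay bounded as $\sigma\to0$. I would first try to check that the data norms in $D((\mathcal A^\perp)^3)$ are bounded uniformly in $\sigma$.
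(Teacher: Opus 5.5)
Your reduction is the same as the paper's. You subtract the lifting $G=\mathcal R[p]$ of the jump and observe that $p-G-p^{\mathrm{fro}}$ solves the frozen transmission problem with zero data and source $-K_\alpha^{-1}\partial_{tt}G_\alpha$. You then apply the frozen energy estimate with Duhamel. That reduces everything to bounds on $\|\partial_t^k[p]\|_{H^s(S^{(0)})}$, $k=0,1,2$, obtained by interpolating a $\sigma$-small weak norm against a bounded $H^{1/2}$ norm. The breathing mode is handled the same way too.

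The genuine gap is the smallness input. You assert $\|[p]\|_{L^2}\lesssim\sigma\|\eta\|_{H^2}\lesssim\sqrt\sigma$, and you attribute $\sigma\|\eta\|_{H^2}^2\lesssim1$ to the jump relation plus $[p]\in H^{1/2}$. Those two facts give only $\sigma\|\Pi_{\ge2}\eta\|_{H^{5/2}}\lesssim1$, i.e.\ $\|[p]\|_{L^2}=O(1)$.

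There is also no $\sigma$-uniform bound on $\|\eta\|_{H^1}$. The conserved quantity is $\|\cdot\|_{\mathcal E}$, from which exactly the $\|\eta\|_{H^1}^2$ and $\|p\|_{L^2}^2$ terms are removed. So conservation for $U$ and $\mathcal A^\perp U$ gives only $\sqrt\sigma\,\|\Pi_{\ge2}\partial_t^k\eta\|_{H^1}\lesssim1$ for $k=1,2$, and hence $\|\Pi_{\ge2}\eta(t)\|_{H^1}\lesssim\|\eta_0\|_{H^1}+T\sigma^{-1/2}$.

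Interpolating what you actually have gives $\sigma\|\Pi_{\ge2}\eta\|_{H^2}\lesssim\sigma^{1/6}$. With the correct $L^2$--$H^{1/2}$ exponents $1-2s$ and $2s$, this yields $\|[p]\|_{H^s}\lesssim\sigma^{(1-2s)/6}$, which is weaker than the claimed $\sigma^{(1-2s)/4}$.

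Your exponents $\tfrac12-s,\tfrac12+s$ belong to the pair $H^{-1/2},H^{1/2}$. The inequality is still true, since $\|\cdot\|_{H^{-1/2}}\le\|\cdot\|_{L^2}$. But it shows that the stated rate really needs $\|\partial_t^k[p]\|_{H^{-1/2}}\lesssim\sqrt\sigma$, i.e.\ $\sqrt\sigma\,\|\Pi_{\ge2}\partial_t^k\eta\|_{H^{3/2}}\lesssim1$. That is half a derivative more than the energy controls. You flag this as the expected obstacle at the end, but the body treats it as settled, and it is exactly where the exponent comes from.

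Following the paper will not close this. The paper records what the energy honestly gives, $\|\partial_t^k[p^\perp]\|_{H^{-1}}\le C\sigma\|\partial_t^k\eta^\perp\|_{H^1}\le C\sqrt\sigma$. It then applies \eqref{eq:interpolation}, whose exponents are again those of the $H^{-1/2}$--$H^{1/2}$ pair; for $\theta<1$ that inequality fails on a single $Y_l^m$ with $l$ large.

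One way to supply the missing regularity is the spherical symmetry:
\begin{enumerate}
\item The rotation fields $\Omega_j=(x\times\nabla)_j$ commute with the bulk operators, with $\Delta_{S^{(0)}}$ and with the projections, hence with $\mathcal S(t)$.
\item Conservation of $\|\cdot\|_{\mathcal E}$ for $\Omega_jU$ and $\Omega_j\mathcal A^\perp U$ then gives $\sqrt\sigma\,\|\Pi_{\ge2}\partial_t^k\eta\|_{H^2}\lesssim1$ for $k=1,2$.
\item So $\|\partial_t^k[p]\|_{L^2}\lesssim\sqrt\sigma$ genuinely holds for $k=1,2$, and $[p](t)=[p_0]+\int_0^t\partial_t[p]$ handles $k=0$.
\item Your scheme then closes, and with the sharp exponents it even gives rate $\sigma^{(1-2s)/2}$.
\end{enumerate}

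In any version you must add the hypothesis you suspected is needed. Since $D((\mathcal A^\perp)^3)$ depends on $\sigma$, the relevant data quantities must be bounded uniformly in $\sigma$. These are the energies of $U_0$, $\mathcal A^\perp U_0$, $(\mathcal A^\perp)^2U_0$ and of their $\Omega_j$-derivatives, together with $\|[p_0]\|_{L^2}\lesssim\sqrt\sigma$.
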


\begin{proof}
Throughout the proof, we assume that $\delta >0$ is sufficiently small.
Let $U^{\perp}(t)$ and 
\begin{align*}
\left(p_{\ell}^{(0)}(r,t)Y^0_0(\hat x),p_{g}^{(0)}(r,t)Y^0_0(\hat x), q_\ell^{(0)}(r,t) Y^0_0(\hat x), q_{g}^{(0)}(r,t)Y^0_0(\hat x),\eta^{(0)}(t)Y^0_0(\hat x), \zeta^{(0)}(t)Y^0_0(\hat x)\right)
\end{align*}
denote the solution corresponding to the initial data $\mathcal I^\perp$ and $\mathcal I^{(0)}$, respectively.

Proceeding as in the derivation of \eqref{eq:2-pri}, using Theorem \ref{thm:wellposed-A} and applying \eqref{eq:11-prior} gives 
\begin{align*}
\mathrm{sup}_{t\in [{0,T}]} \left(\sum^{2}_{l=1}\left\|\partial^l_t p^{\perp}(\cdot,t)\right\|_{H^1(\Omega^{(0)}_\ell\cup \Omega^{(0)}_g)} \right) \le C, \quad \mathrm{sup}_{t\in [{0,T}]} \left(\sum^{3}_{l=0}\left\|\partial^3 _{t}\eta^{\perp}(\cdot,t)\right\|_{H^1({S^{(0)}})}\right) \le \frac{C}{\sqrt \sigma}.
\end{align*}
In conjunction with the jump condition \eqref{eq:jump-interface-final}, we obtain that 
\begin{align} 
    &\mathrm{sup}_{t\in [{0,T}]} \left(\sum^{2}_{l=1}\left\|\partial^l_t [p^{\perp}](\cdot,t)\right\|_{H^{-1}(S^{(0)})} \right) \le C\sqrt \sigma, \label{eq:65}\\&\mathrm{sup}_{t\in [{0,T}]} \left(\sum^{2}_{l=1}\left\|\partial^l_t [p^{\perp}](\cdot,t)\right\|_{H^{1/2}(S^{(0)})} \right) \le C. \label{eq:19}
\end{align}
Furthermore, using \eqref{eq:jump-interface-final}, \eqref{eq:11-prior} and Theorem \ref{thm:wellposed-B}, we have
 \begin{align*}
\mathrm{sup}_{t\in [{0,T}]} \left(\sum^{2}_{l=1}\left|\partial^l_t [p^{(0)}](R_0,t)\right|\right) \le  C \sigma \mathrm{sup}_{t\in [{0,T}]} \left(\sum^{2}_{l=1} |\eta^{(0)}(t)|\right)\le C\sigma.
 \end{align*}
This, together with \eqref{eq:65}, \eqref{eq:19} and interpolation
\begin{align} \label{eq:interpolation}
\|g(t)\|_{H^{\theta-1/2}(S^{(0)})}
\le
C
\|g(t)\|_{H^{-1}(S^{(0)})}^{1-\theta}
\|g(t)\|_{H^{1/2}(S^{(0)})}^{\theta}, \quad \theta \in (0,1)
\end{align}
gives
\begin{align*}
\mathrm{sup}_{t\in [{0,T}]} \left(\sum^{2}_{l=1}\left\|\partial^l_t [p](\cdot,t)\right\|_{H^s(S^{(0)})} \right) \le C \sigma^{1/4-s/2}, \quad s = \theta - 1/2\; \textrm{with}\; \theta \in (1/2,1).
\end{align*}
Let $p^{\mathrm{dif}}:= \mathcal R [p]$. Since $\mathcal R$ is linear and time-independent, we have that 
\begin{align*}
\partial_t p^{\mathrm{dif}} = \mathcal R \partial_t[p],\qquad \partial_{tt} p^{\mathrm{dif}} = \mathcal R \partial_{tt}[p],
\end{align*}
and that 
\begin{align}
\left\|\partial^l_t p^{\mathrm{dif}}(\cdot,t)\right\|_{H^{s+1/2}(\Omega_\ell \cap \Omega_g)}
\le C\|\partial^l_t [p](\cdot,t)\|_{H^s(\Gamma)}, \quad t\in [0,T], \;\; l\in\{1,2\}.\notag
\end{align}
Combining this with \eqref{eq:interpolation} gives 
\begin{align} \label{eq:diff}
\mathrm{sup}_{t\in [{0,T}]} \left(\sum_{l=0}^2\left\|\partial^l_t p^{\mathrm{dif}}(\cdot,t)\right\|_{H^{s+1/2}(\Omega_\ell \cap \Omega_g)}\right) \le C \sigma^{1/4-s/2}.
\end{align}
Furthermore, $p - p^{\mathrm{dif}} - p^{\mathrm{fro}}$ solves the frozen-interface problem with the right-hand source $-\partial_{tt}p^{\mathrm{dif}}$ together with the vanishing initial data. Together with the well-posedness estimate for the frozen-interface transmission
model with vanishing initial data, estimate \eqref{eq:diff} implies \eqref{eq:sigma_1}.

\end{proof}

\section{Resonance regimes}
\label{sec:resonance-connection}

In this section, we discuss resonances near the real axis and explain how the present
model is related to several classical resonance mechanisms arising in bubble dynamics and high-contrast transmission problems.

The full linearized model keeps the displacement of the interface as an explicit unknown. In this formulation, the bulk acoustic fields are coupled to the interface motion through a dynamic boundary condition. This allows one to
recover, at the linearized level, the classical Minnaert-type breathing mode as well as shape oscillations of Lamb type.

Another standard reduction arises from the high-contrast nature of gas bubbles
in liquids. When the interface motion is neglected, the bubble is modeled as a
fixed high-contrast inclusion, leading to a frozen-interface transmission problem. In the regime, the corresponding scattering resonances are closely related to the interior
Neumann spectrum. The first subwavelength resonance reproduces the
Minnaert-type mode, whereas higher resonances may be interpreted as
Fabry--P\'erot-type modes.

The purpose of this section is to make this connection explicit and to indicate how the different resonance regimes arise from the same linearized fluid--bubble formulation.

We now pass to the frequency domain. For simplicity, we again consider a single
spherical bubble $B_{R_0}$ of a radius $R_0$ centered at $0$ and we use the notation introduced at the beginning of Section \ref{sec:reduced-models}.
We focus on the time-harmonic regime $e^{-i\omega t}$. A resonance condition is a complex frequency $\omega$ for which the corresponding
homogeneous problem admits a nontrivial solution $(u_\ell, u_{g}, \hat \eta)$, namely
\begin{align}
\Delta u_\alpha + k_\ell^2 u_\alpha = 0\quad &\text{in } \Omega^{(0)}_\alpha, \quad \alpha \in \{\ell,g\},  \label{eq:time_harmonic_liquid}\\
\frac{1}{\rho_\ell}\partial_n u_\ell
= \frac{1}{\rho_g}\partial_n u_g = \omega^2 \hat\eta, \quad & \text{on}\; S^{(0)},
\label{eq:10-kinematic-freq}\\
u_\ell - u_g
= {\sigma}\left(\Delta_{S^{(0)}}  + \frac{2}{R_0^2}\right)\hat\eta, \quad &\text{on}\;S^{(0)},
\label{eq:10-dynamic-freq}\\
u_\ell\quad \mathrm{is}\; k_\ell\; \mathrm{outgoing}. \notag
\end{align}
Here, $k_\alpha^2 = {\omega^2}/{c_\alpha^2}$ for $\alpha\in\{\ell,g\}$.
To analyze this resonance problem, we use the spherical symmetry of the
configuration. We write every point on the equilibrium interface
$S$ as
\begin{align*}
x=R_0\hat x,\qquad 
\hat x=(\sin\theta\cos\varphi,\sin\theta\sin\varphi,\cos\theta)\in\mathbb S^2,
\end{align*}
where $(\theta,\varphi)$ are the usual spherical coordinates. We expand the
interface displacement and the boundary traces in spherical harmonics $Y_l^m$ on $\mathbb S^2$. Namely,
\begin{align*}
\widehat{\eta}(R_0\hat x)
=
\sum_{l=0}^{\infty}\sum_{m=-l}^{l}
\eta^{(l,m)}Y_l^m(\hat x),
\end{align*}
and similarly,
\begin{align*}
u_\ell(R_0\hat x)
= \sum_{l=0}^{\infty}\sum_{m=-l}^{l}
u^{(l,m)}_{\ell}Y_l^m(\hat x),
\qquad
u_g(R_0\hat x)
=
\sum_{l=0}^{\infty}\sum_{m=-l}^{l}
u^{(l,m)}_{g}Y_l^m(\hat x).
\end{align*}
Since $u_\ell$ is the outgoing solution of the exterior Helmholtz equation, while $u_g$ solves the interior Helmholtz equation; see \eqref{eq:time_harmonic_liquid}, the radial dependence of each
$(l,m)$-mode is given by 
\begin{align*}
u^{(l,m)}_\ell(\theta,\varphi) = A^{(l,m)} h_l^{(1)}(k_\ell R_0) Y_l^m(\theta,\varphi),
\qquad
u^{(l,m)}_g(\theta,\varphi) =  C^{(l,m)}  j_l(k_g R_0) Y_l^m(\theta,\varphi),
\end{align*}
where $j_l$ denotes the spherical Bessel function of the first kind and
$h_l^{(1)}$ denotes the spherical Hankel function of the first kind.
Substituting these expansions into \eqref{eq:10-kinematic-freq}--\eqref{eq:10-dynamic-freq} and using the orthogonality of
the spherical harmonics, we obtain,
for each $(l,m)$, a $3\times 3$ linear system for $\left(A^{(l,m)}, C^{(l,m)}, \eta^{(l,m)}\right)$:
\begin{align}
 A^{(l,m)} h_l^{(1)}(k_\ell R_0)
-  C^{(l,m)} j_l(k_g R_0)
- \frac{\sigma}{R_0^2}(l+2)(1-l) \eta^{(l,m)}  &= 0,
\label{eq:10-BC1-lm}\\
\frac{k_\ell}{\rho_\ell} A^{(l,m)} {h_l^{(1)}}'(k_\ell R_0)
- \omega^2  \eta^{(l,m)}  &= 0,
\label{eq:10-BC2-lm}\\
\frac{k_g}{\rho_g}  C^{(l,m)}  j_l'(k_g R_0)
- \omega^2 \eta^{(l,m)} &= 0.
\label{eq:10-BC3-lm}
\end{align}
Building upon the above equations \eqref{eq:10-BC1-lm}, \eqref{eq:10-BC2-lm} and \eqref{eq:10-BC3-lm}, we can readily obtain that the existence of nontrivial $\omega$ if and only if $\omega$ is a non-zero solution of the following equation 
\begin{align}
\omega^2 h_l^{(1)}(k_\ell R_0){{j'_l}(k_g R_0)} - \omega^2 \frac{k_\ell}{\rho_\ell} \frac{\rho_g}{k_g}{{h_l^{(1)}}'(k_\ell R_0) j_l(k_g R_0)} =
\frac{\sigma}{R^2_0}(l+2)(1-l)\frac{k_\ell}{\rho_\ell}{h_l^{(1)}}'(k_\ell R_0){{j'_l}(k_g R_0)}.\notag
\end{align}
We note that ${h_\ell^{(1)}}'(k_\ell R_0) \ne 0$ when $k_\ell$ is near the real axis, we can write the above equation as 
\begin{align}\label{eq:10-dispersion-full}
    \mathcal J^{(l)}(\omega) = 0.
\end{align}
Here,  
\begin{align*}
&\mathcal J^{(l)}(\omega):=\omega^2 {{j'_l}(k_g R_0)} \frac{1}{k_\ell}\frac{h_l^{(1)}(k_\ell R_0)}{{h_l^{(1)}}'(k_\ell R_0)} - \omega^2\frac{1}{\rho_\ell}\frac{\rho_g}{k_g}{j_l(k_g R_0)} + (l+2)(l-1)\frac{\sigma}{R^2_0}\frac1{\rho_\ell} {{j'_l}(k_gR_0)}.
\end{align*}

Now we state the main theorem of this section.

\begin{theorem} \label{prop:resonance}
Let $\sigma, \rho_\ell,\; R_0 > 0$ be fixed. The following arguments hold true. 
\begin{enumerate}[(a)]
\item \label{b1}
Assume that the physical parameters satisfy
\begin{align*}
0< {\rho_g} \le \beta < \infty,
\end{align*}
where $\beta$ is independent of $c_g$ and $c_\ell$.
For each mode $l\ge 2$, there exist Lamb-type resonances
satisfying the asymptotic expansion below.
\begin{align} \label{eq:asym_l}
    w^{(l),\pm}_L = \pm \sqrt{\frac{(l+2)(l-1){\sigma}}{\left(\frac{\rho_\ell}{l+1} + \frac{\rho_g}{l}\right)R^3_0}} + O\left(\max\left(c^{-1}_g, c_l^{-1}\right)\right), \;\;\textrm{as}\; c_g, c_\ell \rightarrow \infty.
\end{align}
\item \label{b2}
Assume that
$K_g\in (K_-,K_+)$, where $K_\pm$ are independent of $c_g $ and $c_l$. There exist Minnaert resonances, whose asymptotic behavior are given by
\begin{align} \label{eq:asym_M}
\omega^{\pm}_M =  \pm \sqrt{\frac{3 K_g}{\rho_l R_0^2} - \frac{2\sigma}{\rho_l R^3_0}} + O\left(\max\left(c^{-1}_g, c^{-1}_l\right)\right), \quad \textrm{as}\; c_g, c_\ell \rightarrow \infty.
\end{align}
\item \label{b3}
Assume that $c_\ell \in (c_-, c_+)$ and  $c_g\in (c_-, c_+)$, where $c_\pm$ are independent of $\rho_g$. For each model $l\ge 1$, there exist Fabry--P\'erot-type resonances satisfying the  asymptotic expansion:
\begin{align} \label{eq:35}
  w^{(l,\pm)}_F:= \pm \lambda^{(l)} + O(\rho_g)\quad \mathrm{as}\; \rho_g \rightarrow 0,
\end{align}
where $\left(\lambda^{(l)}\right)^2/c_g^2$ denotes the Neumann eigenvalue of $-\Delta$ in $B_{R_0}$ associated with the $l$-th modes.
\end{enumerate}
Here and in the above asymptotic formulas, the constants implicit in the $O(\cdot)$-terms are independent of the limiting parameters in the corresponding regime.
\end{theorem}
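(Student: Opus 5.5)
The proof works mode by mode with the scalar dispersion function $\mathcal J^{(l)}$ from \eqref{eq:10-dispersion-full}, read as an analytic function of $\omega$ near the real axis. In each regime I will rescale so that $\mathcal J^{(l)}$ becomes a small perturbation of an explicit limiting function $\mathcal J^{(l)}_0$ that has simple real zeros at the claimed frequencies. Rouch\'e's theorem on small discs around these zeros then gives existence of nearby zeros and the stated error bounds. The work therefore lies in the small-argument and limiting expansions of $j_l$, $j_l'$, $h_l^{(1)}$ and ${h_l^{(1)}}'$, with remainders controlled uniformly on compact $\omega$-sets.

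\emph{(a) Lamb branch, $l\ge2$, $c_g,c_\ell\to\infty$.} For fixed $\omega$ in a compact set, $k_\alpha R_0=\omega R_0/c_\alpha\to0$. The small-argument asymptotics give
\[
\frac{h_l^{(1)}(z)}{z\,{h_l^{(1)}}'(z)}=-\frac{1}{l+1}+O(z^2),\qquad
\frac{j_l(z)}{z\,j_l'(z)}=\frac1l+O(z^2).
\]
Dividing $\mathcal J^{(l)}$ by $j_l'(k_gR_0)\ne0$ and multiplying by $R_0$ yields
\[
\frac{R_0\mathcal J^{(l)}(\omega)}{j_l'(k_gR_0)}
=-\frac{R_0^2\omega^2}{\rho_\ell}\Big(\frac{\rho_\ell}{l+1}+\frac{\rho_g}{l}\Big)+\frac{(l+2)(l-1)\sigma}{\rho_\ell R_0}+O\big(\max(c_g^{-2},c_\ell^{-2})\big).
\]
The odd Hankel term, of relative size $O(c_\ell^{-1})$ through $ik_\ell R_0$ corrections, is included in this remainder. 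Because $\rho_g\le\beta$, the limit polynomial has simple zeros at $\pm\omega_l$ with derivative bounded away from zero, uniformly in $\rho_g$. Rouch\'e's theorem, or the implicit function theorem in $(\omega,c_g^{-1},c_\ell^{-1})$, then gives \eqref{eq:asym_l}.

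\emph{(b) Minnaert branch, $l=0$.} Here the surface term carries the factor $(l+2)(l-1)=-2$, and $j_0'=-j_1$. The expansions are
\[
j_0(z)\approx1,\qquad j_0'(z)\approx -z/3,\qquad \frac{h_0^{(1)}(z)}{z\,{h_0^{(1)}}'(z)}=-\frac{1}{1-iz}.
\]
With $\rho_g=K_g/c_g^2$, the gas term $\omega^2\rho_g j_0/k_g$ equals $\omega K_g/c_g$. Dividing $\mathcal J^{(0)}$ by $k_gR_0/3$ gives, at leading order,
\[
\frac{R_0\omega^2}{3}\,\frac{1}{1}\cdot\frac{1}{\text{}}\Big(\tfrac{R_0}{1}\Big)\!\!\;\to\; \frac{R_0^2\omega^2}{3}-\frac{K_g}{\rho_\ell}+\frac{2\sigma}{3\rho_\ell R_0}.
\]
This vanishes exactly at $\omega^2=\Omega_M^2$. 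The remainders are $O(c_g^{-1}+c_\ell^{-1})$, uniformly for $K_g\in(K_-,K_+)$. Rouch\'e's theorem gives \eqref{eq:asym_M}; this requires $\Omega_M^2\neq0$, and if $\Omega_M^2<0$ the zeros are the purely imaginary pair instead.

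\emph{(c) Fabry--P\'erot branch, $\rho_g\to0$ with $c_\alpha$ fixed.} Only the middle term of $\mathcal J^{(l)}$ carries $\rho_g$, so $\mathcal J^{(l)}=\mathcal J^{(l)}_0+\rho_g\,\mathcal J^{(l)}_1$ with $\mathcal J_1^{(l)}$ analytic and independent of $\rho_g$ (the sound speeds are held fixed). The limit function is
\[
\mathcal J^{(l)}_0(\omega)=j_l'(k_gR_0)\Big[\frac{\omega^2}{k_\ell}\frac{h_l^{(1)}}{{h_l^{(1)}}'}(k_\ell R_0)+\frac{(l+2)(l-1)\sigma}{\rho_\ell R_0^2}\Big].
\]
It vanishes where $j_l'(\lambda R_0/c_g)=0$, which are precisely the Neumann eigenvalues of the ball in the $l$-th mode.

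The main obstacle is to show that such a zero is simple and isolated as a zero of $\mathcal J_0^{(l)}$. Simplicity of zeros of $j_l'$ for $l\ge1$ is classical. The bracket is nonzero at real $\lambda$ because $\operatorname{Im}\big(h_l^{(1)}/{h_l^{(1)}}'\big)\neq0$ for real positive argument (a Wronskian identity), while the surface term is real; this is where the outgoing condition enters. Given simplicity, Rouch\'e's theorem on a disc of radius $C\rho_g$ gives \eqref{eq:35}.
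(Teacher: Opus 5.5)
Your proposal is correct and follows essentially the paper's route: small-argument expansions of the Bessel/Hankel ratios in $\mathcal J^{(l)}$ plus Rouch\'e's theorem on shrinking discs for (a) and (b). For (c) you use the splitting $\mathcal J^{(l)}=\mathcal J^{(l)}_0+\rho_g\mathcal J^{(l)}_1$ with $\mathcal J^{(l)}_0=j_l'(k_gR_0)\,[\cdots]$, keeping the bracket nonzero via $\operatorname{Im}\bigl(h_l^{(1)}/{h_l^{(1)}}'\bigr)\neq0$; this factorization is cleaner than the paper's, whose $g_0$ still contains the vanishing factor $j_l'$. Minor remarks: the garbled left-hand side of your Minnaert display should be replaced by the actual computation; for $l\ge2$ the Hankel correction is $O\bigl((k_\ell R_0)^2\bigr)$, with the imaginary part entering only at order $(k_\ell R_0)^{2l+1}$, not $O(c_\ell^{-1})$ as you say, though this is harmless; and your caveat $\Omega_M^2\neq0$ is genuinely needed, uniformly for $K_g\in(K_-,K_+)$, to get the uniform $O(\cdot)$ bound in (b), a hypothesis the paper's statement leaves implicit.
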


\begin{proof}
\eqref{b1}
It is well known that
\begin{align}
&{t}\frac{j_0(t)}{j'_0(t)} = -{3} + O(t^2), \quad \label{eq:29}\\
& \frac{1}t \frac{j_l(t)}{j'_l(t)} = \frac{1}{l} + O\left({t^2}\right), \quad l \ge 1,\; \mathrm{as}\; t\rightarrow 0 \notag
\end{align}
and 
\begin{align}\label{eq:27}
& \frac{1}t \frac{h^{(l)}_0(t)}{{h^{(l)}_0}'(t)} =
\begin{cases}
-1 + O(t), \\
-\frac{1}{l+1} + O(t^2), \quad \mathrm{for}\;  l \ge 1, \; \text{as}\; t\rightarrow 0,
\end{cases}
\end{align}

For each $l \ge 1$, using the above asymptotic identities and \eqref{eq:10-dispersion-full}, we obtain
\begin{align}\label{asym_1}
\omega^2\left(\frac{1}{l+1} + \frac{\rho_g}{\rho_\ell} \frac{1}{l}\right) + \omega^2 O(\max(k_g^2, k_l^2)) = 
(l+2)(l-1)\frac{\sigma}{R^3_0}\frac1{\rho_\ell} \quad l \ge 1,  \;   \textrm{as}\; k_g, k_\ell\rightarrow 0.
\end{align}
Let
\begin{align*}
\delta:=\max(k_g^2,k_\ell^2),\qquad w_{\mathrm{dom}}^{(l)}:=\sqrt{\frac{(l+2)(l-1){\sigma}}{\left(\frac{\rho_\ell}{l+1} + \frac{\rho_g}{l}\right)R^3_0}}.
\end{align*}
By \eqref{asym_1}, in a fixed neighbourhood of the two unperturbed roots
$\omega_{\mathrm{dom}}^{(l,\pm)} =
\pm \omega_{\mathrm{dom}}^{(l)}$, $J^{(l)}(\omega)$ can be written as
\begin{align*}
J^{(l)}(\omega)
= F^{(l)}(\omega) +
F^{(l)}_{\mathrm{rem}}(\omega),
\end{align*}
where
\begin{align*}
 F^{(l)}(\omega):= \omega^2\left(\frac{1}{l+1} + \frac{\rho_g}{\rho_\ell} \frac{1}{l}\right) - (l+2)(l-1)\frac{\sigma}{R^3_0}\frac1{\rho_\ell} \quad \mathrm{and}\quad  F^{(l)}_{\mathrm{rem}}(\omega):= \mathcal J(\omega) - F^{(0)}(\omega).
\end{align*}
with 
\begin{align*}
|F^{(l)}_{\mathrm{rem}}(\omega)|
\le
C_0\delta,
\end{align*}
uniformly for $\omega$ in this neighbourhood, as $k_g,k_\ell\to0$. Here, $C_0>0$ is independent of $\delta$.
Since the zeros $\omega_{\mathrm{dom}}^{(l,\pm)}$ of $F^{(l)}$ are simple, there exist constants $C_1>0$ and $r_0>0$, independent of $\delta$, such that
\begin{align*}
\left|F^{(l)}(\omega)\right|
\ge
C_1\left|\omega-\omega_{\mathrm{dom}}^{(l,\pm)}\right|\quad
\mathrm{whenever}\;\;
\left|\omega-\omega_{\mathrm{dom}}^{(l,\pm)}\right|\le r_0 .
\end{align*}
Choose $M>0$ so large that $ C_0 < C_1 M.$
For $\delta>0$ sufficiently small, define the circles
\begin{align*}
\Gamma^{\pm}_{l,\delta}
:= \left\{
\omega\in\mathbb C:
\left|\omega-\omega_{\mathrm{dom}}^{(l,\pm)}\right| = M\delta \right\}.
\end{align*}
On $\Gamma^{\pm}_{l,\delta}$, we have
\begin{align*}
\left|F^{(l)}(\omega)\right|
\ge cM\delta > C\delta \ge
|F^{(l)}_{\mathrm{rem}}(\omega)|.
\end{align*}
Therefore, by Rouch\'e's theorem, $\mathcal J^{(l)}$ and $F^{(l)}$ have the same number of zeros inside each circle $\Gamma^{\pm}_{l,\delta}$. Since $F^{(l)}$ has exactly one zero inside $\Gamma^{\pm}_{l,\delta}$, namely $w^{(l),\pm}_L$, $\mathcal J^{(l)}$ also has exactly one zero $w^{(l),\pm}_L$ inside $\Gamma^{\pm}_{l,\delta}$. Consequently,
\begin{align*}
\left|w^{(l),\pm}_L - \omega_{\mathrm{dom}}^{(l,\pm)}\right|
\le M\max(k_g^2,k_\ell^2).
\end{align*}
This proves \eqref{eq:asym_l}.

\eqref{b2} For the case of monopole mode ($l = 0$), using \eqref{eq:10-dispersion-full}, \eqref{eq:29} and \eqref{eq:27}, we have 
\begin{align}\label{eq:33} 
\omega^2(1 + O(\max(k_g, k_l)) = \frac{\rho_g}{\rho_\ell} \frac{3c_g^2}{R^2_0}(1+ O(\max(k_g^2, k_l^2))) - \frac{2\sigma}{\rho_l R^3_0},\; \quad \textrm{as}\; k_g, k_\ell \rightarrow 0.
\end{align}
Let 
\begin{align*}
 F^{(0)}(\omega):= \omega^2 - \left(\frac{\rho_g}{\rho_\ell}\frac{3c_g^2}{R^2_0} -\frac{2\sigma}{R^3_0}\frac1{\rho_\ell}\right). 
\end{align*}
Then, the proof follows by the same argument as that of \eqref{eq:asym_l}: using the expansion \eqref{eq:33} and  applying Rouch\'e's theorem on a shrinking contour around the roots of $F^{(0)}(\omega)$, we readily obtain \eqref{eq:asym_M}.

\eqref{b3} 
We note that ${h_\ell^{(1)}}(k_\ell R_0) \ne 0$  near the real-axis.
Then, we can rewrite 
\eqref{eq:10-dispersion-full} as
\begin{align} \label{eq:34}
     \omega^2 {{j'_l}(k_g R_0)} - \omega^2\frac{1}{\rho_\ell}\frac {\rho_g}{k_g}{j_l(k_g R_0)} {k_\ell}\frac {{h_l^{(1)}}'(k_\ell R_0)}{h_l^{(1)}(k_\ell R_0)} = -(l+2)(l-1)\frac{\sigma}{R^2_0}\frac1{\rho_\ell} {{j'_l}(k_g R_0)} {k_\ell}\frac {{h_l^{(1)}}'(k_\ell R_0)}{h_l^{(1)}(k_\ell R_0)}.  
\end{align}
Since
\begin{align*}
\operatorname{Im}\left(
{h_l^{(1)'}(r)}/{h_l^{(1)}(r)}
\right) > 0
\qquad \text{for every real } r\neq 0,
\end{align*}
the ratio 
$h_l^{(1)'}(r)/h_l^{(1)}(r)$ is not real on the real axis.
Therefore, for each of the points $\pm\lambda^{(l)}$, one can choose a
punctured neighborhood in which $g_0(\omega)  \ne 0$. Here, $g_0(\omega)$ is defined as follows:
\begin{align*}
g_0(\omega):=\left(\omega^2 + (l+2)(l-1)\frac{\sigma}{R^2_0}\frac1{\rho_\ell}{k_\ell}\frac {{h_l^{(1)}}'(k_\ell R_0)}{h_l^{(1)}(k_\ell R_0)} \right){{j'_l}(k_g R_0)}.
\end{align*}
Then, we can rewrite \eqref{eq:34} as
\begin{align*}
 {{j'_l}(k_g R_0)} - \rho_g g_1(\omega) = 0, 
\end{align*}
where
\begin{align*}
   g_1(\omega):= \omega^2\frac{1}{\rho_\ell}\frac {1}{k_g}{j_l(k_g R_0)} {k_\ell}\frac {{h_l^{(1)}}'(k_\ell R_0)}{h_l^{(1)}(k_\ell R_0) g_0(\omega)}.
\end{align*}
Then, proceeding as in the derivation of \eqref{eq:asym_l}: applying Rouch\'e's theorem on a shrinking contour around the roots of ${{j'_l}(k_g R_0)})$, we readily obtain \eqref{eq:35}.
\end{proof}

\section*{Acknowledgment}
This work is supported by the Austrian Science Fund (FWF) grant P: 36942. 

\begin{appendices}
    
\section {Proofs of Lemma \ref{le:multiplier} and inequalities (\ref{eq:30})--(\ref{eq:31})}
\label{sec:A}

\begin{proof}[Proof of Lemma \ref{le:multiplier}]
By a straightforward calculation, we have 
\begin{align*}
    sC_\ell(s) = \frac{\rho_\ell R_0^2s}{1+{sR_0}/{c_\ell}}.
\end{align*}
Writing $\alpha_\ell:= {R_0}/{c_\ell}$, we obtain
\begin{align*}
\operatorname{Re}\left(sC_\ell(s)\right) =
 \rho_\ell R_0^2 \frac{\gamma + \alpha_\ell |s|^2}
{|1+\alpha_\ell s|^2}.
\end{align*}
Since 
\begin{align*}
 \frac{\gamma + \alpha_\ell |s|^2}
{|1 + \alpha_\ell s|^2} -  \frac{\gamma}
{1 + \alpha \gamma} = \frac{\alpha \xi^2}{|1+\alpha_\ell s|^2 (1+\alpha \gamma)}, 
\end{align*}
Hence
\begin{align}\label{eq:45}
\operatorname{Re}\left(sC_\ell(s)\right)
\ge \rho_{\ell}R_0^2
    \frac{\gamma}{1 + R_0\gamma/c_\ell} \ge \rho^*_{\ell}R_0^2
    \frac{\gamma}{1 + R_0\gamma/c^*_\ell} =: \widetilde m_\gamma.
\end{align}
Furthermore, it is known that 
\begin{align}\label{eq:46}
\operatorname{Re}\left(sC_g(s)\right)\ge0,
    \qquad
    \operatorname{Re}s>0.
\end{align}
Furthermore, it can be seen that there exists $\gamma^*$ such that $\gamma \ge \gamma_*$ 
\begin{align*}
\left|\frac{2\sigma}{R_0 s}\right| \le \frac 1 2 \widetilde m_\gamma.
\end{align*}
Combining this with \eqref{eq:45} and \eqref{eq:46} gives 
\begin{align}
 \operatorname{Re} (M(s)) \ge \frac{1}2 \widetilde m_\gamma, \quad \mathrm{for}\; s \in \{z \in \mathbb C: \mathrm{Re}(s) = \gamma \}\; \mathrm{with}\; \gamma > \gamma^*, \notag
\end{align}
which implies \eqref{eq:estimate} holds.

    
\end{proof}

\begin{proof}[Proof of \eqref{eq:30}]
It is known that 
\begin{align*}
\widehat v_\ell^{\,f}(r,s)
= \frac1{2c_\ell s}
    \int_{R_0}^{+\infty}
    \left(
        e^{-\lambda_\ell|r-y|}
        -
        e^{-\lambda_\ell(r+y-2R_0)}
    \right)
    \left(
        s V_\ell^{[0]}(y) + V_\ell^{[1]}(y)
    \right)\,dy, \quad s = c_\ell\lambda_\ell.
\end{align*}
Taking the $r$-derivative at $r = R_0$, we obtain
\begin{align*}
    \partial_r\widehat v_\ell^{\,f}(R_0,s)
    =
    \frac1{c_\ell^2}
    \int_{R_0}^{+\infty}
    e^{-\lambda_\ell (y-R_0)}
    \left(
        s V_\ell^{[0]}(y) + V_\ell^{[1]}(y)
    \right)\,dy .
\end{align*}
Integration by parts gives
\begin{align*}
    \partial_r\widehat v_\ell^{\,f}(R_0,s)
    &=
    \frac{V_\ell^{[0]}(R_0)}{c_\ell}
    +
    \frac1{c_\ell}
    \int_{R_0}^{+\infty} e^{-\lambda_\ell (y-R_0)}V_\ell^{'[0]}(y)\,dy
    +
    \frac1{c_\ell^2}
    \int_{R_0}^{+\infty} e^{-\lambda_\ell (y-R_0)} V_\ell^{[1]}(y)\,dy .
\end{align*}
Therefore
\begin{align*}
C_\ell(s)\widehat H_\ell(s)
=
\frac{\rho_\ell R_0^2}{1 + sR_0 / c_\ell}
\frac1{\rho_\ell R_0}
\partial_r\widehat v_\ell^{\,f}(s,0) =
\frac{R_0}{1 + \lambda_\ell R_0}
\partial_r\widehat v_\ell^{\,f}(s,0).
\end{align*}
We now estimate the three terms separately. First,
\begin{align*}
    \left\|
        \frac{R_0}{1 + \lambda_\ell R_0}
        \frac{V_\ell^{[0]}(R_0)}{c_\ell}
    \right\|_{L_\xi^2(\mathbb R)}
    &\le
    C_{\gamma, R_0} c_\ell^{-1/2}\left|V_\ell^{[0]}(R_0)\right|.
\end{align*}
By the one-dimensional trace theorem,
\begin{align*}
\left|V_\ell^{[0]}(R_0)\right| \le C \left\|V_\ell^{[0]}\right\|_{H^1(R_0,\infty)}.
\end{align*}
Next, for any $f\in L^2(R_0,\infty)$, the change of variables
$\eta=\xi/c_\ell$ and Plancherel's theorem imply
\begin{align*}
    \left\|
        \int_{R_0}^{+\infty} e^{-(\gamma+i\xi)y/c_\ell}f(y)\,dy
    \right\|_{L_\xi^2(\mathbb R)}
    \le
    C_{\gamma} c_\ell^{1/2}\left\|f\right\|_{L^2(R_0,\infty)}.
\end{align*}
Hence
\begin{align*}
    \left\|
    \frac{R_0}{1+\lambda_\ell R_0}
        \frac1{c_\ell}
        \int_{R_0}^{+\infty} e^{-\lambda_\ell y}V_\ell^{'[0]}(y)\,dy
    \right\|_{L_\xi^2(\mathbb R)}
    &\le
    C_{\gamma, R_0} c_\ell^{-1/2}\left\|V_\ell^{'[0]}\right\|_{L^2(R_0,\infty)},
\end{align*}
and
\begin{align*}
    \left\|
    \frac{R_0}{1+\lambda_\ell R_0}
        \frac1{c_\ell^2}
        \int_{R_0}^{+\infty} e^{-\lambda_\ell y}V_\ell^{[1]}(y)\,dy
    \right\|_{L_\xi^2(\mathbb R)}
    &\le
    C_{\gamma, R_0} c_\ell^{-3/2}\left\|V_\ell^{[1]}\right\|_{L^2(R_0,\infty)}.
\end{align*}
Combining these estimates gives \eqref{eq:30}.
\end{proof}

\begin{proof}[Proof of \eqref{eq:31}]
Define
\begin{align*}
 \widehat w_g(r,s)
    := \widehat v_g^{\,f}(r,s)- C_g(s)\widehat H_g(s) \frac{\sinh(s r/c_g)}{\sinh(s R_0/c_g)}, \quad  0 \le r \le R_0.
\end{align*}
By a straightforward calculation, we have 
\begin{align} \label{eq:42}
\begin{cases}
    s^2\widehat w_g-c_g^2\partial_{rr}\widehat w_g
    =
    sV_g^{[0]}+V_g^{[1]},
        &0<r<R_0,\\
    \widehat w_g(0,s)=0,\\
    R_0\partial_r\widehat w_g(R_0,s)-\widehat w_g(R_0,s)=0.
\end{cases}
\end{align}
Since 
\begin{align} \label{eq:18}
\widehat w_g(R_0,s) = -C_g(s)\widehat H_g(s),
\end{align}
in what follows, we derive estimates for $\widehat w_g(R_0,s)$ from system \eqref{eq:42}.

Let $V:=\{u\in H^1(0,R_0):u(0)=0\}$,
and define
\begin{align}\label{eq:14}
a_{R_0}(u,v) :=
    \int_0^{R_0} u'(r)\overline{v'(r)}\,dr
    -
    \frac1 {R_0} u(R_0)\overline{v(R_0)},
    \qquad u,v\in V.
\end{align}
Let $A_{R_0}$ be the self-adjoint operator in $L^2(0,R_0)$ associated with
$a_{R_0}$.  Since 
\begin{align*}
|u(R_0)|^2 \le
R_0 \int_0^{R_0} |u'(r)|^2\,dr.
\end{align*}
Hence
\begin{align*}
 a_{R_0}(u,u)
 =
\int_0^{R_0} |u'(r)|^2\,dr
    -
    \frac1{R_0}|u(R_0)|^2
    \ge 0.
\end{align*}
Equality holds if and only if $u'$ is constant, and since $u(0)=0$,
this means
\begin{align*}
\ker A_{R_0}= \operatorname{span}\{r\}.
\end{align*}
Let $\phi_0$ be the $L^2(0,R)$-normalized function proportional to $r$. We decompose
\begin{align*}
    \widehat w_g(R_0,s)
    =
    \widehat w^0_g(R_0,s)\phi_0 + \widehat w_g^\perp(R_0,s),
\end{align*}
where $\widehat w_g^\perp(R_0,s)\perp \phi_0 $ in $L^2(0,R)$. Similarly, write
\begin{align*}
V_g^{[0]} = g_0\phi_0+g_\perp, \qquad V_g^{[1]}=\widetilde g_0\phi_0 + \widetilde g_\perp.
\end{align*}

The zero mode satisfies
\begin{align*}
    s^2\widehat w^0_g(R_0,s) = s g_0 + \widetilde g_0.
\end{align*}
Since \(s=\gamma+i\xi\) and \(\gamma>0\),
\begin{align*}
    \left\|
        \widehat w^0_g(R_0,\gamma+i\xi)
    \right\|_{L^2_\xi}
    \le C_\gamma
    \left( |g_0|+|\widetilde g_0| \right).
\end{align*}
Moreover,
\begin{align*}
    |g_0|\le \left\|V_g^{[1]}\right\|_{L^2(0,R_0)}, \quad    |\widetilde g_0|
    \le \left\|V_g^{[0]}\right\|_{L^2(0,R_0)}
    \le C_{R_0}\left\|V_g^{'[0]}\right\|_{L^2(0,R_0)}.
\end{align*}
Here, the last inequality follows from the condition $p_{g,0}(0)=0$ together with the  Poincar\'e's inequality.
Therefore,
\begin{align} \label{eq:36}
    \left\|
        \widehat w^0_g(R_0,\gamma+i\xi)
    \right\|_{L^2_\xi}
    \le
    C_{\gamma,R_0}
    \left(
        \left\|V_g^{'[0]}\right\|_{L^2(0,R_0)}
        +
        \left\|V_g^{[1]}\right\|_{L^2(0,R_0)}
    \right).
\end{align}

It remains to estimate the orthogonal component. Since $a_{R_0}$, given by \eqref{eq:14}, is
nonnegative and its kernel is $\operatorname{span}\{\phi_0\}$, the form
is coercive on the orthogonal complement of this kernel. Equivalently,
there exists $c_{R_0}>0$ such that
\begin{align*}
    a_{R_0}(u,u)\ge c_{R_0}\|u\|_{H^1(0,R_0)}^2,
    \qquad
    u\in V,\quad u\perp\phi_0.
\end{align*}
Let $\{\phi_n\}_{n\ge1}$ be an $L^2(0,R_0)$-orthonormal basis of
eigenfunctions of $A_{R_0}$ on this orthogonal complement, with positive
eigenvalues $\mu_n>0$. Then
\begin{align*}
A_R\phi_n=\mu_n\phi_n,
\qquad n\ge1.
\end{align*}
Write
\begin{align*}
&  g_\perp=\sum_{n\ge1}g_n\phi_n,
    \qquad \widetilde g_\perp=\sum_{n\ge1}\widetilde g_n\phi_n,\qquad
\widehat w_g^\perp(R_0,s)=\sum_{n\ge1}\widehat w_n(s)\phi_n.
\end{align*}
It follows from \eqref{eq:42} that 
\begin{align} \label{eq:11}
    \left(s^2+c_g^2\mu_n\right)\widehat w_n(s)
    =
    s g_n + \widetilde g_n.
\end{align}
For each fixed $s$, the trace estimate and the coercivity on the
orthogonal complement imply
\begin{align*}
    \left|\widehat w_g^\perp(R_0,s)\right|^2
    \le C_{R_0} a_{R_0}\left(\widehat w_g^\perp(\cdot,s), \widehat w_g^\perp(\cdot,s)\right)=
    C_{R_0}
    \sum_{n\ge1}
    \mu_n|\widehat w_n(s)|^2.
\end{align*}
This, together with \eqref{eq:11} yields
\begin{align*}
    \left|\widehat w_g^\perp(R_0,s)\right|^2
    &\le
    {R_0}
    \sum_{n\ge1}
    \mu_n
    \frac{|s g_n + \widetilde g_n|^2}
    {|s^2+c_g^2\mu_n|^2}  \le 
    {R_0}
    \sum_{n\ge1}
    \mu_n
    \frac{|s|^2|g_n|^2+|\widetilde g_n|^2}
    {|s^2+c_g^2\mu_n|^2}.
\end{align*}
Using
\begin{align*}
    \mathcal L(\cos(c_g\sqrt{\mu}\,t))(s)
    =
    \frac{s}{s^2+c_g^2\mu},
\end{align*}
and applying the Laplace--Plancherel
identity, we obtain 
\begin{align*}
     \int_{\mathbb R}
    \frac{|s|^2}
    {|s^2+c_g^2\mu|^2}
    \,d\xi
    \le
    C_\gamma,
    \qquad
    \mu>0,
\end{align*}
Similarly, since 
\begin{align*}
    \mathcal L\left(
        \frac{\sin(c_g\sqrt{\mu}\,t)}{c_g\sqrt{\mu}}
    \right)(s)
    =
    \frac{1}{s^2+c_g^2\mu},
\end{align*}
we obtain
\begin{align*}
    \int_{\mathbb R}
    \frac{\mu}
    {|s^2+c_g^2\mu|^2}
    \,d\xi
    \le
    \frac{C_{\gamma}}{c_g^2},
    \qquad
    \mu>0.
\end{align*}
Hence
\begin{align} \label{eq:50}
    \|\widehat w_g^\perp(R_0,\gamma+i\xi)\|_{L^2_\xi(\mathbb R)}^2
    &\le
    C_{\gamma}\left(1 + \frac{1} {c^2_{g}}\right)
    \left( \sum_{n\ge1}\mu_n|g_n|^2 +
    \sum_{n\ge1}|\widetilde g_n|^2
    \right).
\end{align}
Furthermore, by the spectral representation of the form $a_{R_0}$, we have  
\begin{align*}
    &\sum_{n\ge1}\mu_n|g_n|^2
    =
    a_{R_0}(g_\perp,g_\perp)
    \le
    C_{R_0}\left\|V_g^{'[0]}\right\|_{L^2(0,R_0)}^2,\\
   & \sum_{n\ge1}|\widetilde g_n|^2
    \le
    \left\|V_g^{[1]}\right\|_{L^2(0,R_0)}^2.
\end{align*}
In conjunction with \eqref{eq:50}, we arrive at 
\begin{align}\label{eq:43}
    \|\widehat w_g^\perp(R_0\gamma+i\xi)\|_{L^2_\xi(\mathbb R)}
    \le
     C_{\gamma, R_0}\left(1 + \frac{1} {c_{g}}\right)
    \left(
        \left\|V_g^{'[0]}\right\|_{L^2(0,R_0)}
        +
       \left\|V_g^{[1]}\right\|_{L^2(0,R_0)}
    \right).
\end{align}

Combining \eqref{eq:18}, \eqref{eq:36} and \eqref{eq:43} yields \eqref{eq:31}.

\end{proof}

\section{Proofs of Lemmas \ref{le:2}, \ref{le:3} and \ref{lem:lifting-jump}} \label{sec:B}
\begin{proof}[Proof of Lemma \ref{le:2}]
Since $p_\alpha$ satisfies the homogeneous wave equation with wave speed
$c_\alpha$ in $\Omega^{(0)}_\alpha$, $\alpha \in \{\ell,g\}$, applying Green's formula in each phase, we readily obtain \eqref{eq:1-LP} and \eqref{eq:2-LP}.
\end{proof}

\begin{proof}[Proof of Lemma \ref{le:3}]
Let 
\begin{align*}
M_{\alpha}:=\operatorname{sup}_{t\in(0,T)}\left[\left(I^{\rm{in}}_\alpha h_\alpha\right)^{(m,l)}(t/c_\alpha)/{c_\alpha}\right],
\end{align*}
which is compact since $\left(I_\alpha h_\alpha\right)^{(m,l)}(t/c_\alpha)/{c_\alpha}$ is compactly
supported. Then, for each $t\in [0,T]$, we have 
\begin{align*}
&\left|\int_0^T
\left(I^{\rm{in}}_\alpha h_\alpha\right)^{(m,l)}(t)\psi(t)\,dt \right| = \left| \int^{Tc_\alpha}_0 \frac{1}{c_\alpha}
\left(I^{\rm{in}}_\alpha h_\alpha\right)^{(m,l)}(t/c_\alpha)\psi(t/c_\alpha)\,dt\right|\\
& \qquad\qquad\qquad\qquad
\qquad\le \int^{M_\alpha}_0 \left|\int_{\mathbb S^2}\frac{1}{4\pi t}
\int_{\Omega_{\alpha}\cap \{|R_0\hat x-y|= t\}} h_\alpha(y)\,dS(y) \overline{Y^m_\ell(\hat x)} dS(\hat x)\right| \left|\psi(t/c_\alpha)\right|dt\\
&\qquad \qquad\qquad\qquad
\qquad\le C_{T,h_\alpha} \|\psi\|_{L^{\infty}[0,T]}.
\end{align*}
This directly yields the desired estimate.
\end{proof}

\begin{proof}[Proof of Lemma \ref{lem:lifting-jump}]
This proof relies on the standard transmission lifting, which can be constructed from the layer-potential method. More precisely, one combines a double-layer potential, which
realizes the prescribed trace jump, with a single-layer potential whose density
is chosen by the associated two-phase boundary integral equation so that the
weighted conormal jump vanishes. The required jump relations and Sobolev
mapping properties of the single- and double-layer potentials, together with
the corresponding boundary operators, are classical; see
\cite[Theorems~6.11--6.12 and Theorem~7.1]{McLean2000}.
\end{proof}

\end{appendices}

\end{document}